\documentclass [a4paper]{article}
\usepackage[cp1251]{inputenc}
\usepackage{amsmath,amssymb,amsthm}
\usepackage{cite}
 \begin{document}

\title{Determinantal representations \\of the  weighted core-EP,  DMP, MPD, and  CMP inverses of matrices with quaternion and complex elements.}

\author {{\textbf{Ivan I. Kyrchei}} \thanks{Pidstrygach Institute for Applied Problems of Mechanics and Mathematics, NAS of Ukraine, Lviv,  Ukraine, E-mail address: kyrchei@online.ua}
}

\date{}
\maketitle
\begin{abstract}  In this paper, we extend notions of the weighted core-EP right and left inverses, the weighted DMP and MPD inverses, and the CMP inverse to matrices over the quaternion skew field  ${\mathbb{H}}$ that have some features in comparison to  these inverses over the complex field. We give the direct methods of their computing, namely,  their determinantal representations by using noncommutative column and row  determinants previously introduced by the author. As the special cases, by using the usual determinant, we give their determinantal representations for  matrices with complex entries as well. A numerical example to illustrate the main result is given.
\end{abstract} 

\textbf{AMS Classification}: 15A09; 15A15; 15B33

\textbf{Keywords:}
Weighted core-EP inverse, weighted DMP inverse, weighted MPD inverse, weighted CMP inverse, generalized inverse, Moore-Penrose inverse,  weighted Drazin inverse, quaternion matrix, noncommutative determinant.

\section{Introduction}
\newtheorem{cor}{Corollary}[section]
\newtheorem{thm}{Theorem}[section]
\newtheorem{lem}{Lemma}[section]
\theoremstyle{definition}
 \newtheorem{defn}[thm]{Definition}
 \theoremstyle{remark}
 \newtheorem{rem}[thm]{Remark}
\newcommand{\rk}{\mathop{\rm rk}\nolimits}
\newcommand{\Ind}{\mathop{\rm Ind}\nolimits}

 \newtheorem{alg}[thm]{Algorythm}
 \numberwithin{equation}{section}

\newcommand{\corer}[1]{#1^{\tiny{\textcircled{\tiny\#}}}}
\newcommand{\corel}[1]{#1_{\tiny{\textcircled{\tiny\#}}}}
\newcommand{\coepr}[1]{#1^{\tiny{\textcircled{\tiny\dag}}}}
\newcommand{\coepl}[1]{#1_{\tiny{\textcircled{\tiny\dag}}}}

In the whole article, the notations ${\mathbb R}$ and $
{\mathbb{C}}$ are reserved for fields of the real and complex numbers, respectively.
$\mathbb{H}^{m\times n}$ stands for the set of all $m\times n$ matrices over the quaternion skew field
$$\mathbb{H}=\{h_0+h_1\mathbf{i}+h_2\mathbf{j}+h_3\mathbf{k}\mid \mathbf{i}^2=\mathbf{j}^2=\mathbf{k}^2=\mathbf{ijk}=-1,h_0,h_1,h_2,h_3\in\mathbb{R}\}.$$
$\mathbb{H}^{m\times n}_r$ determines its subset of matrices   with a rank $r$.
 For given
$h=h_{0}+h_{1}\mathbf{i}+h_{2}\mathbf{j}+h_{3}\mathbf{k}\in \mathbb{H}$, the conjugate of $h$ is  $\overline{h}=a_{0}-h_{1}\mathbf{i}-h_{2}\mathbf{j}-h_{3}\mathbf{k}$.
  For $ {\bf A}  \in
{\mathbb{H}}^{m\times n}$, the symbols  $ {\bf A}^{ *}$ and $\rk ({\bf A})$  specify  the conjugate transpose   and the  rank
of  $ {\bf A}$, respectively.
A matrix $ {\bf A}  \in
{\mathbb{H}}^{n\times n}$ is Hermitian if ${\rm {\bf
A}}^{ *}  = {\bf A}$.
The index of $ {\bf A}  \in
{\mathbb{H}}^{n\times n}$, denoted  $\Ind{\bf A}= k $, is the smallest positive number such that $\rk ({\bf A}^{k+1})=\rk ({\bf A}^{k})$.

Due to \cite{cl} the definition of the weighted Drazin inverse can be generalized over ${\mathbb{H}}$ as follows.
\begin{defn}\label{def:wdr}
For ${\bf A}\in{\rm {\mathbb{H}}}^{m\times n}$ and ${\bf W}\in{\rm {\mathbb{H}}}^{n\times m}$, the W-weighted Drazin inverse  of ${\bf A}$ with respect to ${\bf W}$, denoted by ${\bf A}^{d,W}$, is the unique solution to equations,
\begin{align*}
  ({{\bf A}{\bf W}})^{k+1}{\rm
{\bf X}}{\bf W}=({\rm {\bf
         A}}{\bf W})^{k},~
 {\bf X}{\bf W}{\bf A}{\bf W} {\bf X}= {\bf X},~
   {\bf A}{\bf W} {\bf X}= {\bf X}{\bf W} {\bf A},
\end{align*}
where $k= \Ind({\bf A}{\bf W})$.
\end{defn}
The properties of the complex W-weighted Drazin inverse can be found in \cite{sta,wei1,wei2,zhour}.
These properties can be generalized to ${\mathbb{H}}$.
Among them, if ${\bf A}\in {\mathbb{H}}^{m\times n}$ with respect to ${\bf W}\in {\mathbb{H}}^{n\times m}$ and $k= {\max}\{\Ind({\bf A}{\bf W}), \Ind({\bf W}{\bf A})\}$, then
   \begin{align}\label{eq:WDr_dr}
{\bf A}_{d,{\bf W}}=&{\bf A}\left(({\bf W}{\bf A})^{d} \right)^{2}=\left(({\bf A}{\bf W})^{d} \right)^{2}{\bf A}.
\end{align}

Let ${\bf A}\in {\mathbb{H}}^{n\times n}$ and ${\bf W}={\bf I}_n$ be the identity matrix of order $n$. Then ${\bf X}= {\bf A}^d$ is the Drazin
inverse of ${\bf A}$.
In particular, if $\Ind {\bf A}=1$,
then the matrix ${\bf X}$  is called
the group inverse and it is denoted by $ {\bf X}= {\bf
A}^{\#}$.

Using the Penrose equations \cite{pen}, the Moore-Penrose inverse of a quaternion matrix  can be defined as well (see, e.g.\cite{kyr}).
\begin{defn}\label{def:mp}
The Moore-Penrose inverse
 of $ {\bf A}
 \in  {\mathbb{H}}^{n\times m}$ is called the exclusive matrix ${\bf X}$, denoted by ${\bf A}^{\dagger}$,  satisfying the following four equations
\begin{align*}  {\bf A}{\bf X}
{\bf A} =  {\bf A},~ {\bf X}  {\bf
A}{\bf X}  = {\bf X},~ \left( {\bf A}{\bf X} \right)^{ *}  =
{\bf A}{\bf X},~ \left( {{\bf X} {\bf A}} \right)^{ *}  ={\bf X} {\bf A}.\end{align*}
\end{defn}

 ${\bf P}_A:= {\bf A}{\bf A}^{\dag}$ and ${\bf Q}_A:= {\bf A}^{\dag}{\bf A}$ are the orthogonal projectors onto the range of ${\bf A}$ and the range of ${\bf A}^*$, respectively.
For $ {\bf A} \in {\rm {\mathbb{C}}}^{n\times m}$, the symbols  $\mathcal{N} ({\bf A})$, and $\mathcal{R} ({\bf A})$ will denote  the kernel and the range space of ${\bf A}$, respectively.

The core inverse was introduced by Baksalary and Trenkler in \cite{baks}. Later, it was investigated by S. Malik in \cite{mal1} and S.Z. Xu et al. in \cite{xu},  among others.
\begin{defn}\cite{baks}\label{def:cor}
A matrix ${\bf X}\in  {\mathbb{C}}^{n\times n}$ is called the core inverse of ${\bf A} \in {\mathbb{C}}^{n\times n}$
if it satisfies the conditions
$$
{\bf A}{\bf X}={\bf P}_A,~and~ \mathcal{R}({\bf X})=\mathcal{R}({\bf A}).$$
When such matrix ${\bf X}$ exists, it is denoted $ \corer{\bf A}$.
\end{defn}
In 2014, the core inverse
was extended to the core-EP inverse defined by K. Manjunatha Prasad and  K.S. Mohana
\cite{pras}.
Determinantal formulas for the core EP generalized inverse in complex matrices has been derived in \cite{pras} based on the determinantal representation of an  reflexive inverse obtained in \cite{bap,rao}.

Other generalizations of the core inverse were recently introduced for $n\times n$ complex matrices,
namely BT inverses \cite{baks1},  DMP inverses \cite{mal1},  and CMP inverses \cite{meh}, etc.
The characterizations, computing methods, some applications of the core inverse and its generalizations were  investigated in complex matrices and rings (see, e.g.
\cite{chen,fer1,fer2,gao,gut,liu,ma,miel,mos,pras1,rak,wang}).

Only recently generalizations of the core inverse were extended to rectangular matrices by using the weighted Drazin inverse. Among them,   the W-weighted core-EP  inverse in complex matrices was introduced  in \cite{fer2}, its representations and properties were studied  in \cite{gao2}, and generalizations of the weighted core-EP inverse were expanded over  a ring with involution \cite{mos} and  Hilbert space \cite{mos1}, respectively. The concepts of the complex weighted DMP and CMP inverses were introduced in \cite{meng} and \cite{mos1}, respectively.

The main goals of this paper are extended the notions of the weighted core-EP inverses, and the  weighted  DMP  and CMP inverses over the quaternion skew-field ${\mathbb{H}}$, and get their determinantal representations  that are the direct methods of their obtaining by using determinants.

The determinantal representation of the usual inverse is the matrix with cofactors
in entries that suggests a direct method of finding the inverse of a matrix. The same
is desirable for the generalized inverses. But, there are various expressions of determinantal representations of generalized inverses even for matrices with complex or
real entries,   (see, e.g.  \cite{bap,rao,sta1,sta2,ky,kyr1,kyr_nov}).
In view of the noncommutativity of quaternions, the problem of the determinantal representation of quaternion generalized inverses is evidently dependent on complexities related with definition of  the  determinant  with noncommutative entries (it is also called  a noncommutative  determinant).

The majorities of the previous defined   noncommutative determinants  are derived by transforming the quaternion matrix to an equivalent complex or real matrix (see, e.g.\cite{as,coh}). However, by this way it is impossible for us to give determinantal representations of quaternionic generalized inverses.   Only now it can be done thanks to the theory of column-row determinants introduced by the author in \cite{kyr2,kyr3}.
Currently, by using of row-column determinants, determinantal representations of various generalized inverses  have been derived and applied to solutions of quaternion matrix equations by the author (see, e.g.\cite{kyr,kyr4,kyr5,kyr6,kyr7,kyr9,kyr10,kyr11,kyr12,kyr13,kyr14}), (among them the core inverse and its generalizations  in the quaternion \cite{ky_cor} and complex \cite{ky_cor1} cases), and by other researchers (see, e.g.\cite{song1,song2,song5}).

The paper is organized as follows.  In Section 2, we start with preliminary  introduction of the theory of row-column determinants and the  determinantal representations of the Moore-Penrose inverse, of the Drazin and weighted  Drazin  inverses, and of the core inverse and its generalizations over the quaternion skew field  previously obtained by using  row-column determinants.  In Section 3, we introduce the concepts of  the left and right weighted core-EP inverses over the quaternion skew field and  give their determinantal representations. In Section 4, the quaternion  weighted DMP and MPD  inverses   are established and their determinantal representations  are obtained.  Determinantal representations of  the quaternion CMP inverse are get in Section 5.
A numerical example to illustrate the main results is considered in   Section 6. Finally, in Section 7, the conclusions are drawn.

\section{Preliminaries.}

\subsection{Elements of the theory of  row-column determinants.}
Suppose $S_{n}$ is the symmetric group on the set $I_{n}=\{1,\ldots,n\}$.

\begin{defn}\cite{kyr2}
\emph{The $i$th row determinant} of ${\rm {\bf A}}=(a_{ij}) \in
{\mathbb{H}}^{n\times n}$ is defined  for any $i \in I_{n} $
by setting
 \begin{align*}{\rm{rdet}}_{ i} {\rm {\bf A}} =&
\sum\limits_{\sigma \in S_{n}} \left( { - 1} \right)^{n - r}({a_{i{\kern
1pt} i_{k_{1}}} } {a_{i_{k_{1}}   i_{k_{1} + 1}}} \ldots   {a_{i_{k_{1}
+ l_{1}}
 i}})  \ldots  ({a_{i_{k_{r}}  i_{k_{r} + 1}}}
\ldots  {a_{i_{k_{r} + l_{r}}  i_{k_{r}} }}),\\
\sigma =& \left(
{i\,i_{k_{1}}  i_{k_{1} + 1} \ldots i_{k_{1} + l_{1}} } \right)\left(
{i_{k_{2}}  i_{k_{2} + 1} \ldots i_{k_{2} + l_{2}} } \right)\ldots \left(
{i_{k_{r}}  i_{k_{r} + 1} \ldots i_{k_{r} + l_{r}} } \right),\end{align*}
where $\sigma$ is the left-ordered  permutation. It means that its first cycle from the left  starts with $i$, other cycles  start from the left  with the minimal of all the integers which are contained in it,
$$i_{k_{t}}  <
i_{k_{t} + s}~~ \text{for all}~~ t = 2,\ldots,r,~~~s =1,\ldots,l_{t}, $$
and  the order of disjoint cycles (except for the first one)  is strictly conditioned by increase from left to right of their first elements, $i_{k_{2}} < i_{k_{3}}  < \cdots < i_{k_{r}}$.
\end{defn}
Similarly, for a column determinant along an arbitrary column, we have the following definition.
\begin{defn}\cite{kyr2}
\emph{The $j$th column determinant}
 of ${\rm {\bf
A}}=(a_{ij}) \in
{\mathbb{H}}^{n\times n}$ is defined for
any $j \in I_{n} $ by setting
 \begin{align*}{\rm{cdet}} _{{j}}  {\bf A} =&
\sum\limits_{\tau \in S_{n}} ( - 1)^{n - r}(a_{j_{k_{r}}
j_{k_{r} + l_{r}} } \ldots a_{j_{k_{r} + 1} j_{k_{r}} })  \ldots  (a_{j
j_{k_{1} + l_{1}} }  \ldots  a_{ j_{k_{1} + 1} j_{k_{1}} }a_{j_{k_{1}}
j}),\\
\tau =&
\left( {j_{k_{r} + l_{r}}  \ldots j_{k_{r} + 1} j_{k_{r}} } \right)\ldots
\left( {j_{k_{2} + l_{2}}  \ldots j_{k_{2} + 1} j_{k_{2}} } \right){\kern
1pt} \left( {j_{k_{1} + l_{1}}  \ldots j_{k_{1} + 1} j_{k_{1} } j}
\right), \end{align*}
\noindent where $\tau$ is the right-ordered  permutation. It means that its first cycle from the right  starts with $j$, other cycles  start from the right  with the minimal of all the integers which are contained in it,
$$j_{k_{t}}  < j_{k_{t} + s} ~~ \text{for all}~~ t = 2,\ldots,r,~~~s =1,\ldots,l_{t}, $$
and the order of disjoint cycles (except for the first one) is strictly conditioned by increase from right to left of their first elements,
 $j_{k_{2}}  < j_{k_{3}}  < \cdots <
j_{k_{r}} $.
\end{defn}
The row and column determinants have the following linear properties.
\begin{lem}\cite{kyr2}\label{lem:left_rdet} If the $i$th row of
 $ {\bf A}\in {\mathbb{H}}^{n\times n}$ is a left linear combination
of  some row vectors, i.e. $ {\bf a}_{i.} = \alpha_{1} {\bf b}_{1 } + \cdots + \alpha_{k}  {\bf b}_{k }$, where $
\alpha_{l} \in { {\mathbb{H}}}$ and ${\bf b}_{l }\in {\mathbb{H}}^{1\times n}$ for all $ l = {1,\ldots, k}$ and $ i = {1,\ldots, n}$, then
\[
 {\rm{rdet}}_{i}\,  {\bf A}_{i  .} \left(
\alpha_{1}  {\bf b}_{1 } + \cdots + \alpha_{k}
{\bf b}_{k }  \right)=\sum_l  \alpha_{l}{\rm{rdet}}_{i}\, {\bf A}_{i  .} \left(
  {\bf b}_{l } \right).
\]
\end{lem}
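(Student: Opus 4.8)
The plan is to argue directly from the definition of ${\rm rdet}_i$ as a signed sum of monomials indexed by the left-ordered permutations $\sigma \in S_n$, exploiting the single structural feature that makes left-linearity possible: in every such monomial the unique factor drawn from the $i$th row sits at the extreme left. First I would fix notation, writing $\mathbf{b}_l = (b_{l1}, \ldots, b_{ln})$ for each $l$, so that the $(i,j)$ entry of $\mathbf{A}_{i.}\bigl(\sum_l \alpha_l \mathbf{b}_l\bigr)$ equals $\sum_l \alpha_l b_{lj}$, while every entry off the $i$th row is left untouched. Crucially, the permutation $\sigma$ — and hence its cycle count $r$ and the sign $(-1)^{n-r}$ — is dictated by the index bookkeeping alone and is unaffected by which numbers occupy the entries; so the same family of $\sigma$ governs ${\rm rdet}_i\,\mathbf{A}_{i.}\bigl(\sum_l \alpha_l \mathbf{b}_l\bigr)$ and each ${\rm rdet}_i\,\mathbf{A}_{i.}(\mathbf{b}_l)$.

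Next I would record the key observation. Since $\sigma$ is left-ordered, its first cycle begins with the index $i$, and the corresponding monomial opens with the factor $a_{i\,i_{k_1}}$. This is the only factor in the whole product whose first (row) index equals $i$: across the $n$ factors each row index occurs exactly once as a first index, so every other factor $a_{pq}$ has $p \neq i$. Consequently, replacing the $i$th row by $\sum_l \alpha_l \mathbf{b}_l$ alters each monomial in exactly one place — the leftmost factor — turning $a_{i\,i_{k_1}}$ into $\sum_l \alpha_l b_{l\,i_{k_1}}$.

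I would then invoke the left-distributivity of quaternion multiplication. Because the altered factor is leftmost, each monomial splits as
$$
(-1)^{n-r}\Bigl(\sum_l \alpha_l b_{l\,i_{k_1}}\Bigr) a_{i_{k_1} i_{k_1+1}} \cdots = \sum_l \alpha_l \Bigl[ (-1)^{n-r} b_{l\,i_{k_1}} a_{i_{k_1} i_{k_1+1}} \cdots \Bigr],
$$
and the scalar $\alpha_l$ may be pulled clear of the \emph{entire} product precisely because nothing stands to its left. Summing over all $\sigma \in S_n$ and interchanging the two finite sums, the bracketed quantity summed over $\sigma$ is by definition ${\rm rdet}_i\,\mathbf{A}_{i.}(\mathbf{b}_l)$, which yields the claimed identity.

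The argument is essentially careful bookkeeping, and the only point demanding genuine care — the step I expect to be the crux — is justifying that the row-$i$ factor is leftmost in every monomial and that, in extracting $\alpha_l$, no reordering of the remaining noncommutative factors is needed. This rests entirely on the left-ordering convention built into the definition of ${\rm rdet}_i$; once that is pinned down, left-distributivity does the rest, and right-sided scalars never intrude. I would also remark that the analogous identity for column determinants fails on the left but holds on the \emph{right}, by the mirror-image argument using the right-ordered convention, which explains why the row and column determinants are the natural tools for left- and right-handed linear combinations respectively.
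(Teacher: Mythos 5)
Your proof is correct and is essentially the same argument as in the cited source \cite{kyr2} (the paper itself only quotes this lemma): by the left-ordering convention, the unique factor from the $i$th row is the leftmost factor of every monomial of ${\rm rdet}_i$, the sign $(-1)^{n-r}$ is real and hence central, and left-distributivity then lets $\alpha_l$ be extracted from each monomial without reordering any noncommutative factors. Your closing remark about the mirror-image right-linearity of ${\rm cdet}_j$ is likewise exactly the content of Lemma~\ref{lem:right_cdet}.
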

\begin{lem}\cite{kyr2}\label{lem:right_cdet} If the $j$th column of
 $ A\in {\mathbb{H}}^{m\times n}$ is a right linear combination
of  other column vectors, i.e. $ {\bf a}_{.j} =  {\bf c}_{1}\alpha_{1} + \cdots +  {\bf c}_{k } \alpha_{k}$, where $
\alpha_{l} \in { {\mathbb{H}}}$ and ${\bf c}_{l }\in {\mathbb{H}}^{n\times1 }$ for all $ l = {1,\ldots, k}$ and $ j = {1,\ldots, n}$, then
\[
 {\rm{cdet}}_{j}\,  {\bf A}_{.j} \left(
  {\bf c}_{1 }\alpha_{1} + \cdots +
{\bf c}_{k }  \alpha_{k} \right)=\sum_l {\rm{cdet}}_{j}\,  {\bf A}_{.j} \left(
  {\bf c}_{l} \right) \alpha_{l}.
\]
\end{lem}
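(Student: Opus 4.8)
The plan is to read off the result directly from the combinatorial structure of the column determinant, exactly as its row-determinant counterpart in Lemma~\ref{lem:left_rdet} is read off from the row case. The single fact that drives everything is positional: in \emph{every} summand of ${\rm{cdet}}_{j}{\bf A}$ the unique entry drawn from the $j$th column occupies the rightmost place in the monomial. Because the coefficients $\alpha_{l}$ will be attached precisely to that rightmost factor, they can be pulled clear of the entire product on the right without ever needing to commute past a noncommuting quaternion, which is what makes right-linearity (rather than two-sided linearity) the correct statement.

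First I would verify the positional claim from the definition. By the right-ordered convention, the first cycle counted from the right is the one containing the index $j$, written $\left( j_{k_{1}+l_{1}}\ldots j_{k_{1}+1}\,j_{k_{1}}\,j\right)$, and the factor block it contributes is $a_{j\,j_{k_{1}+l_{1}}}\ldots a_{j_{k_{1}+1}\,j_{k_{1}}}\,a_{j_{k_{1}}\,j}$. Its last factor is $a_{j_{k_{1}}\,j}$, whose column index is $j$; the degenerate case in which $j$ is a fixed point of $\tau$ simply contributes the block $a_{jj}$, again ending in an entry of column $j$. No other factor in any block carries $j$ as its second index, so each monomial contains exactly one entry of the $j$th column and it always sits at the extreme right.

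Next I would substitute. Writing ${\bf a}_{.j}={\bf c}_{1}\alpha_{1}+\cdots+{\bf c}_{k}\alpha_{k}$ entrywise, the rightmost factor of each monomial becomes $a_{j_{k_{1}}\,j}=\sum_{l}(c_{l})_{j_{k_{1}}}\alpha_{l}$. Applying right distributivity of quaternion multiplication to this last factor splits the monomial into a sum over $l$, and since each $\alpha_{l}$ stands alone at the end of the product the remaining factors form precisely the corresponding monomial of ${\rm{cdet}}_{j}{\bf A}_{.j}({\bf c}_{l})$ with $\alpha_{l}$ appended on the right. Summing over all $\tau\in S_{n}$ and collecting the common sign $(-1)^{n-r}$ then yields the claimed identity.

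The only real obstacle is the rigorous justification of the positional claim, i.e. that the right-ordered labelling of the cycles forces the column-$j$ entry into the rightmost slot in every term, including the fixed-point case. Once that is secured, the argument is purely a distributive-law computation: no cancellation, no reordering of noncommuting factors, and no appeal to properties of the determinant beyond its definition. This is exactly the mirror image of the proof of Lemma~\ref{lem:left_rdet}, where the entry of the $i$th row is forced into the leftmost slot and the scalars $\alpha_{l}$ factor out on the left.
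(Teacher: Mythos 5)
Your proof is correct: the positional observation---that in every monomial of ${\rm{cdet}}_{j}$ the unique factor drawn from the $j$th column is the rightmost one (including the fixed-point case $a_{jj}$), while all other blocks involve only column indices from cycles disjoint from $j$---is exactly what makes right-linearity hold, and the subsequent right-distributivity step pulls each $\alpha_{l}$ out of the monomial without commuting it past any quaternion factor. Note that the paper itself gives no proof of this lemma (it is quoted from \cite{kyr2}); your argument is essentially the standard one from that source, the precise mirror image of the left-linearity proof for ${\rm{rdet}}_{i}$ in Lemma \ref{lem:left_rdet}.
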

So,  an arbitrary $n\times n$ quaternion matrix inducts  $n$ row determinants and $n$ column determinants that are different in general. Only for a
    Hermitian matrix $ {\bf A}$, we have \cite{kyr2},
 $${\rm{rdet}} _{1}  {\bf A} = \cdots = {\rm{rdet}} _{n} {\bf
A} = {\rm{cdet}} _{1}  {\bf A} = \cdots = {\rm{cdet}} _{n}  {\bf
A} \in  {\mathbb{R}},$$ that enables to define \emph{the determinant
of a Hermitian matrix}  by setting
$\det {\bf A}: = {\rm{rdet}}_{{i}}\,
{\bf A} = {\rm{cdet}} _{{i}}\, {\bf A} $
 for all $i =1,\ldots,n$.
Its properties  have been completely studied  in \cite{kyr3}. In particular, from them it follows the definition of the \emph{determinantal rank} of a quaternion matrix $ {\bf A} $ as the largest possible size of  nonzero principal minors of its corresponding Hermitian matrices, i.e. $\rk{\bf A}=\rk({\bf A}^*{\bf A})=\rk({\bf A}{\bf A}^*)$.

\subsection{Determinantal representations of generalized inverses.}

Let $\alpha : = \left\{
{\alpha _{1},\ldots,\alpha _{k}} \right\} \subseteq {\left\{
{1,\ldots ,m} \right\}}$ and $\beta : = \left\{ {\beta _{1}
,\ldots ,\beta _{k}} \right\} \subseteq {\left\{ {1,\ldots ,n}
\right\}}$ be subsets with $1 \le k \le \min {\left\{
{m,n} \right\}}$.
By ${\bf A}_{\beta} ^{\alpha} $ denote a submatrix of $ {\bf A}
 \in  {\mathbb{H}}^{m\times n}$  with rows and columns  indexed by $\alpha$ and
 $\beta$, respectively. Then, $ {\bf A}_{\alpha} ^{\alpha} $ is a principal submatrix of $ {\bf A}$ with rows and columns
indexed by $\alpha$. Moreover, for Hermitian $ {\bf A}$,
$ |{\bf A}|_{\alpha} ^{\alpha} $ is the
corresponding principal minor of $\det  {\bf A}$.
Suppose that
  $$\textsl{L}_{ k,
n}: = {\left\{ {\alpha :\alpha = \left( {\alpha_{1},\ldots,\alpha_{k}} \right),\,\, 1 \le \alpha_{1} < \cdots< \alpha_{k} \le n} \right\}}$$ stands for the collection of strictly
increasing sequences of  $1 \leq k\leq n$ integers chosen from $\left\{
{1,\ldots,n} \right\}$.  For fixed $i \in \alpha $ and $j \in
\beta $, put $I_{r, m} {\left\{ {i} \right\}}: = {\left\{
{\alpha :\alpha \in L_{r, m}, i \in \alpha}  \right\}}$,
$J_{r, n} {\left\{ {j} \right\}}: = {\left\{ {\beta :\beta
\in L_{r, n}, j \in \beta}  \right\}}$.

 Denote by $ {\bf a}_{.\,j} $ and $ {\bf
a}_{i.} $  the $j$-th column and the $i$-th row of $ {\bf A}$.
Similarly, $ {\bf a}_{.j}^{*} $ and $ {\bf
a}_{i.}^{*} $ stand for the $j$-th column  and the $i$-th row of  $
{\bf A}^{*} $.
By  ${\bf A}_{i.} \left(  {\bf b} \right)$ and $ {\bf
A}_{.j} \left(  {\bf c} \right)$ we denote the matrices obtained from
$ {\bf A}$ by replacing its $i$-th row with the
row ${\bf b}$ and its $j$-th column with the column ${\bf
c}$, respectively.

\begin{thm} \cite{kyr4}\label{th:det_rep_mp}
If $ {\bf A} \in  {\mathbb{H}}_{r}^{m\times n} $, then
the Moore-Penrose inverse  ${\rm {\bf A}}^{ \dag} = \left( {a_{ij}^{
\dag} } \right) \in  {\mathbb{H}}_{}^{n\times m} $ possess the
determinantal representations
  \begin{align}
\label{eq:cdet_repr_A*A}
 a_{ij}^{ \dag}  =& {\frac{{{\sum\limits_{\beta
\in J_{r,n} {\left\{ {i} \right\}}} {{\rm{cdet}} _{i} \left(
{\left( { {\bf A}^{ *} {\bf A}} \right)_{. i}
\left( { {\bf a}_{\,.j}^{ *} }  \right)} \right)
 _{\beta} ^{\beta} } } }}{{{\sum\limits_{\beta \in
J_{r,n}} {{\left|  { {\bf A}^{ *}  {\bf A}}
\right| _{\beta} ^{\beta}  }}} }}} =\\=&\label{eq:rdet_repr_AA*}
{\frac{{{\sum\limits_{\alpha \in I_{r,m} {\left\{ {j} \right\}}}
{{\rm{rdet}} _{j} \left( {( {\bf A} {\bf A}^{ *}
)_{j.} ( {\bf a}_{\,i.}^{ *} )} \right)_{\alpha}
^{\alpha} } }}}{{{\sum\limits_{\alpha \in I_{r,m}}  {{
{\left| { {\bf A} {\bf A}^{ *} } \right|
_{\alpha} ^{\alpha} } }}} }}}.
\end{align}
\end{thm}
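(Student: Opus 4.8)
The plan is to derive both representations from the Tikhonov-type limit formula
$\mathbf{A}^{\dag}=\lim_{\lambda\to 0^{+}}(\mathbf{A}^{*}\mathbf{A}+\lambda\mathbf{I}_{n})^{-1}\mathbf{A}^{*}=\lim_{\lambda\to 0^{+}}\mathbf{A}^{*}(\mathbf{A}\mathbf{A}^{*}+\lambda\mathbf{I}_{m})^{-1}$, which remains valid over $\mathbb{H}$ because $\mathbf{A}^{*}\mathbf{A}$ and $\mathbf{A}\mathbf{A}^{*}$ are Hermitian positive semidefinite and admit a quaternionic spectral decomposition with real eigenvalues. For every $\lambda>0$ the matrix $\mathbf{A}^{*}\mathbf{A}+\lambda\mathbf{I}_{n}$ is Hermitian and invertible, so its determinant is well defined and Cramer's rule for Hermitian systems, from the theory developed in \cite{kyr3}, applies.

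First I would fix $\lambda>0$ and write the $(i,j)$-entry of $(\mathbf{A}^{*}\mathbf{A}+\lambda\mathbf{I}_{n})^{-1}\mathbf{A}^{*}$ as the $i$-th component of the solution of the Hermitian linear system with coefficient matrix $\mathbf{A}^{*}\mathbf{A}+\lambda\mathbf{I}_{n}$ and right-hand side $\mathbf{a}_{.j}^{*}$. Cramer's rule then gives the value $\frac{{\rm cdet}_{i}\bigl((\mathbf{A}^{*}\mathbf{A}+\lambda\mathbf{I}_{n})_{.i}(\mathbf{a}_{.j}^{*})\bigr)}{\det(\mathbf{A}^{*}\mathbf{A}+\lambda\mathbf{I}_{n})}$, where the column linearity of the column determinant (Lemma \ref{lem:right_cdet}) is exactly what allows the multiplication by $\mathbf{A}^{*}$ to be absorbed into the replaced column as the single vector $\mathbf{a}_{.j}^{*}$.

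The heart of the argument is a polynomial expansion in the real parameter $\lambda$. Since $\lambda$ is real it is central and commutes with all quaternion entries, so the row and column determinants expand in $\lambda$ precisely as in the commutative case. The denominator becomes $\sum_{k=0}^{n}\lambda^{n-k}\sum_{\beta\in J_{k,n}}|\mathbf{A}^{*}\mathbf{A}|_{\beta}^{\beta}$; because $\rk(\mathbf{A}^{*}\mathbf{A})=r$, all coefficients with $k>r$ vanish while the coefficient of $\lambda^{n-r}$ equals $\sum_{\beta\in J_{r,n}}|\mathbf{A}^{*}\mathbf{A}|_{\beta}^{\beta}\neq 0$. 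Expanding the numerator the same way produces $\sum_{k}\lambda^{n-k}\sum_{\beta\in J_{k,n}\{i\}}{\rm cdet}_{i}\bigl((\mathbf{A}^{*}\mathbf{A})_{.i}(\mathbf{a}_{.j}^{*})\bigr)_{\beta}^{\beta}$. Dividing and letting $\lambda\to 0^{+}$ collapses both sides onto their $\lambda^{n-r}$ coefficients and yields \eqref{eq:cdet_repr_A*A}.

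The main obstacle is justifying that the numerator carries no term of order lower than $\lambda^{n-r}$, i.e. that its coefficients for $k>r$ vanish, since only then does the limit stay finite and reduce to a clean ratio. Here I would use the key observation that every column of $(\mathbf{A}^{*}\mathbf{A})_{.i}(\mathbf{a}_{.j}^{*})$ lies in the column space of $\mathbf{A}^{*}$: the columns of $\mathbf{A}^{*}\mathbf{A}$ plainly do, and the replacing column $\mathbf{a}_{.j}^{*}$ is itself a column of $\mathbf{A}^{*}$. Hence this bordered matrix has rank at most $r$, forcing all its $k\times k$ column-determinant minors with $k>r$ to vanish. The second representation \eqref{eq:rdet_repr_AA*} follows by the mirror-image argument: starting from $\mathbf{A}^{*}(\mathbf{A}\mathbf{A}^{*}+\lambda\mathbf{I}_{m})^{-1}$, applying Cramer's rule together with the row linearity of the row determinant (Lemma \ref{lem:left_rdet}), and invoking $\rk(\mathbf{A}\mathbf{A}^{*})=r$ with the analogous rank bound on the rows built from $\mathbf{A}$.
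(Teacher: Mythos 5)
Your overall route --- the limit representation $\mathbf{A}^{\dag}=\lim_{\lambda\to0^{+}}(\mathbf{A}^{*}\mathbf{A}+\lambda\mathbf{I}_{n})^{-1}\mathbf{A}^{*}$, Cramer's rule for the Hermitian matrix $\mathbf{A}^{*}\mathbf{A}+\lambda\mathbf{I}_{n}$, expansion in the central real parameter $\lambda$, and identification of the $\lambda^{n-r}$ coefficients --- is exactly the strategy of the cited source \cite{kyr4} (the present paper only quotes the theorem, it does not reprove it). However, your justification of the decisive step, namely that the numerator has no nonzero coefficient of order $k>r$, has a genuine gap. You argue that every column of $(\mathbf{A}^{*}\mathbf{A})_{.i}(\mathbf{a}^{*}_{.j})$ lies in the column space of $\mathbf{A}^{*}$, so this matrix has rank at most $r$, ``forcing all its $k\times k$ column-determinant minors with $k>r$ to vanish.'' Over $\mathbb{H}$ that implication is false: noncommutative column determinants of rank-deficient matrices need not vanish. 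For instance, the matrix $\mathbf{M}=\begin{pmatrix}1 & \mathbf{i}\\ \mathbf{j} & \mathbf{j}\mathbf{i}\end{pmatrix}$ has right-linearly dependent columns (the second equals the first multiplied by $\mathbf{i}$ on the right), hence rank $1$, yet ${\rm{cdet}}_{1}\,\mathbf{M}=(\mathbf{j}\mathbf{i})\cdot 1-\mathbf{i}\cdot\mathbf{j}=-2\mathbf{k}\neq 0$. This is precisely the point where the quaternionic situation departs from the complex one; invoking the complex-style principle ``rank $\le r$ implies all larger minors vanish'' assumes away the only real difficulty of the theorem.

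The conclusion of that step is true, but it must be derived from the special structure of the bordered minors, not from their rank. For $\beta\in J_{k,n}\{i\}$ with $k>r$, write $\mathbf{B}=\mathbf{A}_{.\beta}$; the minor in question is ${\rm{cdet}}_{s}$ of $\mathbf{B}^{*}\mathbf{B}$ with its $s$-th column replaced by $\mathbf{B}^{*}\mathbf{e}_{j}$. Since $\mathcal{C}_{r}(\mathbf{B}^{*}\mathbf{B})=\mathcal{C}_{r}(\mathbf{B}^{*})$, the replacing column is a \emph{right} linear combination of the columns of the Hermitian matrix $\mathbf{B}^{*}\mathbf{B}$, so Lemma \ref{lem:right_cdet} splits the minor into (a) a multiple of the principal minor ${\rm{cdet}}_{s}\left(\mathbf{B}^{*}\mathbf{B}\right)$, which vanishes because a Hermitian quaternion matrix of rank less than $k$ has zero principal minors of order $k$ (the determinantal-rank theory of \cite{kyr3}), and (b) terms ${\rm{cdet}}_{s}$ of $\mathbf{B}^{*}\mathbf{B}$ with its $s$-th column replaced by its $t$-th column, $t\neq s$, which vanish by the cofactor identities for Hermitian matrices underlying Cramer's rule in \cite{kyr2} (the column-determinant form of ${\rm Adj}(\mathbf{H})\,\mathbf{H}=\det(\mathbf{H})\,\mathbf{I}$). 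The mirror argument for \eqref{eq:rdet_repr_AA*} needs the same repair, with Lemma \ref{lem:left_rdet} and left row spaces. The remaining ingredients you use --- additivity of ${\rm{cdet}}$ in each column combined with centrality of the real scalar $\lambda$ for the polynomial expansion, the principal-minor expansion of $\det(\lambda\mathbf{I}+\mathbf{A}^{*}\mathbf{A})$, and nonvanishing of $\sum_{\beta\in J_{r,n}}\left|\mathbf{A}^{*}\mathbf{A}\right|_{\beta}^{\beta}$ --- are indeed available from \cite{kyr2,kyr3} and are used correctly.
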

\begin{rem}\label{rem:unit_repr} For an arbitrary full-rank matrix $ {\bf A} \in  {\mathbb{H}}_{r}^{m\times n} $, a row-vector ${\bf b}\in  {\mathbb{H}}^{1\times m} $, and a column-vector ${\bf c}\in  {\mathbb{H}}^{n\times 1}$,  we put, respectively,
\begin{itemize}
  \item when $r=m$
 \begin{align*} {\rm{rdet}} _{i} \left( {( {\bf A} {\bf A}^{ *}
)_{i.} \left( {\bf b}  \right)} \right)=&
{\sum\limits_{\alpha \in I_{m,m} {\left\{ {i} \right\}}}
{{\rm{rdet}} _{i} \left( {( {\bf A} {\bf A}^{ *}
)_{i.} \left( {\bf b}  \right)} \right)_{\alpha}
^{\alpha} } },\\
\det\left( { {\bf A} {\bf A}^{ *} } \right)=&{\sum\limits_{\alpha \in I_{m,m}}  {{
{\left| { {\bf A} {\bf A}^{ *} } \right|
_{\alpha} ^{\alpha} } }}},~~~~i=1,\ldots,m;
 \end{align*}

 \item
   when $r=n$
  \begin{align*}{\rm{cdet}} _{j} \left(
{\left( { {\bf A}^{ *} {\bf A}} \right)_{. j}
\left( {\bf c}  \right)} \right)=&
\sum\limits_{\beta
\in J_{n,n} {\left\{ {j} \right\}}} {{\rm{cdet}} _{j} \left(
{\left( { {\bf A}^{ *}  {\bf A}} \right)_{. j}
\left( {\bf c}  \right)} \right)
 _{\beta} ^{\beta} },
 \\\det\left(  { {\bf A}^{ *}  {\bf A}}
\right)=&{\sum\limits_{\beta \in
J_{n,n}} {{\left|  { {\bf A}^{ *} {\bf A}}
\right|_{\beta} ^{\beta}  }}},~~~~j=1,\ldots,n. \end{align*}

\end{itemize}
\end{rem}

\begin{cor}\label{cor:det_repr_proj_Q}
If $ {\bf A} \in  {\mathbb{H}}_{r}^{m\times n} $,  then the
determinantal representations of the
projection matrices $ {\bf A}^{ \dag}  {\bf A} = : {\bf
Q}_{A} = \left( {q^A_{ij}} \right)_{n\times n} $ and $ {\bf A}  {\bf A}^{ \dag} = : {\bf
P}_{A} = \left( {p^A_{ij}} \right)_{m\times m} $can be expressed as follows
  \begin{align}\label{eq:det_repr_proj_Q}
q^A_{ij} =  {\frac{{{\sum\limits_{\beta \in J_{r,n} {\left\{ {i}
\right\}}} {{\rm{cdet}} _{i} \left( {\left( {{\bf A}^{ *}
{\bf A}} \right)_{.i} \left({\bf \dot{a}}_{.j} \right)}
\right)  _{\beta} ^{\beta} } }}}{{{\sum\limits_{\beta
\in J_{r,n}}  {{ \left| {{\bf A}^{ *}  {\bf
A}} \right|_{\beta}^{\beta} }}}} }} = {\frac{{{\sum\limits_{\alpha \in I_{r,n} {\left\{ {j}
\right\}}} {{\rm{rdet}} _{j} \left( {\left( {{\bf A}^{ *}
{\bf A}} \right)_{.j} \left({\bf \dot{a}}_{.i} \right)}
\right)  _{\alpha} ^{\alpha} } }}}{{{\sum\limits_{\alpha
\in I_{r,n}}  {{ \left| {{\bf A}^{ *}  {\bf
A}} \right|_{\alpha}^{\alpha} }}}} }},\\
\label{eq:det_repr_proj_P}
p^A_{ij} = {\frac{{{\sum\limits_{\alpha \in I_{r,m} {\left\{ {j}
\right\}}} {{{\rm{rdet}} _{j} {\left( {({\bf A} {\bf A}^{ *}
)_{j .} ({\bf \ddot{a}}  _{i  .} )}
\right)  _{\alpha} ^{\alpha} } }}}
}}{{{\sum\limits_{\alpha \in I_{r,m}} {{ {\left| {
{\bf A} {\bf A}^{ *} } \right| _{\alpha
}^{\alpha} }  }}} }}}= {\frac{{{\sum\limits_{\beta \in J_{r,m} {\left\{ {i}
\right\}}} {{{\rm{cdet}} _{i} {\left( {({\bf A} {\bf A}^{ *}
)_{.\,i } ({\bf \ddot{a}}  _{.j  } )}
\right)  _{\beta} ^{\beta} } }}}
}}{{{\sum\limits_{\beta \in J_{r,m}} {{ {\left| {
{\bf A} {\bf A}^{ *} } \right| _{\beta
}^{\beta} }  }}} }}},
 \end{align}
\noindent where $ {\bf \dot{a}}_{.j} $ and ${\bf \dot{a}}_{.i}$,    ${\bf \ddot{a}} _{i.} $ and ${\bf \ddot{a}}  _{.j  }$ are the $i$-th rows and the $j$-th columns of
${ {\bf A}^{ *}  {\bf A}} \in
{\mathbb{H}}^{n\times n}$ and
${\bf A}{\bf A}^{*}\in  {\mathbb{H}}^{m\times m}$, respectively.
\end{cor}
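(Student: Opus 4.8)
The plan is to derive both determinantal representations of each projector directly from the two Moore--Penrose formulas of Theorem~\ref{th:det_rep_mp}, combined with the linearity Lemmas~\ref{lem:left_rdet} and~\ref{lem:right_cdet}; no new machinery is needed. The underlying observation is that each projector is a product of $\mathbf{A}^{\dagger}$ with $\mathbf{A}$ (or their conjugate transposes), so a representation of a single entry is obtained by substituting the representation of $\mathbf{A}^{\dagger}$ into the product and absorbing the remaining factor into the determinant via linearity.

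First I would treat $\mathbf{Q}_A=\mathbf{A}^{\dagger}\mathbf{A}$. Writing its $(i,j)$ entry as $q^A_{ij}=\sum_{k}a^{\dagger}_{ik}\,a_{kj}$ and inserting the column-determinant representation \eqref{eq:cdet_repr_A*A} of $a^{\dagger}_{ik}$, the common denominator $\sum_{\beta\in J_{r,n}}|\mathbf{A}^{*}\mathbf{A}|^{\beta}_{\beta}$, being a real scalar, factors out, and the numerator becomes $\sum_k \mathrm{cdet}_i\bigl((\mathbf{A}^{*}\mathbf{A})_{.i}(\mathbf{a}^{*}_{.k})\bigr)\,a_{kj}$. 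Since $a_{kj}$ multiplies on the right of the substituted column, Lemma~\ref{lem:right_cdet} pulls the summation inside the column determinant, replacing the inserted column by $\sum_k \mathbf{a}^{*}_{.k}\,a_{kj}$. A one-line computation identifies this vector: its $l$-th entry is $\sum_k \overline{a_{kl}}\,a_{kj}=(\mathbf{A}^{*}\mathbf{A})_{lj}$, so it equals the $j$-th column $\dot{\mathbf{a}}_{.j}$ of $\mathbf{A}^{*}\mathbf{A}$, giving exactly the first (column-determinant) expression in \eqref{eq:det_repr_proj_Q}.

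To reach the second expression for $q^A_{ij}$, I would use that $\mathbf{Q}_A$ is Hermitian and that $(\mathbf{A}^{\dagger})^{*}=(\mathbf{A}^{*})^{\dagger}$, whence $\mathbf{Q}_A=(\mathbf{A}^{\dagger}\mathbf{A})^{*}=\mathbf{A}^{*}(\mathbf{A}^{*})^{\dagger}$. Then $q^A_{ij}=\sum_k \overline{a_{ki}}\,\bigl((\mathbf{A}^{*})^{\dagger}\bigr)_{kj}$, and inserting the row-determinant representation \eqref{eq:rdet_repr_AA*} applied to $\mathbf{A}^{*}$ (note $\mathbf{A}^{*}(\mathbf{A}^{*})^{*}=\mathbf{A}^{*}\mathbf{A}$) exhibits $\overline{a_{ki}}$ as a left multiplier of the substituted row; Lemma~\ref{lem:left_rdet} absorbs the summation into the row determinant, and $\sum_k \overline{a_{ki}}\,\mathbf{a}_{k.}=(\mathbf{A}^{*}\mathbf{A})_{i.}$ produces the row-determinant expression in \eqref{eq:det_repr_proj_Q}. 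The projector $\mathbf{P}_A=\mathbf{A}\mathbf{A}^{\dagger}$ is settled by the mirror-image argument: its row-determinant form \eqref{eq:det_repr_proj_P} follows from $p^A_{ij}=\sum_k a_{ik}\,a^{\dagger}_{kj}$ using \eqref{eq:rdet_repr_AA*} and Lemma~\ref{lem:left_rdet} (with $\sum_k a_{ik}\mathbf{a}^{*}_{k.}=(\mathbf{A}\mathbf{A}^{*})_{i.}$), while the column-determinant form comes from $\mathbf{P}_A=(\mathbf{A}^{*})^{\dagger}\mathbf{A}^{*}$ together with \eqref{eq:cdet_repr_A*A} and Lemma~\ref{lem:right_cdet}.

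The two index computations $\sum_k \mathbf{a}^{*}_{.k}a_{kj}=(\mathbf{A}^{*}\mathbf{A})_{.j}$ and $\sum_k a_{ik}\mathbf{a}^{*}_{k.}=(\mathbf{A}\mathbf{A}^{*})_{i.}$ are routine. The step that needs the most care is the bookkeeping of the principal-minor sums $\sum_{\beta}$ and $\sum_{\alpha}$ when the linearity lemmas are invoked: since each lemma acts on a fixed square submatrix, I must verify that the substituted column (resp. row), once restricted to the index set $\beta$ (resp. $\alpha$), is the same right (resp. left) linear combination of the columns (rows) of that submatrix, so that the summation commutes with the minor sum. The only genuinely nontrivial point is choosing, in each of the four cases, the correct pairing of Moore--Penrose formula (\,$\mathrm{cdet}$ via $\mathbf{A}^{*}\mathbf{A}$ versus $\mathrm{rdet}$ via $\mathbf{A}\mathbf{A}^{*}$\,) with the matching linearity lemma, and invoking the Hermitian identities $\mathbf{Q}_A=\mathbf{A}^{*}(\mathbf{A}^{*})^{\dagger}$ and $\mathbf{P}_A=(\mathbf{A}^{*})^{\dagger}\mathbf{A}^{*}$ to obtain the ``dual'' representation.
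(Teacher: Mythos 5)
Your proposal is correct and follows what is essentially the paper's own (implicit) route: the corollary is stated without a separate proof as a direct consequence of Theorem \ref{th:det_rep_mp}, and the standard derivation in the author's cited earlier work is exactly yours --- substitute \eqref{eq:cdet_repr_A*A} and \eqref{eq:rdet_repr_AA*} into ${\bf Q}_A={\bf A}^{\dag}{\bf A}$, ${\bf P}_A={\bf A}{\bf A}^{\dag}$ and into the conjugate factorizations ${\bf Q}_A={\bf A}^{*}({\bf A}^{*})^{\dag}$, ${\bf P}_A=({\bf A}^{*})^{\dag}{\bf A}^{*}$, then absorb the scalar sums into the determinants by Lemmas \ref{lem:left_rdet} and \ref{lem:right_cdet}. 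The one discrepancy is notational rather than a gap: your second expression for $q^A_{ij}$ comes out as ${\rm{rdet}}_{j}$ of $\left({\bf A}^{*}{\bf A}\right)_{j.}\left({\bf \dot{a}}_{i.}\right)$ (the $j$-th \emph{row} replaced by the $i$-th row), whereas \eqref{eq:det_repr_proj_Q} literally prints $\left({\bf A}^{*}{\bf A}\right)_{.j}\left({\bf \dot{a}}_{.i}\right)$; the row-replacement form is the intended one (an ${\rm{rdet}}_{j}$ pairs with replacement of the $j$-th row, as in the author's earlier papers from which this corollary is quoted), and it agrees with your formula by Hermiticity of ${\bf A}^{*}{\bf A}$ combined with the conjugation property ${\rm{cdet}}_{i}({\bf M})=\overline{{\rm{rdet}}_{i}({\bf M}^{*})}$, so your argument does establish the stated result.
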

The following corollary gives determinantal representations of the Moore-Penrose inverse and of  both projectors in complex matrices.
\begin{cor} \cite{kyr}\label{cor:det_repr_MP_c}
Let $ {\bf A} \in  {\mathbb{C}}_{r}^{m\times n} $. Then the
following determinantal representations are obtained
\begin{enumerate}
  \item[(i)] for the Moore-Penrose inverse  ${\rm {\bf A}}^{ \dag} = \left( {a_{ij}^{
\dag} } \right)_{n\times m} $,
  \begin{equation*}
 a_{ij}^{ \dag}  = {\frac{{{\sum\limits_{\beta
\in J_{r,n} {\left\{ {i} \right\}}} { \left|
{\left( { {\bf A}^{ *} {\bf A}} \right)_{. i}
\left( { {\bf a}_{\,.j}^{ *} }  \right)} \right|
 _{\beta} ^{\beta} } } }}{{{\sum\limits_{\beta \in
J_{r,n}} {{\left|  { {\bf A}^{ *}  {\bf A}}
\right| _{\beta} ^{\beta}  }}} }}}=
{\frac{{{\sum\limits_{\alpha \in I_{r,m} {\left\{ {j} \right\}}}
{ \left| {( {\bf A} {\bf A}^{ *}
)_{j.} ( {\bf a}_{\,i.}^{ *} )} \right|_{\alpha}
^{\alpha} } }}}{{{\sum\limits_{\alpha \in I_{r,m}}  {{
{\left| { {\bf A} {\bf A}^{ *} } \right|
_{\alpha} ^{\alpha} } }}} }}};
\end{equation*}
  \item[(ii)] for the
projector $ {\bf
Q}_{A} = \left( {q_{ij}} \right)_{n\times n} $,
  \begin{equation*}
q_{ij} = {\frac{{{\sum\limits_{\beta \in J_{r,n} {\left\{ {i}
\right\}}} { \left| {\left( {{\bf A}^{ *}
{\bf A}} \right)_{.i} \left({\bf \dot{a}}_{.j} \right)}
\right|  _{\beta} ^{\beta} } }}}{{{\sum\limits_{\beta
\in J_{r,n}}  {{ \left| {{\bf A}^{ *}  {\bf
A}} \right|_{\beta}^{\beta} }}}} }},
 \end{equation*}
where $ {\bf \dot{a}}_{.j} $ is the $j$th column of
${ {\bf A}^{ *}  {\bf A}}$;
  \item[(iii)] for the
projector $  {\bf
P}_{A} = \left( {p_{ij}} \right)_{m\times m} $,
 \begin{equation*}
p_{ij} = {\frac{{{\sum\limits_{\alpha \in I_{r,m} {\left\{ {j}
\right\}}} {{ {\left| {({\bf A} {\bf A}^{ *}
)_{j .} ({\bf \ddot{a}}  _{i  .} )}
\right|  _{\alpha} ^{\alpha} } }}}
}}{{{\sum\limits_{\alpha \in I_{r,m}} {{ {\left| {
{\bf A} {\bf A}^{ *} } \right| _{\alpha
}^{\alpha} }  }}} }}},
 \end{equation*}
where ${\bf \ddot{a}} _{i.} $ is the $i$th row of $
{\bf A}{\bf A}^{*}$.
\end{enumerate}
\end{cor}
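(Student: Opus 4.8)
The plan is to deduce the statement directly from the quaternion formulas of Theorem~\ref{th:det_rep_mp} and Corollary~\ref{cor:det_repr_proj_Q} by exploiting the fact that $\mathbb{C}$ sits inside $\mathbb{H}$ as the subfield generated by $1$ and $\mathbf{i}$. Thus every $\mathbf{A}\in\mathbb{C}_r^{m\times n}$ is in particular a quaternion matrix; its conjugate transpose $\mathbf{A}^{*}$ computed over $\mathbb{H}$ agrees with the usual complex conjugate transpose, and the Moore--Penrose inverse together with the projectors $\mathbf{Q}_{A}$, $\mathbf{P}_{A}$ are the same whether computed over $\mathbb{C}$ or over $\mathbb{H}$. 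Consequently the right-hand sides of \eqref{eq:cdet_repr_A*A}--\eqref{eq:rdet_repr_AA*} and of \eqref{eq:det_repr_proj_Q}--\eqref{eq:det_repr_proj_P} are valid expressions for $a_{ij}^{\dag}$, $q^{A}_{ij}$, $p^{A}_{ij}$ in this setting. The only substantive point is to show that, for a matrix all of whose entries lie in the commutative field $\mathbb{C}$, the noncommutative determinants ${\rm{rdet}}_{i}$ and ${\rm{cdet}}_{j}$ collapse to the classical determinant $|\cdot|$, independently of the distinguished index.

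To establish this reduction I would inspect the defining sums of the row and column determinants. Fix $\mathbf{M}\in\mathbb{C}^{k\times k}$ and a permutation $\sigma\in S_{k}$ written as a product of $r$ disjoint cycles in the left-ordered (resp.\ right-ordered) normal form prescribed by the two definitions. The corresponding summand is the scalar $(-1)^{k-r}$ times a product of the entries $m_{pq}$ read off along the cycles of $\sigma$ in the prescribed order. When the entries commute this product may be reordered freely, so it equals $\prod_{t=1}^{k} m_{t\,\sigma(t)}$, the product attached to $\sigma$ in the Leibniz expansion. Moreover a permutation of $k$ symbols that splits into $r$ disjoint cycles is a product of $k-r$ transpositions, hence has signature $(-1)^{k-r}$, so the scalar prefactor is exactly $\operatorname{sgn}(\sigma)$. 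Therefore each summand is $\operatorname{sgn}(\sigma)\prod_{t} m_{t\,\sigma(t)}$ and, summing over $S_{k}$, both ${\rm{rdet}}_{i}\,\mathbf{M}$ and ${\rm{cdet}}_{j}\,\mathbf{M}$ reduce to $\det\mathbf{M}=|\mathbf{M}|$ for every choice of $i$ and $j$. This also re-derives, in the complex case, the coincidence of all row and column determinants that for Hermitian matrices was recalled in Section~2.1.

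Finally I would substitute this identification into the quaternion formulas. In the numerators of \eqref{eq:cdet_repr_A*A}, \eqref{eq:rdet_repr_AA*}, \eqref{eq:det_repr_proj_Q} and \eqref{eq:det_repr_proj_P}, each ${\rm{cdet}}_{i}$ or ${\rm{rdet}}_{j}$ of the relevant $r\times r$ submatrix becomes the ordinary minor $|\cdot|_{\beta}^{\beta}$ or $|\cdot|_{\alpha}^{\alpha}$; the denominators are already sums of principal minors $|\mathbf{A}^{*}\mathbf{A}|_{\beta}^{\beta}$ and $|\mathbf{A}\mathbf{A}^{*}|_{\alpha}^{\alpha}$ of the Hermitian matrices and are left unchanged. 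This yields (i) from Theorem~\ref{th:det_rep_mp} and (ii), (iii) from Corollary~\ref{cor:det_repr_proj_Q}, where for $\mathbf{Q}_{A}$ and $\mathbf{P}_{A}$ one keeps a single expression because the row and the column versions now give identical values. The main obstacle is purely bookkeeping: matching the left-ordered and right-ordered cycle conventions of the two definitions against the standard Leibniz indexing, so that the prefactor $(-1)^{k-r}$ is correctly identified with $\operatorname{sgn}(\sigma)$. Once that sign identity is in hand, the commutativity of $\mathbb{C}$ makes the remainder immediate.
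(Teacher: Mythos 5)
Your proposal is correct and follows essentially the same route the paper intends: the statement is positioned (and cited from \cite{kyr}) precisely as the complex specialization of Theorem~\ref{th:det_rep_mp} and Corollary~\ref{cor:det_repr_proj_Q}, which is exactly the reduction you carry out. Your two supporting points are both sound and are the only substantive ones needed: the Moore--Penrose inverse and the projectors of a complex matrix coincide over $\mathbb{C}$ and over $\mathbb{H}$ by uniqueness of the solution of the Penrose equations, and for matrices with commuting entries every ${\rm{rdet}}_{i}$ and ${\rm{cdet}}_{j}$ collapses to the classical determinant because a permutation of $k$ symbols with $r$ disjoint cycles has signature $(-1)^{k-r}$, so each summand becomes the usual Leibniz term $\operatorname{sgn}(\sigma)\prod_{t}m_{t\,\sigma(t)}$.
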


There are two case for determinantal representations of the W-weighted Drazin inverse over the quaternion skew field.
\begin{lem}\cite{kyr6}\label{theor:det_rep_wdraz1}
 Let ${\bf A} \in  {\mathbb{H}}^{m\times n}$, ${\bf W}\in {\mathbb{H}}^{n\times m}$,  $k= {\max}\{\Ind\left({\bf A}{\bf W}\right), \Ind\left({\bf W}{\bf A}\right)\}$. Denote ${\bf A}{\bf W}={\bf V}=\left(v_{ij}
\right)  \in {\mathbb{H}}^{m\times m}$ and ${\bf W}{\bf A}={\bf
U}=\left(u_{ij} \right)\in {\mathbb{H}}^{n\times n}$. Then for  $ {\bf
A}_{d,W} = \left( {a_{ij}^{d,W} } \right) \in
{\mathbb{H}}^{m\times n} $, we have
\begin{itemize}
  \item[(i)] if $\rk{\bf U}^{k+1} =
\rk{\bf U}^{k} = r$,

\begin{equation}\label{eq:det_rep_u}
a_{ij}^{d,W}=
{\frac{\sum\limits_{s = 1}^{n}\left({{\sum\limits_{\alpha \in I_{r,\,n} {\left\{ {s}
\right\}}} {{\rm{rdet}} _{s} \left( {\left( { {\bf U}^{ 2k+1} \left({\bf U}^{ 2k+1} \right)^{*}
} \right)_{s.} (\widetilde{ {\bf \phi}}_{i\,.})} \right)  _{\alpha} ^{\alpha} } }
}\right){u}_{sj}^{(k)}}{{\left(
{\sum\limits_{\alpha \in I_{r,\,n}} {{\left|  {\bf U}^{ 2k+1} \left({\bf U}^{ 2k+1} \right)^{*}
 \right|_{\alpha} ^{\alpha}}}}\right)^2 }}},
\end{equation}
where  $\widetilde{ {\bf \phi}}_{i\,.}$ is the $i$-th row of $
\widetilde{ {\bf \Phi}}:={\bf A}{\bf \Phi}{\bf U}^{2k}({\bf U}^{ 2k+1})^{*}\in {\mathbb{H}}^{m\times n}$, and $
{\bf \Phi}=(\phi_{lq})\in  {\mathbb{H}}^{n\times n}$ such that

$$
\phi_{lq}={\sum\limits_{\alpha \in I_{r,\,n} {\left\{ {q}
\right\}}} {{\rm{rdet}} _{q} \left( {\left( { {\bf U}^{ 2k+1} \left({\bf U}^{ 2k+1} \right)^{*}
} \right)_{q.} (\check{ {\bf u}}_{l\,.})} \right)  _{\alpha} ^{\alpha} } }.
$$
Here
$\check{ {\bf u}}_{l.}$ is the $l$-th row of $
{\bf U}^{k}({\bf U}^{ 2k+1})^{*} =:\check{ {\bf U}}\in {\mathbb{H}}^{n\times n}$;
  \item [(ii)] if $\rk{\bf V}^{k+1} =
\rk{\bf V}^{k} = r$,

\begin{equation}\label{eq:det_rep_v}
a_{ij}^{d,W}=
 {\frac{{ \sum\limits_{t = 1}^{m} {v}_{it}^{(k)}   {\sum\limits_{\beta \in J_{r,\,m} {\left\{ {t}
\right\}}} {{\rm{cdet}} _{t} \left( {\left(\left({\bf V}^{ 2k+1} \right)^{*}{\bf V}^{ 2k+1} \right)_{. \,t} \left( \widetilde{ {\bf \psi}}_{.\,j}
\right)} \right)  _{\beta} ^{\beta} } }
}}{{\left({\sum\limits_{\beta \in J_{r,\,m}} {{\left| \left({\bf V}^{ 2k+1} \right)^{*}{\bf V}^{ 2k+1}
  \right|_{\beta} ^{\beta}}}} \right)^2}}}
\end{equation}
where  $\widetilde{ {\bf \psi}}_{.\,j}$ is the $j$-th column of $
\widetilde{ {\bf \Psi}}:=({\bf V}^{ 2k+1})^{*}{\bf V}^{2k}{\bf \Psi}{\bf A}\in {\mathbb{H}}^{m\times n}$, and $
{\bf \Psi}=(\psi_{st})\in  {\mathbb{H}}^{m\times m}$ such that

$$
\psi_{st}={\sum\limits_{\beta \in J_{r,\,m} {\left\{ {s}
\right\}}} {{\rm{cdet}} _{s} \left( {\left(\left({\bf V}^{ 2k+1} \right)^{*}{\bf V}^{ 2k+1} \right)_{. s} \left( \hat{ {\bf v}}_{.t}
\right)} \right)  _{\beta} ^{\beta} } }
$$
Here
$\hat{ {\bf v}}_{.t}$ is the $t$-th column of  $
({\bf V}^{ 2k+1})^{*}{\bf V}^{k} =:\hat{ {\bf V}}\in {\mathbb{H}}^{m\times m}$.
\end{itemize}

\end{lem}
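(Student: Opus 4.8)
The plan is to reduce the weighted Drazin inverse to squares of ordinary Drazin inverses of the square matrices $\mathbf{U}=\mathbf{W}\mathbf{A}$ and $\mathbf{V}=\mathbf{A}\mathbf{W}$, and then to represent each ordinary Drazin inverse through the Moore--Penrose inverse of an appropriate power, for which Theorem~\ref{th:det_rep_mp} already supplies a determinantal representation. Concretely, I would start from the identity \eqref{eq:WDr_dr}, which in the present notation reads $\mathbf{A}_{d,W}=\mathbf{A}\left(\mathbf{U}^{d}\right)^{2}$ and $\mathbf{A}_{d,W}=\left(\mathbf{V}^{d}\right)^{2}\mathbf{A}$. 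Case~(i) is governed by the first expression and case~(ii) by the second; since the two are dual under conjugate transposition (row determinants $\leftrightarrow$ column determinants, Lemma~\ref{lem:left_rdet} $\leftrightarrow$ Lemma~\ref{lem:right_cdet}), it suffices to carry out (i) in detail and transfer the argument to (ii).

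For case (i), the next step is to use the standard representation of the Drazin inverse of an index-$k$ matrix through a Moore--Penrose inverse, namely $\mathbf{U}^{d}=\mathbf{U}^{k}\left(\mathbf{U}^{2k+1}\right)^{\dagger}\mathbf{U}^{k}$, which remains valid over $\mathbb{H}$ once the hypothesis $r=\rk\mathbf{U}^{k+1}=\rk\mathbf{U}^{k}$ guarantees $k\ge\Ind\mathbf{U}$. Squaring and collecting the middle powers gives
\[
\left(\mathbf{U}^{d}\right)^{2}=\mathbf{U}^{k}\left(\mathbf{U}^{2k+1}\right)^{\dagger}\mathbf{U}^{2k}\left(\mathbf{U}^{2k+1}\right)^{\dagger}\mathbf{U}^{k},
\]
so that $\mathbf{A}_{d,W}=\mathbf{A}\,\mathbf{U}^{k}\left(\mathbf{U}^{2k+1}\right)^{\dagger}\mathbf{U}^{2k}\left(\mathbf{U}^{2k+1}\right)^{\dagger}\mathbf{U}^{k}$. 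The presence of \emph{two} copies of $\left(\mathbf{U}^{2k+1}\right)^{\dagger}$ is precisely what will force the square of the determinant in the denominator of \eqref{eq:det_rep_u}.

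Then I would substitute $\left(\mathbf{U}^{2k+1}\right)^{\dagger}=\left(\mathbf{U}^{2k+1}\right)^{*}\mathbf{G}$, where $\mathbf{G}=\left(\mathbf{U}^{2k+1}\left(\mathbf{U}^{2k+1}\right)^{*}\right)^{\dagger}$, and apply the row-determinant form \eqref{eq:rdet_repr_AA*} of Theorem~\ref{th:det_rep_mp} to the Hermitian matrix $\mathbf{U}^{2k+1}\left(\mathbf{U}^{2k+1}\right)^{*}$. Writing $d:=\sum_{\alpha\in I_{r,n}}\left|\mathbf{U}^{2k+1}\left(\mathbf{U}^{2k+1}\right)^{*}\right|_{\alpha}^{\alpha}$, the key computation is that left-linearity of the row determinant (Lemma~\ref{lem:left_rdet}) lets one pull the entries of $\mathbf{U}^{k}$ inside, so that $\sum_{i}u^{(k)}_{li}\,\mathrm{rdet}_{q}\!\left(\cdots(\mathbf{u}^{2k+1})^{*}_{i.}\right)=\mathrm{rdet}_{q}\!\left(\cdots\check{\mathbf{u}}_{l.}\right)$ with $\check{\mathbf{u}}_{l.}$ the $l$-th row of $\check{\mathbf{U}}=\mathbf{U}^{k}\left(\mathbf{U}^{2k+1}\right)^{*}$. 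This identifies $\mathbf{\Phi}=d\,\mathbf{U}^{k}\left(\mathbf{U}^{2k+1}\right)^{\dagger}=d\,\check{\mathbf{U}}\mathbf{G}$. Folding the left factor $\mathbf{A}$ and the middle factor $\mathbf{U}^{2k}\left(\mathbf{U}^{2k+1}\right)^{*}$ into $\mathbf{\Phi}$ produces $\widetilde{\mathbf{\Phi}}=\mathbf{A}\mathbf{\Phi}\mathbf{U}^{2k}\left(\mathbf{U}^{2k+1}\right)^{*}$, and a second application of \eqref{eq:rdet_repr_AA*} to the rows of $\widetilde{\mathbf{\Phi}}$, combined with the right multiplication by $\mathbf{U}^{k}$ encoded in the outer sum $\sum_{s}(\cdots)u^{(k)}_{sj}$, reassembles $\mathbf{A}\,\mathbf{U}^{k}\left(\mathbf{U}^{2k+1}\right)^{\dagger}\mathbf{U}^{2k}\left(\mathbf{U}^{2k+1}\right)^{\dagger}\mathbf{U}^{k}$. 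Each of the two Moore--Penrose representations contributes one factor $d$, so dividing by $d^{2}$ yields exactly \eqref{eq:det_rep_u}.

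Throughout, the main obstacle is the noncommutative bookkeeping: the scalars generated by replacing a row (resp.\ column) inside a row (resp.\ column) determinant may be extracted only on the correct side, governed by Lemma~\ref{lem:left_rdet} and Lemma~\ref{lem:right_cdet}, and the nested expansions collapse to the stated single sums only when every quaternion factor and every row/column index is placed consistently; the algebra itself is routine once this sidedness is fixed. Case~(ii) is the conjugate-transpose dual: applying the same argument to $\left(\mathbf{V}^{d}\right)^{2}\mathbf{A}$ with the column-determinant form \eqref{eq:cdet_repr_A*A} and Lemma~\ref{lem:right_cdet} produces \eqref{eq:det_rep_v}.
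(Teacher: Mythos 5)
Your proposal is correct and is essentially the route behind this lemma, which the paper itself only quotes from \cite{kyr6} without reproving: starting from ${\bf A}_{d,W}={\bf A}\left(({\bf W}{\bf A})^{d}\right)^{2}$ in (\ref{eq:WDr_dr}), writing $({\bf U}^{d})^{2}={\bf U}^{k}\left({\bf U}^{2k+1}\right)^{\dag}{\bf U}^{2k}\left({\bf U}^{2k+1}\right)^{\dag}{\bf U}^{k}$, and applying Theorem \ref{th:det_rep_mp} twice with the linearity Lemmas \ref{lem:left_rdet} and \ref{lem:right_cdet} to fold ${\bf U}^{k}$ and the middle factor into the rows $\check{\bf u}_{l.}$ and $\widetilde{\bf \phi}_{i.}$, which is exactly what produces the matrices ${\bf \Phi}$, $\widetilde{\bf \Phi}$ and the squared denominator in (\ref{eq:det_rep_u}). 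This is also the same fold-in technique the present paper uses in its own proofs of Theorems \ref{th:drrcep}, \ref{th:detrep_dmp}, and \ref{th:detrep_cmp}, so your reconstruction matches both the cited source and the paper's methodology.
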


\begin{lem}\cite{kyr9}\label{theor:det_rep_wdrazh}
  Let ${\bf A} \in  {\mathbb{H}}^{m\times n}$, ${\bf W}\in {\mathbb{H}}^{n\times m}$,  $k= {\max}\{\Ind\left({\bf A}{\bf W}\right), \Ind\left({\bf W}{\bf A}\right)\}$.
Then for  $ {\bf
A}_{d,W} = \left( {a_{ij}^{d,W} } \right) \in
{\mathbb{H}}^{m\times n} $, we have
\begin{itemize}
  \item [(i)] if the matrix ${\bf A}{\bf W}\in {\mathbb{H}}^{m\times m}$ is Hermitian  and $\rk({\bf A}{\bf W})^{k+1} =
\rk({\bf A}{\bf W})^{k} = r$, then

\begin{equation}
\label{eq:dr_rep_wcdet} a_{ij}^{d,W}  = {\frac{\sum\limits_{\beta
\in J_{r,\,m} {\left\{ {i} \right\}}} {{\rm{cdet}} _{i} \left(
{\left( {{\rm {\bf A}}{\bf W}} \right)^{k+2}_{. \,i} \left( {{\rm {\bf
\bar{v}}}_{.j} }  \right)} \right) _{\beta}
^{\beta} }  }{\sum\limits_{\beta \in J_{r,m}} {{\left|
{\left( { {\bf A}{\bf W}} \right)^{k+2}}  \right|_{\beta} ^{\beta}}} }},
\end{equation}
where ${\rm {\bf \bar{v}}}_{.j} $ is the $j$-th column of  ${\bf \bar{V}}=({\bf A}{\bf W})^{k}{\bf A} $ for all $j=1,\ldots,n$.
  \item[(ii)]
if  ${\bf W}{\bf A}\in {\mathbb{H}}^{n\times n}$ is Hermitian  and $\rk({\bf W}{\bf A})^{k+1} =
\rk({\bf W}{\bf A})^{k} = r$, then
\begin{equation}
\label{eq:dr_rep_wrdet} a_{ij}^{d,W}  = {\frac{\sum\limits_{\alpha
\in I_{r,n} {\left\{ {j} \right\}}} {{\rm{rdet}} _{j} \left(
{({\bf W}{\rm {\bf A}} )^{ k+2}_{j.} ( {\bf \bar{u}}_{i.} )}
\right)_{\alpha} ^{\alpha} } }{\sum\limits_{\alpha \in
I_{r,n}}  {{\left| {\left({\bf W} {{\rm {\bf A}} } \right)^{k+2}   } \right|_{\alpha} ^{\alpha}}}} }.
\end{equation}

where  $ {\bf
\bar{u}}_{i.} $ is  the $i$-th row of  ${\bar
{ \bf{U}}}={\bf A}({\bf W}{\bf A})^{k} $ for all $i={1,\ldots,m}$.

\end{itemize}
\end{lem}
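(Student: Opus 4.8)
The plan is to reduce each weighted case to an ordinary Cramer rule for a Hermitian matrix, exploiting the factorizations in \eqref{eq:WDr_dr}. I treat part (i); part (ii) is entirely symmetric.

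Set ${\bf M}:={\bf A}{\bf W}$, which is Hermitian by hypothesis, and recall from \eqref{eq:WDr_dr} that ${\bf A}_{d,{\bf W}}=\left(({\bf A}{\bf W})^{d}\right)^{2}{\bf A}=({\bf M}^{d})^{2}{\bf A}$. The first key step is the Hermitian reduction identity
\begin{equation*}
({\bf M}^{d})^{2}=\left({\bf M}^{k+2}\right)^{\dag}{\bf M}^{k}.
\end{equation*}
I would verify this from the spectral decomposition of the Hermitian quaternion matrix ${\bf M}={\bf U}\,{\rm diag}(\lambda_{1},\ldots,\lambda_{r},0,\ldots,0)\,{\bf U}^{*}$, with real $\lambda_{l}\neq 0$ and unitary ${\bf U}$: on the range both sides act as ${\rm diag}(\lambda_{l}^{-2})$, and both annihilate the kernel. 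Equivalently, one checks it directly from the defining equations of ${\bf M}^{d}$ and ${\bf M}^{\dag}$, using that $k\ge\Ind{\bf M}$ forces $\rk{\bf M}^{k+2}=\rk{\bf M}^{k}=r$ and $\mathcal{R}({\bf M}^{k+2})=\mathcal{R}({\bf M}^{k})$. Substituting, I obtain
\begin{equation*}
{\bf A}_{d,{\bf W}}=\left({\bf M}^{k+2}\right)^{\dag}{\bf M}^{k}{\bf A}=\left({\bf M}^{k+2}\right)^{\dag}\bar{\bf V},\qquad \bar{\bf V}=({\bf A}{\bf W})^{k}{\bf A}.
\end{equation*}

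The second step records the crucial range condition: since $\bar{\bf V}={\bf M}^{k}{\bf A}$, each column $\bar{\bf v}_{.j}$ lies in $\mathcal{R}({\bf M}^{k})=\mathcal{R}({\bf M}^{k+2})$. Hence, writing ${\bf H}:={\bf M}^{k+2}$ (a rank-$r$ Hermitian matrix), the $j$-th column of ${\bf A}_{d,{\bf W}}$ is the unique vector in $\mathcal{R}({\bf H})$ solving ${\bf H}{\bf x}=\bar{\bf v}_{.j}$, namely ${\bf x}={\bf H}^{\dag}\bar{\bf v}_{.j}$. For such a consistent Hermitian system I would invoke a Cramer rule that follows from the row--column determinant theory and specialises the cdet-form of Theorem \ref{th:det_rep_mp} to ${\bf H}^{*}={\bf H}$ once the range condition is imposed, so that ${\bf H}$ itself, rather than ${\bf H}^{*}{\bf H}$, appears: $x_{i}=\bigl(\sum_{\beta\in J_{r,m}\{i\}}{\rm cdet}_{i}({\bf H}_{.i}(\bar{\bf v}_{.j}))_{\beta}^{\beta}\bigr)/\bigl(\sum_{\beta\in J_{r,m}}|{\bf H}|_{\beta}^{\beta}\bigr)$. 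Applying this column by column yields precisely \eqref{eq:dr_rep_wcdet}; the denominator $\sum_{\beta}|{\bf M}^{k+2}|_{\beta}^{\beta}$ is nonzero because it equals the $r$-th elementary symmetric function of the real nonzero eigenvalues of ${\bf H}$.

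For part (ii) I would start instead from the left factorization ${\bf A}_{d,{\bf W}}={\bf A}\left(({\bf W}{\bf A})^{d}\right)^{2}$ in \eqref{eq:WDr_dr}, put ${\bf N}:={\bf W}{\bf A}$ (Hermitian), use the mirror identity $({\bf N}^{d})^{2}={\bf N}^{k}({\bf N}^{k+2})^{\dag}$ to obtain ${\bf A}_{d,{\bf W}}=\bar{\bf U}({\bf N}^{k+2})^{\dag}$ with $\bar{\bf U}={\bf A}({\bf W}{\bf A})^{k}$, and then transpose the argument: each row of $\bar{\bf U}$ lies in the row space of ${\bf N}^{k+2}$, and the rdet-form of the same Cramer rule, together with the left-linearity of Lemma \ref{lem:left_rdet}, gives \eqref{eq:dr_rep_wrdet}. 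The main obstacle in each part is precisely the first step: justifying the collapse of the exponent from $4k+2$ (which the general Lemma \ref{theor:det_rep_wdraz1} would produce, since $({\bf V}^{2k+1})^{*}{\bf V}^{2k+1}={\bf V}^{4k+2}$ when ${\bf V}$ is Hermitian) down to $k+2$. This rests entirely on the Hermitian reduction identity and on verifying the range condition that permits the direct, lower-degree Cramer representation.
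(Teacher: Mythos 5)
The paper itself contains no proof of this lemma: it is imported verbatim from \cite{kyr9}, so the only meaningful comparison is with the technique of that source, which (as in the companion Drazin-inverse papers \cite{kyr5,kyr1}) proceeds through limit representations of the (weighted) Drazin inverse and expansions of $\det(\alpha{\bf I}+{\bf H})$ and of $\mathrm{cdet}_{i}\left((\alpha{\bf I}+{\bf H})_{.i}({\bf b})\right)$ into sums of principal minors, with separate lemmas showing that the terms of order $s>r$ vanish. Your first two steps are correct and cleanly argued: quaternion Hermitian matrices are unitarily diagonalizable with real eigenvalues, both the Drazin and Moore--Penrose inverses respect that decomposition, so $({\bf M}^{d})^{2}=({\bf M}^{k+2})^{\dag}{\bf M}^{k}$ holds for ${\bf M}={\bf A}{\bf W}$ Hermitian; moreover the columns of $\bar{\bf V}={\bf M}^{k}{\bf A}$ indeed lie in $\mathcal{C}_{r}({\bf M}^{k})=\mathcal{C}_{r}({\bf M}^{k+2})$, so ${\bf A}_{d,{\bf W}}=({\bf M}^{k+2})^{\dag}\bar{\bf V}$ and each column of ${\bf A}_{d,{\bf W}}$ solves a consistent Hermitian system.

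The genuine gap is your final step, and you have located the difficulty in the wrong place. The Cramer rule you invoke --- that for a consistent singular Hermitian system ${\bf H}{\bf x}=\bar{\bf v}_{.j}$ the vector ${\bf H}^{\dag}\bar{\bf v}_{.j}$ has components $\sum_{\beta\in J_{r,m}\left\{i\right\}}\mathrm{cdet}_{i}({\bf H}_{.i}(\bar{\bf v}_{.j}))_{\beta}^{\beta}\big/\sum_{\beta\in J_{r,m}}|{\bf H}|_{\beta}^{\beta}$ --- does \emph{not} follow by ``specialising the cdet-form of Theorem \ref{th:det_rep_mp} to ${\bf H}^{*}={\bf H}$'': that theorem applied to ${\bf H}$ yields numerators and denominators built from ${\bf H}^{*}{\bf H}={\bf H}^{2}$, and lowering the degree from minors of ${\bf H}^{2}$ to minors of ${\bf H}$ under the range condition is exactly the nontrivial content of the lemma; the rule you invoke is equivalent in strength to the statement being proved. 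In the quaternion setting this passage cannot be waved through, because rank deficiency does not force row/column determinants of non-Hermitian arrays to vanish (a rank-one $2\times 2$ quaternion matrix can have nonzero $\mathrm{rdet}_{1}$), so one must prove that $\sum_{\beta\in J_{s,m}\left\{i\right\}}\mathrm{cdet}_{i}({\bf H}_{.i}(\bar{\bf v}_{.j}))_{\beta}^{\beta}=0$ for all $s>r$ whenever $\bar{\bf v}_{.j}\in\mathcal{C}_{r}({\bf H})$, and then pass to the limit $\lim_{\alpha\to 0}(\alpha{\bf I}+{\bf H})^{-1}\bar{\bf v}_{.j}$ using the minor expansions --- which is precisely where all the row--column determinant work of \cite{kyr9} is spent. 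So the reduction identity and the range condition, which you call the ``main obstacle,'' are in fact the easy part; the lower-degree Cramer representation is the hard part, and as written it is asserted rather than proved. Either carry out that limit-and-expansion argument or cite it explicitly (it is the Hermitian-case Drazin representation of \cite{kyr5,kyr9}); with such a citation your reduction would constitute a correct, and arguably tidier, alternative derivation.
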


The following corollary gives determinantal representations of the W-weighted Drazin inverse  in complex matrices.

\begin{cor}\cite{kyr_nov}\label{theor:det_rep_wdrazc}
  Let ${\bf A} \in  {\mathbb{H}}^{m\times n}$, ${\bf W}\in {\mathbb{H}}^{n\times m}$,  $k= {\max}\{\Ind\left({\bf A}{\bf W}\right), \Ind\left({\bf W}{\bf A}\right)\}$.
Then   $ {\bf
A}_{d,W} = \left( {a_{ij}^{d,W} } \right) \in
{\mathbb{H}}^{m\times n} $ can be expressed as
\begin{align*}
 a_{ij}^{d,W}  =& {\frac{\sum\limits_{\beta
\in J_{r,\,m} {\left\{ {i} \right\}}} { \left(
{\left| {{\rm {\bf A}}{\bf W}} \right)^{k+2}_{. \,i} \left( {{\rm {\bf
\bar{v}}}_{.j} }  \right)} \right| _{\beta}
^{\beta} }  }{\sum\limits_{\beta \in J_{r,m}} {{\left|
{\left( { {\bf A}{\bf W}} \right)^{k+2}}  \right|_{\beta} ^{\beta}}} }}= {\frac{\sum\limits_{\alpha
\in I_{r,n} {\left\{ {j} \right\}}} { \left|
{({\bf W}{\rm {\bf A}} )^{ k+2}_{j.} ( {\bf \bar{u}}_{i.} )}
\right|_{\alpha} ^{\alpha} } }{\sum\limits_{\alpha \in
I_{r,n}}  {{\left| {\left({\bf W} {{\rm {\bf A}} } \right)^{k+2}   } \right|_{\alpha} ^{\alpha}}}} }.
\end{align*}
where  $ {\bf
\bar{u}}_{i.} $ is  the $i$-th row of  ${\bar
{ \bf{U}}}={\bf A}({\bf W}{\bf A})^{k} $  and ${\rm {\bf \bar{v}}}_{.j} $ is the $j$-th column of  ${\bf \bar{V}}=({\bf A}{\bf W})^{k}{\bf A} $.
\end{cor}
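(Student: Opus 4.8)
The plan is to mirror the quaternion argument of Lemma~\ref{theor:det_rep_wdrazh} in the complex setting, where the Hermitian hypotheses become superfluous. The reason is twofold: for a matrix with commuting (complex) entries both the row and the column determinant reduce to the ordinary determinant (by the cycle-sign description in \cite{kyr2}, since $(-1)^{n-r}$ is the sign of the corresponding permutation and the entry product no longer depends on ordering), so the symbols ${\rm{cdet}}$ and ${\rm{rdet}}$ in \eqref{eq:dr_rep_wcdet}--\eqref{eq:dr_rep_wrdet} collapse to $|\cdot|$; and the Hermitian requirement was imposed in the quaternion case only to make the noncommutative determinant of $({\bf A}{\bf W})^{k+2}$ single-valued, which over $\mathbb{C}$ is automatic. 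Thus only the underlying Cramer-type derivation has to be reproduced over $\mathbb{C}$.

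First I would record the consistency identities behind both formulas. From \eqref{eq:WDr_dr}, ${\bf A}_{d,W}=\bigl(({\bf A}{\bf W})^{d}\bigr)^{2}{\bf A}={\bf A}\bigl(({\bf W}{\bf A})^{d}\bigr)^{2}$, and using that ${\bf A}{\bf W}({\bf A}{\bf W})^{d}$ is idempotent and fixes $\mathcal{R}\bigl(({\bf A}{\bf W})^{k}\bigr)$ pointwise, one obtains
\begin{equation*}
({\bf A}{\bf W})^{k+2}{\bf A}_{d,W}=({\bf A}{\bf W})^{k}{\bf A}={\bf \bar{V}},\qquad {\bf A}_{d,W}({\bf W}{\bf A})^{k+2}={\bf A}({\bf W}{\bf A})^{k}={\bf \bar{U}}.
\end{equation*}
Because $k\ge\Ind({\bf A}{\bf W})$, the matrix $({\bf A}{\bf W})^{k+2}$ has index at most one, so its group inverse exists, and $\mathcal{R}({\bf A}_{d,W})\subseteq\mathcal{R}\bigl(({\bf A}{\bf W})^{d}\bigr)=\mathcal{R}\bigl(({\bf A}{\bf W})^{k+2}\bigr)$. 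Hence ${\bf A}_{d,W}=\bigl(({\bf A}{\bf W})^{k+2}\bigr)^{\#}{\bf \bar{V}}$ is the unique solution of the singular system $({\bf A}{\bf W})^{k+2}{\bf X}={\bf \bar{V}}$ whose columns lie in $\mathcal{R}\bigl(({\bf A}{\bf W})^{k+2}\bigr)$, and symmetrically ${\bf A}_{d,W}={\bf \bar{U}}\bigl(({\bf W}{\bf A})^{k+2}\bigr)^{\#}$.

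Next I would apply the generalized Cramer rule for the group inverse of a complex matrix: for square ${\bf M}\in\mathbb{C}^{p\times p}$ of rank $r$ and index at most one, the group-inverse solution of ${\bf M}{\bf x}={\bf b}$ with ${\bf b}\in\mathcal{R}({\bf M})$ has $i$-th entry $\bigl(\sum_{\beta\in J_{r,p}\{i\}}|{\bf M}_{.\,i}({\bf b})|_{\beta}^{\beta}\bigr)\big/\bigl(\sum_{\beta\in J_{r,p}}|{\bf M}|_{\beta}^{\beta}\bigr)$, the denominator being the sum of principal minors of order $r$. Reading this column-wise for ${\bf M}=({\bf A}{\bf W})^{k+2}$ and ${\bf b}={\bf \bar{v}}_{.\,j}$ yields the first displayed expression, while the transposed (row) version for ${\bf M}=({\bf W}{\bf A})^{k+2}$ and ${\bf \bar{u}}_{i.}$ yields the second; the common value $r$ is legitimate since $({\bf A}{\bf W})^{k}$ and $({\bf W}{\bf A})^{k}$ have equal rank.

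The main obstacle is the generalized Cramer rule itself, which over $\mathbb{C}$ cannot rely on the Hermitian-determinant theory of \cite{kyr3}. I would establish it through the core--nilpotent decomposition of $({\bf A}{\bf W})^{k+2}$: restricting to the range reduces the singular system to an invertible block on which the ordinary Cramer rule applies, and the delicate step is to match the quotient of order-$r$ minors of that block with the quotient of sums of principal minors of $({\bf A}{\bf W})^{k+2}$, thereby confirming that the range/uniqueness condition selecting ${\bf A}_{d,W}$ is precisely the one encoded by summing over principal minors.
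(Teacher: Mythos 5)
Your algebraic reduction is correct as far as it goes: the identities $({\bf A}{\bf W})^{k+2}{\bf A}_{d,W}=({\bf A}{\bf W})^{k}{\bf A}={\bf \bar{V}}$ and ${\bf A}_{d,W}({\bf W}{\bf A})^{k+2}={\bf A}({\bf W}{\bf A})^{k}={\bf \bar{U}}$ hold, the columns (respectively rows) of ${\bf A}_{d,W}$ do lie in $\mathcal{R}\bigl(({\bf A}{\bf W})^{k+2}\bigr)$ (respectively in the row space of $({\bf W}{\bf A})^{k+2}$), so ${\bf A}_{d,W}=\bigl(({\bf A}{\bf W})^{k+2}\bigr)^{\#}{\bf \bar{V}}={\bf \bar{U}}\bigl(({\bf W}{\bf A})^{k+2}\bigr)^{\#}$, and $\rk({\bf A}{\bf W})^{k}=\rk({\bf W}{\bf A})^{k}$ does justify the common $r$. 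Your opening observation that over $\mathbb{C}$ the row and column determinants collapse to the ordinary determinant, so that the Hermicity hypotheses of Lemma \ref{theor:det_rep_wdrazh} can be dropped, is also the right way to situate the corollary. Bear in mind, though, that this paper quotes the corollary from \cite{kyr_nov} without proof, so the comparison is with the technique of that source (and of the analogous proofs in this paper), which is not the one you follow.

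The genuine gap is that the entire determinantal content of the statement sits inside your ``generalized Cramer rule'' for the group-inverse solution, and you do not prove it: you only announce that it would follow from the core--nilpotent decomposition, and you yourself flag the ``delicate step'' of matching the minors of the invertible block with the sums of principal minors of $({\bf A}{\bf W})^{k+2}$. That matching \emph{is} the theorem; the passage from ${\bf A}_{d,W}$ to $\bigl(({\bf A}{\bf W})^{k+2}\bigr)^{\#}$ is routine by comparison. Moreover, the decomposition route is harder than your sketch suggests, because principal minors are not invariant under the similarity that produces the core--nilpotent form, so the block computation does not transfer entry-wise without heavy Cauchy--Binet bookkeeping. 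The standard way to close the gap --- and the way \cite{kyr_nov} and the parallel complex Drazin-inverse representation in \cite{kyr1} proceed --- is the limit representation
\begin{equation*}
{\bf A}_{d,W}=\lim_{\lambda\to 0}\bigl(\lambda {\bf I}_{m}+({\bf A}{\bf W})^{k+2}\bigr)^{-1}({\bf A}{\bf W})^{k}{\bf A},
\end{equation*}
followed by the classical Cramer rule for the nonsingular matrix $\lambda {\bf I}_{m}+({\bf A}{\bf W})^{k+2}$ and the expansion of both the numerator determinants and $\det\bigl(\lambda {\bf I}_{m}+({\bf A}{\bf W})^{k+2}\bigr)$ as polynomials in $\lambda$ whose coefficients are sums of minors of fixed order; since ${\bf \bar{v}}_{.j}\in\mathcal{R}\bigl(({\bf A}{\bf W})^{k+2}\bigr)$, all minors of order greater than $r$ vanish in both, and letting $\lambda\to 0$ leaves exactly the quotient of sums of order-$r$ minors in the corollary (the row-determinant formula is obtained symmetrically from $({\bf W}{\bf A})^{k+2}$). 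This limit argument in fact proves, in one stroke, precisely the Cramer rule you postulate for ${\bf M}{\bf x}={\bf b}$ with ${\bf b}\in\mathcal{R}({\bf M})$ and $\Ind{\bf M}\le 1$; without it, your proposal establishes the reduction but none of the determinantal formulas.
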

Quaternion column-vectors  form a right  vector $\mathbb{H}$-space
with quaternion-scalar right-multiplying,  and quaternion row-vectors  form a left  vector $\mathbb{H}$-space with quaternion-scalar left-multiplying. We denote them  by $\mathcal{H}_{r}$ and $\mathcal{H}_{l}$, respectively. Moreover, $\mathcal{H}_{r}$ and $\mathcal{H}_{l}$ possess corresponding $\mathbb{H}$-valued inner products by putting
 \begin{align*}\langle \mathbf{x},\mathbf{y}\rangle_{r}=&\overline{y}_1x_{1}+\cdots+\overline{y}_{n}x_{n}\,\,\mbox{for}\,\, \mathbf{x}=\left(x_{i}\right)_{i=1}^{n}, \mathbf{y}=\left(y_{i}\right)_{i=1}^{n}\in \mathcal{H}_{r},\\
\langle \mathbf{x},\mathbf{y}\rangle_{l}=&x_{1}\overline{y}_1+\cdots+x_{n}\overline{y}_{n}\,\, \mbox{for}\,\, \mathbf{x}, \mathbf{y}\in \mathcal{H}_{l},\end{align*} that satisfy the  inner product relations, namely, conjugate symmetry, linearity, and positive-definiteness but with specialties
 \begin{align*}\langle \mathbf{x}\alpha+\mathbf{y}\beta,\mathbf{z} \rangle=\langle \mathbf{x},\mathbf{z}\rangle\alpha+\langle \mathbf{y},\mathbf{z}\rangle\beta \,\,\mbox{ when}\,\, \mathbf{x}, \mathbf{y}, \mathbf{z} \in \mathcal{H}_{r},\\
          \langle \alpha\mathbf{x}+\beta\mathbf{y},\mathbf{z} \rangle=\alpha\langle \mathbf{x},\mathbf{z}\rangle+\beta\langle \mathbf{y},\mathbf{z}\rangle \,\, \mbox{ when}\,\, \mathbf{x}, \mathbf{y}, \mathbf{z} \in \mathcal{H}_{l},\end{align*}
 for any $\alpha, \beta \in {\mathbb{H}}$.

So, an arbitrary quaternion matrix induct right and left vector  $\mathbb{H}$-spaces that introduced  by the following definition.
\begin{defn}
For an arbitrary matrix over the quaternion skew field, ${\bf A}\in  {\mathbb{H}}^{m\times n}$, we denote by
\begin{itemize}
  \item  $\mathcal{C}_{r}({\rm {\bf A}})=\{ {\bf y}\in {\mathbb{H}}^{m\times 1} : {\bf y} = {\bf A}{\bf x},\,  {\bf x} \in {\mathbb{H}}^{n\times 1}\},$  the right column  space of ${\bf A}$,
  \item  $\mathcal{N}_{r}({\rm {\bf A}})=\{ {\bf x}\in {\mathbb{H}}^{n\times 1} : \,\, {\bf A}{\bf x}=0\}$,  the right null space of  ${\bf A}$,
  \item $\mathcal{R}_{l}({\rm {\bf A}})=\{ {\bf y}\in {\mathbb{H}}^{1\times n} : \,\,{\bf y} = {\bf x}{\bf A},\,\,  {\bf x} \in {\mathbb{H}}^{1\times m}\}$, the  left row space of ${\bf A}$,
  \item  $\mathcal{N}_{l}({\rm {\bf A}})=\{ {\bf x}\in {\mathbb{H}}^{1\times m} : \,\, {\bf x}{\bf A}=0\}$,  the left null space of  ${\bf A}$.
\end{itemize}
\end{defn}

\subsection{Determinantal representations of the core inverses and the core-EP inverses}

Because of  quaternion noncommutativity, Definition \ref{def:cor}  can be expand to matrices over ${\mathbb{H}}$ as follows.
\begin{defn}
A matrix ${\bf X} \in  {\mathbb{H}}^{n\times n}$ is said  to be \emph{the right core inverse} of ${\bf A}  \in  {\mathbb{H}}^{n\times n}$
if it satisfies the conditions
$$
{\bf A}{\bf X}={\bf P}_A,~and~ \mathcal{C}_{r}({\bf X})=\mathcal{C}_{r}({\bf A}).$$
When such matrix ${\bf X}$ exists, it is denoted $ \corer{\bf A}$.
\end{defn}
\begin{defn}\label{def:lcor}
A matrix ${\bf X} \in  {\mathbb{H}}^{n\times n}$  is said  to be \emph{the left core inverse} of $ {\bf A} \in  {\mathbb{H}}^{n\times n}$
if it satisfies the conditions
$$
{\bf X}{\bf A}={\bf Q}_A,~and~ \mathcal{R}_l({\bf X})=\mathcal{R}_l({\bf A}).$$
When such matrix ${\bf X}$ exists, it is denoted $ \corel{\bf A}$.
\end{defn}

Similar as in \cite{pras}, we introduce two  core-EP inverses over quaternion skew field.
\begin{defn}
A matrix ${\bf X}\in  {\mathbb{H}}^{n\times n}$ is said to be \emph{the right core-EP inverse} of $ {\bf A}\in  {\mathbb{H}}^{n\times n}$
if it satisfies the conditions
$$
{\bf X}{\bf A}{\bf X}={\bf A},~and~ \mathcal{C}_{r}({\bf X})=\mathcal{C}_{r}({\bf X}^*)=\mathcal{C}_{r}({\bf A}^d).$$
It is denoted $ \coepr{\bf A}$.
\end{defn}

The lemma below give characterization of the right core-EP inverse.
Due to \cite{pras}, the right weighted core-EP inverse is characterized in terms
of three equations.
\begin{lem}Let ${\bf A},~{\bf X}\in  {\mathbb{H}}^{n\times n}$ be such that $\Ind({\bf A}) = k$. Then ${\bf X}$ is the right core-EP inverse
of ${\bf A}$ if and only if ${\bf X}$ satisfies the conditions:
\begin{align*}{\bf X}{\bf A}^{k+1}= {\bf
         A}^{k},~ {\bf
A}{\bf X}^2  = {\bf X},~ \left( {\bf A}{\bf X} \right)^{ *}  =
{\bf A}{\bf X}~ \text{and}~~ \mathcal{C}_{r}({\bf X})\subseteq\mathcal{C}_{r}({\bf A}^k).\end{align*}
\end{lem}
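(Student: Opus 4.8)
The plan is to prove the two implications separately, after recording one identity and two elementary facts over $\mathbb{H}$ that I would use repeatedly. The identity is $\mathcal{C}_{r}(\mathbf{A}^{d})=\mathcal{C}_{r}(\mathbf{A}^{k})$, which I would deduce from $\mathbf{A}\mathbf{A}^{d}=\mathbf{A}^{d}\mathbf{A}$, $\mathbf{A}^{d}=\mathbf{A}(\mathbf{A}^{d})^{2}$ and $\mathbf{A}^{d}\mathbf{A}^{k+1}=\mathbf{A}^{k}$ by exhibiting both inclusions through $\mathbf{A}^{d}=(\mathbf{A}^{d})^{k+1}\mathbf{A}^{k}$ and $\mathbf{A}^{k}=\mathbf{A}^{d}\mathbf{A}^{k+1}$. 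The two facts are: an idempotent $\mathbf{E}\in\mathbb{H}^{n\times n}$ is Hermitian if and only if $\mathcal{C}_{r}(\mathbf{E})=\mathcal{C}_{r}(\mathbf{E}^{*})$, and a column-space inclusion $\mathcal{C}_{r}(\mathbf{M})\subseteq\mathcal{C}_{r}(\mathbf{N})$ combined with $\rk\mathbf{M}=\rk\mathbf{N}$ forces equality; both rest on the $\mathbb{H}$-valued inner product on $\mathcal{H}_{r}$ and on $\rk\mathbf{A}=\rk(\mathbf{A}^{*}\mathbf{A})$.

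For necessity I would assume $\mathbf{X}=\coepr{\mathbf{A}}$, so $\mathbf{X}\mathbf{A}\mathbf{X}=\mathbf{X}$ and $\mathcal{C}_{r}(\mathbf{X})=\mathcal{C}_{r}(\mathbf{X}^{*})=\mathcal{C}_{r}(\mathbf{A}^{k})$; the required inclusion $\mathcal{C}_{r}(\mathbf{X})\subseteq\mathcal{C}_{r}(\mathbf{A}^{k})$ is then part of the hypothesis. Setting $\mathbf{E}:=\mathbf{A}\mathbf{X}$, the outer-inverse equation makes $\mathbf{E}$ idempotent with $\rk\mathbf{E}=\rk\mathbf{X}$; writing $\mathbf{X}=\mathbf{A}^{k}\mathbf{C}$ for some $\mathbf{C}\in\mathbb{H}^{n\times n}$ gives $\mathcal{C}_{r}(\mathbf{E})=\mathcal{C}_{r}(\mathbf{A}^{k})$, while conjugate-transposing $\mathbf{X}=\mathbf{X}\mathbf{E}$ (which holds since $\mathbf{X}\mathbf{E}=\mathbf{X}\mathbf{A}\mathbf{X}=\mathbf{X}$) gives $\mathcal{C}_{r}(\mathbf{E}^{*})=\mathcal{C}_{r}(\mathbf{X}^{*})=\mathcal{C}_{r}(\mathbf{A}^{k})$, so by the first fact $\mathbf{E}$ is Hermitian, i.e. $(\mathbf{A}\mathbf{X})^{*}=\mathbf{A}\mathbf{X}$. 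Because $\mathbf{E}$ is then the orthogonal projector onto $\mathcal{C}_{r}(\mathbf{A}^{k})$, it fixes the columns of $\mathbf{X}$, yielding $\mathbf{A}\mathbf{X}^{2}=\mathbf{E}\mathbf{X}=\mathbf{X}$; and arguing identically with the idempotent $\mathbf{F}:=\mathbf{X}\mathbf{A}$, whose right column space is again $\mathcal{C}_{r}(\mathbf{A}^{k})$, I would get $\mathbf{X}\mathbf{A}^{k+1}=\mathbf{F}\mathbf{A}^{k}=\mathbf{A}^{k}$.

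For sufficiency I would start from the four conditions and first iterate $\mathbf{A}\mathbf{X}^{2}=\mathbf{X}$ to obtain $\mathbf{X}=\mathbf{A}^{k}\mathbf{X}^{k+1}$; substituting this into the last factor of $\mathbf{X}\mathbf{A}\mathbf{X}$ and using $\mathbf{X}\mathbf{A}^{k+1}=\mathbf{A}^{k}$ gives $\mathbf{X}\mathbf{A}\mathbf{X}=(\mathbf{X}\mathbf{A}^{k+1})\mathbf{X}^{k+1}=\mathbf{A}^{k}\mathbf{X}^{k+1}=\mathbf{X}$, so $\mathbf{X}$ is an outer inverse. The relation $\mathbf{X}\mathbf{A}^{k+1}=\mathbf{A}^{k}$ gives $\mathcal{C}_{r}(\mathbf{A}^{k})\subseteq\mathcal{C}_{r}(\mathbf{X})$, which together with the hypothesis $\mathcal{C}_{r}(\mathbf{X})\subseteq\mathcal{C}_{r}(\mathbf{A}^{k})$ and the identity of the first paragraph yields $\mathcal{C}_{r}(\mathbf{X})=\mathcal{C}_{r}(\mathbf{A}^{k})=\mathcal{C}_{r}(\mathbf{A}^{d})$. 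Finally, $\mathbf{A}\mathbf{X}$ is now an idempotent that is Hermitian by hypothesis, hence an orthogonal projector with $\mathcal{C}_{r}(\mathbf{A}\mathbf{X})=\mathcal{C}_{r}(\mathbf{X})=\mathcal{C}_{r}(\mathbf{A}^{k})$; comparing ranks in $(\mathbf{A}\mathbf{X})^{*}=\mathbf{X}^{*}\mathbf{A}^{*}$ then gives $\mathcal{C}_{r}(\mathbf{X}^{*})=\mathcal{C}_{r}(\mathbf{X}^{*}\mathbf{A}^{*})=\mathcal{C}_{r}(\mathbf{A}^{k})$, which finishes the equivalence.

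The hard part will be the faithful transfer of the projector and rank machinery to the noncommutative setting: the claims that a Hermitian idempotent is an orthogonal projector, that it fixes precisely its right column space, and that a column-space inclusion together with equal rank forces equality, must be justified directly from the $\mathbb{H}$-inner product on $\mathcal{H}_{r}$ and from $\rk\mathbf{A}=\rk(\mathbf{A}^{*}\mathbf{A})$ rather than quoted from the complex theory. Once these are secured, every remaining step is an elementary manipulation of the defining equations.
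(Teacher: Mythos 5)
Your proof is correct, but there is no proof in the paper to compare it with: the lemma is stated without argument, introduced only by the sentence that, due to \cite{pras}, the right core-EP inverse is characterized in terms of three equations --- that is, the complex-case characterization of Prasad and Mohana is imported and tacitly extended to ${\mathbb{H}}$. Your self-contained derivation therefore supplies what the paper omits, and it isolates exactly the two facts that need re-proving in the noncommutative setting: (a) an idempotent ${\bf E}$ over ${\mathbb{H}}$ with $\mathcal{C}_{r}({\bf E})=\mathcal{C}_{r}({\bf E}^{*})$ is Hermitian, and (b) an inclusion of right column spaces together with equality of ranks is an equality. Both hold, and your sketches are the right ones: for (a) a single inclusion already suffices, since $\mathcal{C}_{r}({\bf E}^{*})\subseteq\mathcal{C}_{r}({\bf E})$ gives ${\bf E}^{*}={\bf E}{\bf M}$ for some ${\bf M}$, hence ${\bf E}^{*}={\bf E}{\bf E}^{*}$, and ${\bf E}{\bf E}^{*}$ is Hermitian, whence ${\bf E}={\bf E}^{*}$; (b) is dimension counting for right subspaces of ${\mathbb{H}}^{n\times 1}$, legitimate because ${\mathbb{H}}$ is a division ring and the paper's determinantal rank coincides with the dimension of the right column space. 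One thing you did silently should be said aloud: the paper's definition of the right core-EP inverse reads ${\bf X}{\bf A}{\bf X}={\bf A}$, which must be the typo ${\bf X}{\bf A}{\bf X}={\bf X}$; under the literal reading the lemma is false (for a nonzero nilpotent ${\bf A}$ the range conditions force ${\bf X}={\bf 0}$, which satisfies the four displayed conditions but not ${\bf X}{\bf A}{\bf X}={\bf A}$), so the outer-inverse equation you use is the only reading under which the equivalence holds, and it is the one consistent with \cite{pras}. Finally, two of your steps can be stated more cheaply: in the necessity direction, Hermiticity of ${\bf A}{\bf X}$ is not needed to obtain ${\bf A}{\bf X}^{2}={\bf X}$, since idempotency of ${\bf A}{\bf X}$ together with $\mathcal{C}_{r}({\bf X})\subseteq\mathcal{C}_{r}({\bf A}{\bf X})$ already fixes the columns of ${\bf X}$; and in the sufficiency direction the asserted equality $\mathcal{C}_{r}({\bf A}{\bf X})=\mathcal{C}_{r}({\bf X})$ follows from ${\bf X}={\bf A}{\bf X}^{2}=({\bf A}{\bf X}){\bf X}$ combined with fact (b), both of which you have at hand.
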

Taking to account  (\cite{gao}, Theorem 2.3), the following expression can be extend to quaternion matrices.
\begin{lem} Let $ {\bf A}\in{\mathbb{H}}^{n\times n}$ and let $l$ be a non-negative integer such that
$l\geq k = \Ind({\bf A})$. Then $ {\coepr{\bf A}}={\bf A}^d{\bf A}^l\left({\bf A}^l\right)^{\dag}$.
\end{lem}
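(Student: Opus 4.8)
The plan is to verify that the candidate matrix $\mathbf{X} := \mathbf{A}^d\mathbf{A}^l(\mathbf{A}^l)^{\dag}$ satisfies the four characterizing conditions of the immediately preceding lemma, namely $\mathbf{X}\mathbf{A}^{k+1}=\mathbf{A}^k$, $\mathbf{A}\mathbf{X}^2=\mathbf{X}$, $(\mathbf{A}\mathbf{X})^{*}=\mathbf{A}\mathbf{X}$ and $\mathcal{C}_{r}(\mathbf{X})\subseteq\mathcal{C}_{r}(\mathbf{A}^k)$; by the ``if'' direction of that lemma this forces $\mathbf{X}=\coepr{\mathbf{A}}$. Before computing I would record three elementary facts valid for every $l\geq k=\Ind(\mathbf{A})$: writing $\mathbf{E}:=\mathbf{A}\mathbf{A}^d=\mathbf{A}^d\mathbf{A}$ for the spectral idempotent, one has $\mathbf{E}\mathbf{A}^m=\mathbf{A}^m$ for all $m\geq k$ (from $\mathbf{A}^d\mathbf{A}^{k+1}=\mathbf{A}^k$ together with the commuting of $\mathbf{A}$ and $\mathbf{A}^d$); the right column spaces coincide, $\mathcal{C}_{r}(\mathbf{A}^d)=\mathcal{C}_{r}(\mathbf{A}^k)=\mathcal{C}_{r}(\mathbf{A}^l)$; and the orthogonal projector $\mathbf{P}:=\mathbf{A}^l(\mathbf{A}^l)^{\dag}$ onto $\mathcal{C}_{r}(\mathbf{A}^l)$ fixes every matrix whose columns lie in $\mathcal{C}_{r}(\mathbf{A}^l)$, i.e.\ $\mathbf{P}\mathbf{B}=\mathbf{B}$ whenever $\mathcal{C}_{r}(\mathbf{B})\subseteq\mathcal{C}_{r}(\mathbf{A}^l)$ (a consequence of the first Penrose equation $\mathbf{A}^l(\mathbf{A}^l)^{\dag}\mathbf{A}^l=\mathbf{A}^l$).

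Granting these, the four verifications are short. First I would compute $\mathbf{A}\mathbf{X}=\mathbf{A}\mathbf{A}^d\mathbf{A}^l(\mathbf{A}^l)^{\dag}=\mathbf{E}\mathbf{A}^l(\mathbf{A}^l)^{\dag}=\mathbf{A}^l(\mathbf{A}^l)^{\dag}=\mathbf{P}$; since $\mathbf{P}$ is an orthogonal projector it is Hermitian, so $(\mathbf{A}\mathbf{X})^{*}=\mathbf{A}\mathbf{X}$ holds at once. For the first condition, $\mathcal{C}_{r}(\mathbf{A}^{k+1})=\mathcal{C}_{r}(\mathbf{A}^l)$ means the projector absorbs $\mathbf{A}^{k+1}$, whence $\mathbf{X}\mathbf{A}^{k+1}=\mathbf{A}^d\mathbf{P}\mathbf{A}^{k+1}=\mathbf{A}^d\mathbf{A}^{k+1}=\mathbf{A}^k$. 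For the second, the columns of $\mathbf{A}^d$ lie in $\mathcal{C}_{r}(\mathbf{A}^d)=\mathcal{C}_{r}(\mathbf{A}^l)$, so $\mathbf{P}\mathbf{A}^d=\mathbf{A}^d$ and therefore $\mathbf{A}\mathbf{X}^2=(\mathbf{A}\mathbf{X})\mathbf{X}=\mathbf{P}\mathbf{A}^d\mathbf{P}=\mathbf{A}^d\mathbf{P}=\mathbf{X}$. Finally $\mathbf{X}=\mathbf{A}^d\mathbf{P}$ has every column in $\mathcal{C}_{r}(\mathbf{A}^d)=\mathcal{C}_{r}(\mathbf{A}^k)$, which is the required range inclusion.

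The algebra above is genuinely easy; the substantive point, and the place I expect to have to argue carefully, is the justification of the three preliminary facts over the noncommutative field $\mathbb{H}$. The identity $\mathbf{E}\mathbf{A}^m=\mathbf{A}^m$ and the projector-absorption property transfer verbatim from the commutative setting, since they use only the Drazin equations of Definition~\ref{def:wdr} (with $\mathbf{W}=\mathbf{I}_n$) and the Penrose equations of Definition~\ref{def:mp}, all available over $\mathbb{H}$. The equality $\mathcal{C}_{r}(\mathbf{A}^d)=\mathcal{C}_{r}(\mathbf{A}^k)$ is the one I would spell out: the inclusion ``$\subseteq$'' follows from $\mathbf{A}^d=\mathbf{A}^k(\mathbf{A}^d)^{k+1}$ (obtained by iterating $\mathbf{A}^d=\mathbf{A}(\mathbf{A}^d)^2$), while ``$\supseteq$'' follows from $\mathbf{A}^k=\mathbf{A}^d\mathbf{A}^{k+1}$; and $\mathcal{C}_{r}(\mathbf{A}^l)=\mathcal{C}_{r}(\mathbf{A}^k)$ comes from the stabilization of the descending chain of right column spaces once $\rk(\mathbf{A}^{k+1})=\rk(\mathbf{A}^k)$. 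Because every one of these statements is phrased through right column spaces and right multiplication, it is compatible with the right $\mathbb{H}$-module structure and requires no reordering of quaternion scalars, so the noncommutativity presents no real obstruction.
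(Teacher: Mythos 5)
Your proposal is correct, but it does not follow the paper, because the paper contains no proof of this lemma at all: it is stated with only the remark that Theorem 2.3 of Gao and Chen \cite{gao}, proved for pseudo core inverses in rings with involution, ``can be extended'' to quaternion matrices. Your argument is therefore a genuine, self-contained replacement for that citation: you verify the four conditions of the paper's preceding characterization lemma ($\mathbf{X}\mathbf{A}^{k+1}=\mathbf{A}^{k}$, $\mathbf{A}\mathbf{X}^{2}=\mathbf{X}$, $(\mathbf{A}\mathbf{X})^{*}=\mathbf{A}\mathbf{X}$, $\mathcal{C}_{r}(\mathbf{X})\subseteq\mathcal{C}_{r}(\mathbf{A}^{k})$) for $\mathbf{X}=\mathbf{A}^{d}\mathbf{A}^{l}(\mathbf{A}^{l})^{\dag}$, and each verification, as well as your three preliminary facts, holds over $\mathbb{H}$ exactly as you argue, since everything reduces to the Drazin equations (Definition~1.1 with $\mathbf{W}=\mathbf{I}_{n}$), the Penrose equations, and right multiplication. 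Comparing the two routes: the paper's citation is shorter and implicitly rests on $\mathbb{H}^{n\times n}$ being a ring with involution, so the ring-theoretic theorem applies, but it leaves the reader to check that transfer; your verification makes the transfer explicit and isolates precisely which structure is used, which is the check the paper omits. One small streamlining is available to you: instead of invoking stabilization of the descending chain of right column spaces (which needs the dimension theory of right $\mathbb{H}$-modules), the equality $\mathcal{C}_{r}(\mathbf{A}^{k})=\mathcal{C}_{r}(\mathbf{A}^{l})$ follows at once from the two identities $\mathbf{A}^{k}=(\mathbf{A}^{d})^{l-k}\mathbf{A}^{l}$ and $\mathbf{A}^{l}=\mathbf{A}^{k}\mathbf{A}^{l-k}$, keeping the entire proof inside the Drazin calculus you already employ.
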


\begin{defn}\label{def:lepcor}
A matrix ${\bf X}\in  {\mathbb{H}}^{n\times n}$ is said to be  \emph{the left core-EP inverse} of ${\bf A}\in  {\mathbb{H}}^{n\times n}$
if it satisfies the conditions
$$
{\bf X}{\bf A}{\bf X}={\bf A},~and~ \mathcal{R}_l({\bf X})=\mathcal{R}_l({\bf X}^*)=\mathcal{R}_l({\bf A}^d).$$
It is denoted $ \coepl{\bf A}$.
\end{defn}
\begin{rem}Since $\mathcal{C}_{r}(({\bf A}^*)^d)=\mathcal{R}_l({\bf A}^d)$, then the left core inverse $ \coepl{\bf A}$ of  $ {\bf A}\in{\mathbb{C}}^{n\times n}$ is similar to  the  $*$core inverse introduced in \cite{pras}, and  the dual core-EP inverse introduced in \cite{zh_arx}.
\end{rem}
Similarly, we have   the following characterization of the left core-EP inverse.
\begin{thm}\label{th1:lcep}Let ${\bf X},~ {\bf A}\in{\mathbb{H}}^{n\times n}$ and let $l$ be a non-negative integer such that
$l\geq k = \Ind({\bf A})$. The following statements are
equivalent:
\begin{itemize}
  \item [(i)]  ${\bf X}$ is the left core-EP inverse of $ {\bf A}$.
  \item [(ii)] ${\bf A}^{k+1}{\bf X}= {\bf
         A}^{k},~{\bf X}^2  {\bf
A}  = {\bf X},~ \left( {\bf X}{\bf A} \right)^{ *}  =
{\bf X}{\bf A}~ \text{and}~~ \mathcal{R}_{l}({\bf X})\subseteq\mathcal{R}_{l}({\bf A}^k)$.
\item [(iii)] $ {\bf X}={\coepl{\bf A}}=\left({\bf A}^l\right)^{\dag}{\bf A}^l{\bf A}^d$.

\end{itemize}
\end{thm}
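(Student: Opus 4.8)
The strategy is to establish the cycle of implications $(i)\Rightarrow(iii)\Rightarrow(ii)\Rightarrow(i)$, exploiting the fact that the left core-EP inverse is, up to conjugate transpose, the right core-EP inverse of $\mathbf{A}^*$. The key observation I would record first is the duality $\mathcal{C}_r((\mathbf{B})^d)=\mathcal{R}_l((\mathbf{B}^*)^d)$ together with $\mathcal{C}_r(\mathbf{B})=\mathcal{R}_l(\mathbf{B}^*)$, so that $\mathbf{X}$ is the left core-EP inverse of $\mathbf{A}$ if and only if $\mathbf{X}^*$ is the right core-EP inverse of $\mathbf{A}^*$. This lets me transplant the already-proven right-sided lemma (the three-equation characterization and the formula $\coepr{\mathbf{A}}=\mathbf{A}^d\mathbf{A}^l(\mathbf{A}^l)^\dagger$) instead of redoing each verification from scratch.

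For $(i)\Rightarrow(iii)$ I would apply the right-sided formula to $\mathbf{A}^*$: from $\mathbf{X}^*=\coepr{(\mathbf{A}^*)}=(\mathbf{A}^*)^d(\mathbf{A}^*)^l((\mathbf{A}^*)^l)^\dagger$, taking conjugate transposes and using $(\mathbf{A}^*)^d=(\mathbf{A}^d)^*$, $(\mathbf{A}^*)^l=(\mathbf{A}^l)^*$, and the Moore-Penrose identity $((\mathbf{B})^*)^\dagger=(\mathbf{B}^\dagger)^*$, gives $\mathbf{X}=\bigl(\mathbf{A}^l\bigr)^{\dagger}\mathbf{A}^l\mathbf{A}^d$. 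The implication $(iii)\Rightarrow(ii)$ is a direct computation: starting from the explicit expression for $\mathbf{X}$, I would verify each of the three algebraic equations and the range inclusion. The equation $\mathbf{A}^{k+1}\mathbf{X}=\mathbf{A}^k$ follows from $\mathbf{A}^{k+1}(\mathbf{A}^l)^\dagger\mathbf{A}^l\mathbf{A}^d$ by commuting $\mathbf{A}$ past the projector $(\mathbf{A}^l)^\dagger\mathbf{A}^l=\mathbf{Q}_{\mathbf{A}^l}$ using $\mathcal{R}_l(\mathbf{A}^{k+1})=\mathcal{R}_l(\mathbf{A}^l)$ for $l\ge k$, and then $\mathbf{A}^{k+1}\mathbf{A}^d=\mathbf{A}^k$; Hermitianness of $\mathbf{X}\mathbf{A}$ comes from $\mathbf{X}\mathbf{A}$ reducing to a product involving the Hermitian projector $\mathbf{Q}_{\mathbf{A}^l}$.

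The implication $(ii)\Rightarrow(i)$ is where I expect the main work. Here I must show that the three equations plus $\mathcal{R}_l(\mathbf{X})\subseteq\mathcal{R}_l(\mathbf{A}^k)$ force $\mathbf{X}$ to coincide with the unique matrix satisfying $\mathbf{X}\mathbf{A}\mathbf{X}=\mathbf{A}$ and the three range conditions $\mathcal{R}_l(\mathbf{X})=\mathcal{R}_l(\mathbf{X}^*)=\mathcal{R}_l(\mathbf{A}^d)$ of Definition~\ref{def:lepcor}. The plan is to dualize the right-sided argument: from $\mathbf{A}^{k+1}\mathbf{X}=\mathbf{A}^k$ and $\mathbf{X}^2\mathbf{A}=\mathbf{X}$ one derives that $\mathbf{X}\mathbf{A}$ is idempotent, and together with $(\mathbf{X}\mathbf{A})^*=\mathbf{X}\mathbf{A}$ it is the orthogonal projector onto $\mathcal{R}_l(\mathbf{X}\mathbf{A})$; combined with the two range statements one identifies $\mathbf{X}\mathbf{A}=\mathbf{Q}_{\mathbf{A}^k}$ and recovers $\mathbf{X}\mathbf{A}\mathbf{X}=\mathbf{A}$. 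The delicate point is establishing uniqueness and the range equalities over $\mathbb{H}$ rather than merely the inclusion in $(ii)$; I would handle this by passing to $\mathbf{A}^*$ and invoking the uniqueness already guaranteed for the right core-EP inverse, being careful that all range identities are stated in the \emph{left} row space $\mathcal{R}_l$ and that scalar multiplications respect the one-sided $\mathbb{H}$-linearity of $\mathcal{H}_l$ recorded in the preliminaries.
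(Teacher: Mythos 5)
You are correct, and your route is essentially the paper's own: the paper offers no explicit proof of this theorem---it is introduced with ``Similarly'' and justified by the left--right duality $(\coepr{\bf A})^*=\coepl{({\bf A}^*)}$ recorded immediately after the statement---and that conjugate-transpose transfer from the right-sided three-equation lemma together with the formula $\coepr{\bf A}={\bf A}^d{\bf A}^l\left({\bf A}^l\right)^{\dag}$ is exactly what your plan executes. Your filled-in verifications (including your repetition of the defining equation ${\bf X}{\bf A}{\bf X}={\bf A}$, which is the paper's own typo for ${\bf X}{\bf A}{\bf X}={\bf X}$) are a faithful elaboration of that same argument, so there is nothing methodologically different to compare.
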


Thanks to \cite{gao}, there exists the simple relation between the left and right core-EP inverses, $({\coepr{\bf A}})^*={({\bf A}^*)_{\tiny{\textcircled{\tiny\dag}}}}$.  So, it is enough to investigate the left  core-EP inverse,
and right core-EP inverse case can be investigated analogously. But in \cite{ky_cor},  we gave separately determinantal representations of both core-EP inverses.

\begin{thm}\cite{ky_cor}\label{th:lcep}Suppose ${\bf A}\in  {\mathbb{H}}^{n\times n}$, $\Ind {\bf A}=k$,  $\rk {\bf A}^k=s$, and there exist $\coepr{\bf A}$ and $\coepl{\bf A}$. Then ${\coepr{\bf A}}=\left(a_{ij}^{\tiny\textcircled{\dag},r}\right)$ and ${\coepl{\bf A}}=\left(a_{ij}^{\tiny\textcircled{\dag},l}\right)$ possess the  determinantal representations, respectively,
 \begin{align}\label{eq:detrep_repcorep}
a_{ij}^{\tiny\textcircled{\dag},r}=&
 \frac{\sum\limits_{\alpha \in I_{s,n} {\left\{ {j}
\right\}}} {{\rm{rdet}} _{j} \left( {\left( { {\bf A}^{ k+1}\left({\bf A}^{k+1} \right)^{*}
} \right)_{j.} ({\hat {\bf a}}_{i\,.})} \right)_{\alpha} ^{\alpha} } }{{{\sum\limits_{\alpha \in I_{s,n}} {{\left|   {\bf A}^{k+1}\left({\bf A}^{k+1}\right)^{*}
  \right|_{\alpha} ^{\alpha}}}}
 }},\\\label{eq:detrep_lepcorep}
 a_{ij}^{\tiny\textcircled{\dag},l}=
& \frac{\sum\limits_{\beta \in J_{s,n} {\left\{ {i}
\right\}}} {{\rm{cdet}} _{i} \left( {\left( \left({\bf A}^{k+1} \right)^{*} {\bf A}^{ k+1}
\right)_{.i} ({\check {\bf a}}_{.j})} \right)_{\beta} ^{\beta} } }{{{\sum\limits_{\beta \in J_{s,n}} {{\left| \left({\bf A}^{k+1}\right)^{*} {\bf A}^{k+1}
  \right|_{\beta} ^{\beta}}}} }},
\end{align}
 where ${\hat {\bf a}}_{i\,.}$ is the $i$-th row of $\hat{{\bf A}}=
{\bf A}^{ k}({\bf A}^{ k+1})^{*}$ and ${\check {\bf a}}_{.j}$ is the $j$-th column of $\check{{\bf A}}=({\bf A}^{ k+1})^{*}
{\bf A}^{ k}$.
\end{thm}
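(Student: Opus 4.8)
The plan is to reduce both determinantal representations to the already-established determinantal representation of the Moore--Penrose inverse (Theorem~\ref{th:det_rep_mp}) by first rewriting each core-EP inverse as a product of $\mathbf{A}^{k}$ with the Moore--Penrose inverse of $\mathbf{A}^{k+1}$. Concretely, I would first prove the two closed product identities
\begin{align*}
\coepr{\mathbf{A}}=\mathbf{A}^{k}\left(\mathbf{A}^{k+1}\right)^{\dag},\qquad
\coepl{\mathbf{A}}=\left(\mathbf{A}^{k+1}\right)^{\dag}\mathbf{A}^{k},
\end{align*}
and then feed the determinantal formula for $(\mathbf{A}^{k+1})^{\dag}$ into these products.

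For the product identities I would argue directly. Set $\mathbf{X}=\mathbf{A}^{k}(\mathbf{A}^{k+1})^{\dag}$ and verify the four characterizing conditions from the characterization lemma above. Because $\Ind(\mathbf{A})=k$ forces $\rk\mathbf{A}^{k+1}=\rk\mathbf{A}^{k}=s$, the inclusions $\mathcal{C}_{r}(\mathbf{A}^{k+1})\subseteq\mathcal{C}_{r}(\mathbf{A}^{k})$ and $\mathcal{R}_{l}(\mathbf{A}^{k+1})\subseteq\mathcal{R}_{l}(\mathbf{A}^{k})$, which come from $\mathbf{A}^{k+1}=\mathbf{A}^{k}\mathbf{A}=\mathbf{A}\mathbf{A}^{k}$, are in fact equalities. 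Hence the orthogonal projectors $\mathbf{P}_{\mathbf{A}^{k+1}}=\mathbf{A}^{k+1}(\mathbf{A}^{k+1})^{\dag}$ and $\mathbf{Q}_{\mathbf{A}^{k+1}}=(\mathbf{A}^{k+1})^{\dag}\mathbf{A}^{k+1}$ fix $\mathbf{A}^{k}$ on the appropriate side, $\mathbf{P}_{\mathbf{A}^{k+1}}\mathbf{A}^{k}=\mathbf{A}^{k}$ and $\mathbf{A}^{k}\mathbf{Q}_{\mathbf{A}^{k+1}}=\mathbf{A}^{k}$. From these two facts $\mathbf{X}\mathbf{A}^{k+1}=\mathbf{A}^{k}$, $\mathbf{A}\mathbf{X}=\mathbf{P}_{\mathbf{A}^{k+1}}$ (so $(\mathbf{A}\mathbf{X})^{*}=\mathbf{A}\mathbf{X}$ and $\mathbf{A}\mathbf{X}^{2}=\mathbf{P}_{\mathbf{A}^{k+1}}\mathbf{X}=\mathbf{X}$), while $\mathcal{C}_{r}(\mathbf{X})\subseteq\mathcal{C}_{r}(\mathbf{A}^{k})$ is immediate; thus $\mathbf{X}=\coepr{\mathbf{A}}$. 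The left identity follows identically from statement (ii) of Theorem~\ref{th1:lcep}. (Alternatively, both identities can be obtained from $\coepr{\mathbf{A}}=\mathbf{A}^{d}\mathbf{A}^{l}(\mathbf{A}^{l})^{\dag}$ and $\coepl{\mathbf{A}}=(\mathbf{A}^{l})^{\dag}\mathbf{A}^{l}\mathbf{A}^{d}$ with $l=k$.)

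For the right inverse I would insert the row-determinantal representation \eqref{eq:rdet_repr_AA*} of $\left(\mathbf{A}^{k+1}\right)^{\dag}=\bigl(b^{\dag}_{pj}\bigr)$, taking $\mathbf{B}=\mathbf{A}^{k+1}\in\mathbb{H}^{n\times n}_{s}$, so that $\mathbf{B}\mathbf{B}^{*}=\mathbf{A}^{k+1}(\mathbf{A}^{k+1})^{*}$ and the input row in the numerator of $b^{\dag}_{pj}$ is the $p$-th row $\mathbf{b}^{*}_{p.}$ of $(\mathbf{A}^{k+1})^{*}$. Writing the $(i,j)$ entry of $\mathbf{A}^{k}(\mathbf{A}^{k+1})^{\dag}$ as $\sum_{p}a^{(k)}_{ip}b^{\dag}_{pj}$, where $a^{(k)}_{ip}$ is the $(i,p)$ entry of $\mathbf{A}^{k}$, and noting that the common denominator $\sum_{\alpha\in I_{s,n}}\left|\mathbf{A}^{k+1}(\mathbf{A}^{k+1})^{*}\right|_{\alpha}^{\alpha}$ is a real scalar (a sum of principal minors of a Hermitian matrix) and hence commutes with the quaternion coefficients $a^{(k)}_{ip}$, the numerator computation comes down to
\begin{align*}
\sum_{p}a^{(k)}_{ip}\sum_{\alpha\in I_{s,n}\{j\}}\mathrm{rdet}_{j}\bigl((\mathbf{B}\mathbf{B}^{*})_{j.}(\mathbf{b}^{*}_{p.})\bigr)_{\alpha}^{\alpha}
=\sum_{\alpha\in I_{s,n}\{j\}}\mathrm{rdet}_{j}\Bigl((\mathbf{B}\mathbf{B}^{*})_{j.}\bigl(\textstyle\sum_{p}a^{(k)}_{ip}\mathbf{b}^{*}_{p.}\bigr)\Bigr)_{\alpha}^{\alpha},
\end{align*}
which is exactly the left-linearity of the row determinant in its $j$-th row, applied inside each principal submatrix indexed by $\alpha$ (Lemma~\ref{lem:left_rdet}). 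Since $\sum_{p}a^{(k)}_{ip}\mathbf{b}^{*}_{p.}$ is precisely the $i$-th row $\hat{\mathbf{a}}_{i.}$ of $\hat{\mathbf{A}}=\mathbf{A}^{k}(\mathbf{A}^{k+1})^{*}$, this yields \eqref{eq:detrep_repcorep}. The left inverse is handled symmetrically: apply the column representation \eqref{eq:cdet_repr_A*A} to $(\mathbf{A}^{k+1})^{\dag}$, expand $(\mathbf{A}^{k+1})^{\dag}\mathbf{A}^{k}$ as $\sum_{t}b^{\dag}_{it}a^{(k)}_{tj}$, and use the right-linearity of the column determinant (Lemma~\ref{lem:right_cdet}) to collapse $\sum_{t}\mathbf{b}^{*}_{.t}a^{(k)}_{tj}$ into the $j$-th column $\check{\mathbf{a}}_{.j}$ of $\check{\mathbf{A}}=(\mathbf{A}^{k+1})^{*}\mathbf{A}^{k}$, giving \eqref{eq:detrep_lepcorep}.

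The only genuinely delicate point is the product step of the first two paragraphs: one must use the rank hypothesis $\rk\mathbf{A}^{k+1}=\rk\mathbf{A}^{k}=s$ to upgrade the space inclusions to equalities, since it is exactly this that forces the projectors onto $\mathcal{C}_{r}(\mathbf{A}^{k+1})$ and $\mathcal{R}_{l}(\mathbf{A}^{k+1})$ to act as the identity on $\mathbf{A}^{k}$; without equal ranks the identities $\coepr{\mathbf{A}}=\mathbf{A}^{k}(\mathbf{A}^{k+1})^{\dag}$ and $\coepl{\mathbf{A}}=(\mathbf{A}^{k+1})^{\dag}\mathbf{A}^{k}$ break down. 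Once these are in place, the passage from the Moore--Penrose formula to the stated representations is a routine application of Lemmas~\ref{lem:left_rdet} and~\ref{lem:right_cdet}, with the reality of the denominator guaranteeing that the quaternion scalars $a^{(k)}_{ip}$ can be moved through the determinants without any reordering obstruction from noncommutativity.
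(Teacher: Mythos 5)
Your proposal is correct and takes essentially the same route as the paper's own methodology (the proof in \cite{ky_cor}, and the analogous arguments the paper gives for Theorems \ref{th:drrcep}, \ref{th:detrep_dmp}, \ref{th:detrep_cmp}): reduce the core-EP inverses to the products $\mathbf{A}^{k}\left(\mathbf{A}^{k+1}\right)^{\dag}$ and $\left(\mathbf{A}^{k+1}\right)^{\dag}\mathbf{A}^{k}$, substitute the Moore--Penrose determinantal representations of Theorem \ref{th:det_rep_mp} for $\left(\mathbf{A}^{k+1}\right)^{\dag}$, and collapse the resulting sums into a single row (resp.\ column) via Lemmas \ref{lem:left_rdet} and \ref{lem:right_cdet}, the real denominator commuting freely. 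One inessential slip: in your parenthetical alternative the exponent should be $l=k+1$, not $l=k$, since it is $\mathbf{A}^{d}\mathbf{A}^{k+1}\left(\mathbf{A}^{k+1}\right)^{\dag}=\mathbf{A}^{k}\left(\mathbf{A}^{k+1}\right)^{\dag}$ (and its mirror image) that yields the product identities your main, correctly argued, verification establishes directly.
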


\begin{cor}\cite{ky_cor}Let ${\bf A}\in  {\mathbb{H}}^{n\times n}_s$, $\Ind {\bf A}=1$, and there exist $\corer{\bf A}$ and $\corel{\bf A}$. Then ${\corer{\bf A}}=\left(a_{ij}^{\tiny\textcircled{\#},r}\right)$ and ${\corel{\bf A}}=\left(a_{ij}^{\tiny\textcircled{\#},l}\right)$ can be expressed  as follows

\begin{align}\label{eq:detrep_repcorep_sim}
a_{ij}^{\tiny\textcircled{\#},r}=&
 \frac{\sum\limits_{\alpha \in I_{s,n} {\left\{ {j}
\right\}}} {{\rm{rdet}} _{j} \left( {\left( { {\bf A}^{2}\left({\bf A}^{2} \right)^{*}
} \right)_{j.} ({\hat {\bf a}}_{i\,.})} \right)_{\alpha} ^{\alpha} } }{{{\sum\limits_{\alpha \in I_{s,n}} {{\left|   {\bf A}^{2}\left({\bf A}^{2}\right)^{*}
  \right|_{\alpha} ^{\alpha}}}}
 }},\\\label{eq:detrep_lepcorep_sim}
 a_{ij}^{\tiny\textcircled{\#},l}=
& \frac{\sum\limits_{\beta \in J_{s,n} {\left\{ {i}
\right\}}} {{\rm{cdet}} _{i} \left( {\left( \left({\bf A}^{2} \right)^{*} {\bf A}^{2}
\right)_{.i} ({\check {\bf a}}_{.j})} \right)_{\beta} ^{\beta} } }{{{\sum\limits_{\beta \in J_{s,n}} {{\left| \left({\bf A}^{2}\right)^{*} {\bf A}^{2}
  \right|_{\beta} ^{\beta}}}} }},
\end{align}
 where ${\hat {\bf a}}_{i\,.}$ is the $i$-th row of $\hat{{\bf A}}=
{\bf A}({\bf A}^{2})^{*}$ and ${\check {\bf a}}_{.j}$ is the $j$-th column of $\check{{\bf A}}=({\bf A}^{2})^{*}
{\bf A}$.
\end{cor}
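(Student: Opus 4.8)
The plan is to recognize this corollary as the index-one specialization of Theorem~\ref{th:lcep}. The only genuine content beyond a direct substitution is to confirm that, when $\Ind\mathbf{A}=1$, the right and left core-EP inverses collapse onto the corresponding core inverses, so that the determinantal formulas \eqref{eq:detrep_repcorep} and \eqref{eq:detrep_lepcorep} in fact compute $\corer{\mathbf{A}}$ and $\corel{\mathbf{A}}$. First I would establish the identifications $\coepr{\mathbf{A}}=\corer{\mathbf{A}}$ and $\coepl{\mathbf{A}}=\corel{\mathbf{A}}$ under $\Ind\mathbf{A}=1$. When the index is one we have $\mathbf{A}^d=\mathbf{A}^\#$ together with $\mathcal{C}_r(\mathbf{A}^d)=\mathcal{C}_r(\mathbf{A})$ and $\mathcal{R}_l(\mathbf{A}^d)=\mathcal{R}_l(\mathbf{A})$, so the column-space and row-space conditions in the definitions of the core-EP inverses reduce exactly to the conditions in the definitions of the right and left core inverses (Definition~\ref{def:lcor} and its right-handed counterpart).

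Concretely, taking $l=k=1$ in the representation $\coepr{\mathbf{A}}=\mathbf{A}^d\mathbf{A}^l(\mathbf{A}^l)^\dag$ yields $\coepr{\mathbf{A}}=\mathbf{A}^\#\mathbf{A}\mathbf{A}^\dag$, the standard closed form of the core inverse, while part (iii) of Theorem~\ref{th1:lcep} with $l=1$ gives $\coepl{\mathbf{A}}=\mathbf{A}^\dag\mathbf{A}\mathbf{A}^\#$. A short check using the group-inverse identities $\mathbf{A}\mathbf{A}^\#=\mathbf{A}^\#\mathbf{A}$ and $\mathbf{A}\mathbf{A}^\#\mathbf{A}=\mathbf{A}$ confirms $\mathbf{A}\,\corer{\mathbf{A}}=\mathbf{P}_A$, $\corel{\mathbf{A}}\,\mathbf{A}=\mathbf{Q}_A$, and the required space equalities $\mathcal{C}_r(\corer{\mathbf{A}})=\mathcal{C}_r(\mathbf{A})$, $\mathcal{R}_l(\corel{\mathbf{A}})=\mathcal{R}_l(\mathbf{A})$, which are precisely the defining conditions of the core inverses.

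With the identifications in hand, I would simply set $k=1$ in Theorem~\ref{th:lcep}. Since $\Ind\mathbf{A}=1$ forces $\rk\mathbf{A}^k=\rk\mathbf{A}=s$, matching the hypothesis $\mathbf{A}\in\mathbb{H}^{n\times n}_s$, the index $s$ controlling the summation sets $I_{s,n}$ and $J_{s,n}$ is unchanged. Substituting $k+1=2$ throughout turns $\mathbf{A}^{k+1}(\mathbf{A}^{k+1})^*$ into $\mathbf{A}^{2}(\mathbf{A}^{2})^*$ and $(\mathbf{A}^{k+1})^*\mathbf{A}^{k+1}$ into $(\mathbf{A}^{2})^*\mathbf{A}^{2}$, while the auxiliary matrices become $\hat{\mathbf{A}}=\mathbf{A}^{k}(\mathbf{A}^{k+1})^*=\mathbf{A}(\mathbf{A}^{2})^*$ and $\check{\mathbf{A}}=(\mathbf{A}^{k+1})^*\mathbf{A}^{k}=(\mathbf{A}^{2})^*\mathbf{A}$. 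These substitutions transform \eqref{eq:detrep_repcorep} and \eqref{eq:detrep_lepcorep} into \eqref{eq:detrep_repcorep_sim} and \eqref{eq:detrep_lepcorep_sim}, which is the claim.

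The main, and essentially only, obstacle is the first step: rigorously justifying that the index-one core-EP inverses coincide with the core inverses, that is, verifying that $\mathbf{A}^\#\mathbf{A}\mathbf{A}^\dag$ and $\mathbf{A}^\dag\mathbf{A}\mathbf{A}^\#$ satisfy the defining equations and space constraints of $\corer{\mathbf{A}}$ and $\corel{\mathbf{A}}$ over $\mathbb{H}$. Once that identification is secured, everything else is a purely mechanical specialization $k\mapsto1$ in Theorem~\ref{th:lcep}, requiring no new row-column determinant computation.
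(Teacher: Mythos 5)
Your proposal is correct and follows essentially the same route as the paper: the corollary is obtained by setting $k=1$ in Theorem~\ref{th:lcep}, using the standard identification that for $\Ind{\bf A}=1$ the right and left core-EP inverses coincide with the right and left core inverses (via ${\bf A}^{d}={\bf A}^{\#}$ and $\mathcal{C}_{r}({\bf A}^{\#})=\mathcal{C}_{r}({\bf A})$, $\mathcal{R}_{l}({\bf A}^{\#})=\mathcal{R}_{l}({\bf A})$), after which $k+1=2$ yields exactly the displayed formulas. Your explicit verification that ${\bf A}^{\#}{\bf A}{\bf A}^{\dag}$ and ${\bf A}^{\dag}{\bf A}{\bf A}^{\#}$ satisfy the defining conditions of $\corer{\bf A}$ and $\corel{\bf A}$ makes rigorous the step the paper leaves implicit, and is sound.
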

The following corollary gives determinantal representations of the right and left, core and core-EP inverses  for complex matrices.
\begin{cor}\cite{ky_cor1}Suppose ${\bf A}\in  {\mathbb{C}}^{n\times n}_s$, $Ind {\bf A}=k$, and there exist
 ${\bf A}^{\tiny\textcircled{\dag}}=\left(a_{ij}^{\tiny\textcircled{\dag},r}\right)$ and ${\bf A}_{\tiny\textcircled{\dag}}=\left(a_{ij}^{\tiny\textcircled{\dag},l}\right)$. Then they have the following determinantal representations, respectively,
 \begin{align*}
a_{ij}^{\tiny\textcircled{\dag},r}=&
 \frac{\sum\limits_{\alpha \in I_{s,n} {\left\{ {j}
\right\}}} { \left| {\left( { {\bf A}^{ k+1}\left({\bf A}^{k+1} \right)^{*}
} \right)_{j.} ({\hat {\bf a}}_{i\,.})} \right|_{\alpha} ^{\alpha} } }{{{\sum\limits_{\alpha \in I_{s,n}} {{\left|   {\bf A}^{k+1}\left({\bf A}^{k+1}\right)^{*}
  \right|_{\alpha} ^{\alpha}}}}
 }},\\
 a_{ij}^{\tiny\textcircled{\dag},l}=
& \frac{\sum\limits_{\beta \in J_{s,n} {\left\{ {i}
\right\}}} { \left| {\left( \left({\bf A}^{k+1} \right)^{*} {\bf A}^{ k+1}
\right)_{.i} ({\check {\bf a}}_{.j})} \right|_{\beta} ^{\beta} } }{{{\sum\limits_{\beta \in J_{s,n}} {{\left| \left({\bf A}^{k+1}\right)^{*} {\bf A}^{k+1}
  \right|_{\beta} ^{\beta}}}} }},
\end{align*}
 where ${\hat {\bf a}}_{i\,.}$ is the $i$-th row of $\hat{{\bf A}}=
{\bf A}^{ k}({\bf A}^{ k+1})^{*}$ and ${\check {\bf a}}_{.j}$ is the $j$-th column of $\check{{\bf A}}=({\bf A}^{ k+1})^{*}
{\bf A}^{ k}$.

If $Ind {\bf A}=1$, then
 ${\bf A}^{\tiny\textcircled{\#}}=\left(a_{ij}^{\tiny\textcircled{\#},r}\right)$ and ${\bf A}_{\tiny\textcircled{\#}}=\left(a_{ij}^{\tiny\textcircled{\#},l}\right)$ have the following determinantal representations, respectively,
 \begin{align*}
a_{ij}^{\tiny\textcircled{\#},r}=&
 \frac{\sum\limits_{\alpha \in I_{s,n} {\left\{ {j}
\right\}}} { \left| {\left( { {\bf A}^{ 2}\left({\bf A}^{2} \right)^{*}
} \right)_{j.} ({\hat {\bf a}}_{i\,.})} \right|_{\alpha} ^{\alpha} } }{{{\sum\limits_{\alpha \in I_{s,n}} {{\left|   {\bf A}^{2}\left({\bf A}^{2}\right)^{*}
  \right|_{\alpha} ^{\alpha}}}}
 }},\\
 a_{ij}^{\tiny\textcircled{\#},l}=
& \frac{\sum\limits_{\beta \in J_{s,n} {\left\{ {i}
\right\}}} { \left| {\left( \left({\bf A}^{2} \right)^{*} {\bf A}^{2}
\right)_{.i} ({\check {\bf a}}_{.j})} \right|_{\beta} ^{\beta} } }{{{\sum\limits_{\beta \in J_{s,n}} {{\left| \left({\bf A}^{2}\right)^{*} {\bf A}^{2}
  \right|_{\beta} ^{\beta}}}} }},
\end{align*}
 where ${\hat {\bf a}}_{i\,.}$ is the $i$-th row of $\hat{\bf A}=
{\bf A}({\bf A}^{ 2})^{*}$ and ${\check {\bf a}}_{.j}$ is the $j$-th column of $\check{{\bf A}}=({\bf A}^{2})^{*}
{\bf A}$.

\end{cor}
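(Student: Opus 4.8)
The plan is to obtain this corollary as the complex specialization of the quaternion Theorem~\ref{th:lcep}, exploiting the fact that $\mathbb{C}$ embeds in $\mathbb{H}$, so every complex matrix is in particular a quaternion matrix of the same rank and index. First I would apply Theorem~\ref{th:lcep} to ${\bf A}\in{\mathbb{C}}^{n\times n}_s$ regarded as an element of ${\mathbb{H}}^{n\times n}_s$. This immediately produces the representations \eqref{eq:detrep_repcorep} and \eqref{eq:detrep_lepcorep} for $\coepr{\bf A}$ and $\coepl{\bf A}$, in which the numerators carry the noncommutative row and column determinants ${\rm rdet}_j$ and ${\rm cdet}_i$, while the denominators are already ordinary principal minors $|\cdot|_\alpha^\alpha$ and $|\cdot|_\beta^\beta$ of the Hermitian matrices ${\bf A}^{k+1}({\bf A}^{k+1})^*$ and $({\bf A}^{k+1})^*{\bf A}^{k+1}$.

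The main step is to show that when every entry lies in the commutative field $\mathbb{C}$, both the row determinant and the column determinant collapse to the usual determinant. Returning to the defining formulas for ${\rm rdet}_i$ and ${\rm cdet}_j$, if the entries $a_{pq}$ commute then the order of the factors inside each cyclic product is irrelevant, so each left-ordered (respectively right-ordered) permutation $\sigma$ contributes exactly the monomial $\prod_{q} a_{q\,\sigma(q)}$ weighted by $(-1)^{n-r}$. Since a permutation of $n$ symbols decomposing into $r$ disjoint cycles has $\operatorname{sgn}\sigma=(-1)^{n-r}$, the signed sum is precisely the Leibniz expansion of $\det$. Hence ${\rm rdet}_j(\,\cdot\,)_\alpha^\alpha=|\,\cdot\,|_\alpha^\alpha$ and ${\rm cdet}_i(\,\cdot\,)_\beta^\beta=|\,\cdot\,|_\beta^\beta$ for each submatrix appearing in the numerators of \eqref{eq:detrep_repcorep}--\eqref{eq:detrep_lepcorep}, and substituting these equalities yields the first two displayed complex formulas for $\coepr{\bf A}$ and $\coepl{\bf A}$.

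For the core-inverse statement I would then specialize to $\Ind{\bf A}=1$, that is $k=1$. Under index one the right and left core-EP inverses coincide with the right and left core inverses $\corer{\bf A}$ and $\corel{\bf A}$, and setting $k=1$ replaces ${\bf A}^{k+1}$ by ${\bf A}^2$ and ${\bf A}^k$ by ${\bf A}$, so that $\hat{\bf A}={\bf A}({\bf A}^2)^*$ and $\check{\bf A}=({\bf A}^2)^*{\bf A}$. This gives the last two displayed formulas directly, with no further computation.

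The only genuine obstacle is the rigorous justification that ${\rm rdet}$ and ${\rm cdet}$ both reduce to $\det$ in the commutative case: one must match the left-ordered and right-ordered cycle parametrizations of $S_n$ used in the two definitions against the standard cycle decomposition of a permutation, and verify that the prefactor $(-1)^{n-r}$ agrees with $\operatorname{sgn}\sigma$ on each class. This is a purely combinatorial check, valid over any commutative ring of entries, and once it is established the whole corollary follows from Theorem~\ref{th:lcep} by direct substitution.
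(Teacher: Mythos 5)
Your proposal is correct and matches the paper's own route: the corollary is presented precisely as the complex specialization of the quaternion Theorem~\ref{th:lcep} (with $k=1$ giving the core-inverse case), relying on the standard fact that ${\rm rdet}_i$ and ${\rm cdet}_j$ collapse to the ordinary determinant over commutative entries, since a permutation with $r$ disjoint cycles on $n$ symbols has sign $(-1)^{n-r}$. Your combinatorial justification of that collapse is exactly the check the paper leaves implicit by citing its source.
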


\section{Concepts of quaternion W-weighted
core-EP  inverses and their determinantal representations}

 The concept of the W-weighted
core-EP  inverse in complex matrices was introduced by Ferreyra et al. \cite{fer2} that can be expended to quaternion matrices as follows.
\begin{defn}\label{def_wrcep}Suppose ${\bf A}\in  {\mathbb{H}}^{m \times n}$,  ${\bf W}\in  {\mathbb{H}}^{n \times m}$, and $k=\max\{\Ind ({\bf W}{\bf A}),$ $\Ind ({\bf A}{\bf W})\}$.
The right W-weighted core-EP inverse of $ {\bf A}$  is the unique solution to the system \begin{align*}
{\bf W}{\bf A}{\bf W}{\bf X}=\left({\bf W}{\bf A}\right)^k\left[{\left({\bf W}{\bf A}\right)^k}\right]^{\dag},~ {\text{and}}~ \mathcal{C}_r({\bf X})\subseteq\mathcal{C}_r\left({\left({\bf A}{\bf W}\right)^k}\right). \end{align*}
It is denoted  ${\bf A}^{\tiny\textcircled{\dag},W,r}$.
\end{defn}
Due to \cite{pras}, the right weighted core-EP inverse over the quaternion skew field can be determined as follows.
\begin{thm}Let ${\bf A},~{\bf X} \in{\mathbb{H}}^{m\times n}$, ${\bf W} \in{\mathbb{H}}^{n\times m}$, and $k=\max\{\Ind ({\bf W}{\bf A}),$ $\Ind ({\bf A}{\bf W})\}$. The following statements are
equivalent:
\begin{itemize}
  \item [(i)]  ${\bf X}$ is the right weighted core-EP inverse of $ {\bf A}$.
  \item [(ii)] ${\bf X}{\bf W}\left({\bf A}{\bf W}\right)^{k+1}= \left({\bf A}{\bf W}\right)^k,~{\bf A}{\bf W}{\bf X} {\bf
W}{\bf X}   = {\bf X},~ {\text{and}}~ \left({\bf W}{\bf A}{\bf W}{\bf X}\right)^*={\bf W}{\bf A}{\bf W}{\bf X}$.
\item [(iii)] \begin{align}\label{eq:rep rwcep}{\bf X}={\bf A} \left[\left({\bf W}{\bf A}\right)^{\tiny\textcircled{\dag}}\right]^2.\end{align}
\end{itemize}
\end{thm}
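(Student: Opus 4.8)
The plan is to reduce the weighted problem to the ordinary right core-EP inverse of the $n\times n$ matrix ${\bf B}:={\bf W}{\bf A}$, mirroring the way the weighted Drazin inverse in (\ref{eq:WDr_dr}) is assembled from the Drazin inverse of ${\bf W}{\bf A}$. Write ${\bf C}:={\bf A}{\bf W}$ and let ${\bf Y}$ be the ordinary right core-EP inverse $({\bf W}{\bf A})^{\tiny\textcircled{\dag}}$ of ${\bf B}$, so that the proposed formula (iii) reads ${\bf X}={\bf A}{\bf Y}^2$. Since $\Ind({\bf B})\le k$, the characterization of the right core-EP inverse recalled above supplies ${\bf Y}{\bf B}^{k+1}={\bf B}^k$, ${\bf B}{\bf Y}^2={\bf Y}$, $({\bf B}{\bf Y})^*={\bf B}{\bf Y}$ and $\mathcal{C}_r({\bf Y})\subseteq\mathcal{C}_r({\bf B}^k)$, as well as the explicit form ${\bf Y}={\bf B}^d{\bf B}^k({\bf B}^k)^{\dagger}$ and, via ${\bf B}{\bf B}^d{\bf B}^k={\bf B}^k$, the relation ${\bf B}{\bf Y}={\bf B}^k({\bf B}^k)^{\dagger}=:{\bf P}$, the orthogonal projector onto $\mathcal{C}_r({\bf B}^k)$. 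Throughout I would use the shift identities ${\bf A}({\bf W}{\bf A})^j=({\bf A}{\bf W})^j{\bf A}$ and $({\bf W}{\bf A})^j{\bf W}={\bf W}({\bf A}{\bf W})^j$, and the commuting relation ${\bf A}{\bf B}^d={\bf C}^d{\bf A}$ coming from the intertwining ${\bf A}{\bf B}={\bf C}{\bf A}$.

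The backbone is the explicit matrix ${\bf X}_0:={\bf A}{\bf Y}^2$, which I would first show solves the defining system of Definition \ref{def_wrcep}. The first defining equation holds because ${\bf W}{\bf A}{\bf W}{\bf X}_0={\bf B}^2{\bf Y}^2={\bf B}({\bf B}{\bf Y}^2)={\bf B}{\bf Y}={\bf P}$; the range condition holds because, writing ${\bf Y}={\bf B}^k{\bf Z}$, one has ${\bf X}_0={\bf A}({\bf W}{\bf A})^k{\bf Z}{\bf Y}=({\bf A}{\bf W})^k{\bf A}{\bf Z}{\bf Y}$, so $\mathcal{C}_r({\bf X}_0)\subseteq\mathcal{C}_r(({\bf A}{\bf W})^k)$. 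As Definition \ref{def_wrcep} declares the solution unique, this already identifies ${\bf X}_0$ as the right W-weighted core-EP inverse and yields (i)$\Leftrightarrow$(iii). I would then check that the same ${\bf X}_0$ satisfies the three equations of (ii): the third is just the Hermitian identity $({\bf B}{\bf Y})^*={\bf B}{\bf Y}$; the second follows from ${\bf A}{\bf W}{\bf X}_0={\bf A}{\bf B}{\bf Y}^2={\bf A}{\bf Y}$ and then ${\bf A}{\bf W}{\bf X}_0{\bf W}{\bf X}_0={\bf A}{\bf Y}{\bf B}{\bf Y}^2={\bf A}{\bf Y}({\bf B}{\bf Y}^2)={\bf A}{\bf Y}^2={\bf X}_0$; and the first reduces, through ${\bf Y}^2{\bf B}^{k+1}={\bf Y}{\bf B}^k={\bf B}^d{\bf B}^k$ together with the shift and commuting identities, to ${\bf C}^d{\bf C}^{k+1}={\bf C}^k$. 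This gives (iii)$\Rightarrow$(ii).

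It remains to close the cycle with (ii)$\Rightarrow$(i), which is where the genuine work lies. Assuming ${\bf X}$ satisfies the three equations of (ii), I would set ${\bf G}:={\bf W}{\bf X}$ and ${\bf H}:={\bf W}{\bf A}{\bf W}{\bf X}={\bf B}{\bf G}$. The second equation rewrites as ${\bf X}={\bf A}{\bf G}^2$ and, after left multiplication by ${\bf W}$, as ${\bf G}={\bf H}{\bf G}$; iterating ${\bf X}={\bf A}{\bf W}{\bf X}{\bf W}{\bf X}$ yields ${\bf X}=({\bf A}{\bf W})^k{\bf R}$ for some ${\bf R}$, hence the range condition $\mathcal{C}_r({\bf X})\subseteq\mathcal{C}_r(({\bf A}{\bf W})^k)$. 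From ${\bf H}={\bf B}{\bf G}$ and ${\bf G}={\bf H}{\bf G}$ a short induction gives ${\bf H}={\bf B}^j{\bf G}^j$ for every $j\ge1$, so $\mathcal{C}_r({\bf H})\subseteq\mathcal{C}_r({\bf B}^k)$. Left-multiplying the first equation by ${\bf W}$ and applying a shift identity turns it into ${\bf G}{\bf B}^{k+1}{\bf W}={\bf B}^k{\bf W}$; right-multiplying by ${\bf A}$ folds the trailing ${\bf W}$ back into ${\bf B}$ and yields ${\bf G}{\bf B}^{k+2}={\bf B}^{k+1}$, whence ${\bf H}{\bf B}^{k+2}={\bf B}{\bf G}{\bf B}^{k+2}={\bf B}^{k+2}$. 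Because $\Ind({\bf B})\le k$ gives $\mathcal{C}_r({\bf B}^{k+2})=\mathcal{C}_r({\bf B}^k)$, the matrix ${\bf H}$ acts as the identity on $\mathcal{C}_r({\bf B}^k)$; combined with $\mathcal{C}_r({\bf H})\subseteq\mathcal{C}_r({\bf B}^k)$ this forces ${\bf H}$ to be idempotent with $\mathcal{C}_r({\bf H})=\mathcal{C}_r({\bf B}^k)$, while the third equation makes ${\bf H}$ Hermitian. A Hermitian idempotent with column space $\mathcal{C}_r({\bf B}^k)$ is the orthogonal projector ${\bf B}^k({\bf B}^k)^{\dagger}$, which is exactly the first defining equation of Definition \ref{def_wrcep}. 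Hence ${\bf X}$ solves that system and (i) holds.

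The step I expect to be the main obstacle is precisely this direction (ii)$\Rightarrow$(i), and within it the recovery of the first defining equation ${\bf W}{\bf A}{\bf W}{\bf X}={\bf B}^k({\bf B}^k)^{\dagger}$. The delicate points are the passage between the two pictures ${\bf B}={\bf W}{\bf A}$ and ${\bf C}={\bf A}{\bf W}$ and, above all, the elimination of the trailing factor ${\bf W}$ that appears when the first equation is transported into the ${\bf B}$-picture; right-multiplying by ${\bf A}$ so that ${\bf W}{\bf A}$ re-absorbs into ${\bf B}$, and then invoking $\Ind({\bf B})\le k$ to identify $\mathcal{C}_r({\bf B}^{k+2})$ with $\mathcal{C}_r({\bf B}^k)$, is the device that makes the projector identification go through. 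Everything else is routine bookkeeping with the core-EP identities for ${\bf Y}$ and the shift and intertwining identities for powers of ${\bf W}{\bf A}$ and ${\bf A}{\bf W}$.
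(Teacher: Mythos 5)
Your proposal is correct, but it closes the equivalence by a route genuinely different from the paper's. In fact the paper never proves this right-handed statement directly: it is quoted from the literature, and only its mirror image for the left weighted core-EP inverse (Theorem \ref{thm_wlcep}) is proved there. That proof shares your two easy legs --- the explicit formula is checked against the defining system of Definition \ref{def_wlcep} (which, together with the uniqueness the definition asserts, gives (i)$\Leftrightarrow$(iii)), and the formula is checked against the three equations --- but it closes the cycle computationally: for a matrix ${\bf Y}$ obeying the three equations it iterates the second equation, substitutes the first, regroups into $({\bf Y}{\bf W}{\bf A}{\bf W})^{k+1}{\bf X}$, and collapses this to the explicit formula by Moore--Penrose identities. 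Your (ii)$\Rightarrow$(i) instead shows that any solution of the three equations satisfies the defining system itself: iterating the second equation gives the range condition, the identities ${\bf H}=({\bf W}{\bf A})^{j}({\bf W}{\bf X})^{j}$ and ${\bf H}({\bf W}{\bf A})^{k+2}=({\bf W}{\bf A})^{k+2}$ make ${\bf H}={\bf W}{\bf A}{\bf W}{\bf X}$ an idempotent with $\mathcal{C}_r({\bf H})=\mathcal{C}_r(({\bf W}{\bf A})^k)$, and the third equation then pins ${\bf H}$ down as the orthogonal projector $({\bf W}{\bf A})^k[({\bf W}{\bf A})^k]^{\dag}$, since two Hermitian idempotents $P,Q$ with the same right column space coincide over ${\mathbb{H}}$ exactly as over ${\mathbb{C}}$ (from $PQ=Q$ and $QP=P$ one gets $Q=Q^*=(PQ)^*=QP=P$). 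This buys something real: the paper's chain hinges on the regrouping $({\bf Y}{\bf W})^{k+1}({\bf A}{\bf W})^{k+1}=({\bf Y}{\bf W}{\bf A}{\bf W})^{k+1}$ and, as written, even invokes the defining equation of ${\bf Y}$ inside the computation, so it establishes uniqueness only among matrices already known to satisfy the definition; your projector identification needs no such regrouping and genuinely derives the definition from the three equations alone, which is the precise content of (ii)$\Rightarrow$(i). Two details you should make explicit so the argument is airtight over the quaternions: the commuting relation ${\bf A}({\bf W}{\bf A})^d=({\bf A}{\bf W})^d{\bf A}$ should be derived from (\ref{eq:WDr_dr}) rather than from intertwining alone (over ${\mathbb{H}}$ one cannot cite that the Drazin inverse is a polynomial in the matrix); for instance, $({\bf A}{\bf W})^d{\bf A}=\left(({\bf A}{\bf W})^d\right)^2{\bf A}{\bf W}{\bf A}={\bf A}\left(({\bf W}{\bf A})^d\right)^2{\bf W}{\bf A}={\bf A}({\bf W}{\bf A})^d$. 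Also, the equality $\mathcal{C}_r(({\bf W}{\bf A})^{k+2})=\mathcal{C}_r(({\bf W}{\bf A})^k)$ should be justified by noting that these are nested right subspaces of equal dimension because $k\geq\Ind({\bf W}{\bf A})$. With those two sentences added, your proof is complete.
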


We propose to introduce a left weighted core-EP inverse as well.
\begin{defn}\label{def_wlcep}Suppose ${\bf A}\in  {\mathbb{H}}^{m \times n}$,  ${\bf W}\in  {\mathbb{H}}^{n \times m}$, and $k=\max\{\Ind ({\bf W}{\bf A}),$ $\Ind ({\bf A}{\bf W})\}$.
The left W-weighted core-EP inverse of $ {\bf A}$  is the unique solution to the system \begin{align}\label{eq:def_wlcep}
{\bf X}{\bf W}{\bf A}{\bf W}=\left[{\left({\bf A}{\bf W}\right)^k}\right]^{\dag}\left({\bf A}{\bf W}\right)^k,~{\text {and}}~ \mathcal{R}_l({\bf X})\subseteq\mathcal{R}_l\left({\left({\bf W}{\bf A}\right)^k}\right). \end{align}
It is denoted  ${\bf A}^{\tiny\textcircled{\dag},W,l}$.
\end{defn}

\begin{thm}\label{thm_wlcep}Let ${\bf A},~{\bf X} \in{\mathbb{H}}^{m\times n}$, ${\bf W} \in{\mathbb{H}}^{n\times m}$, and $k=\max\{\Ind ({\bf W}{\bf A}),$ $\Ind ({\bf A}{\bf W})\}$. The following statements are
equivalent:
\begin{itemize}
  \item [(i)]~$ {\bf X}=\left[\left({\bf A}{\bf W}\right)_{\tiny\textcircled{\dag}}\right]^2{\bf A}.$
   \item [(ii)]
   ${\bf X}$ is the left weighted core-EP inverse of $ {\bf A}$.
  \item  [(iii)]
  ${\bf X}$ is the unique solution to the three equations:
   \begin{align}\label{eq:wlcep}\left({\bf W}{\bf A}\right)^{k+1}{\bf W}{\bf X}= \left({\bf W}{\bf A}\right)^k,~{\bf X}{\bf W}{\bf X} {\bf
W}{\bf A}   = {\bf X},~ \text{and}~ \left({\bf X{\bf W}{\bf A}{\bf W}}\right)^*={\bf X}{\bf W}{\bf A}{\bf W}.\end{align}

\end{itemize}
\end{thm}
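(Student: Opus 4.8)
The plan is to derive the entire theorem from the already-established characterization of the \emph{right} weighted core-EP inverse (the theorem preceding this one), applied to the pair $({\bf A}^*,{\bf W}^*)$, via a conjugate-transpose duality. Two preliminary remarks make this legitimate. First, $\rk\left(({\bf B}^{j})^*\right)=\rk({\bf B}^{j})$ gives $\Ind({\bf B}^*)=\Ind({\bf B})$ for every square ${\bf B}$; since $({\bf A}{\bf W})^*={\bf W}^*{\bf A}^*$ and $({\bf W}{\bf A})^*={\bf A}^*{\bf W}^*$, the integer $k=\max\{\Ind({\bf W}{\bf A}),\Ind({\bf A}{\bf W})\}$ associated with $({\bf A},{\bf W})$ equals the one associated with $({\bf A}^*,{\bf W}^*)$. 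Second, replacing ${\bf B}$ by ${\bf B}^*$ in the recorded relation $({\bf B}^{\tiny\textcircled{\dag}})^*=({\bf B}^*)_{\tiny\textcircled{\dag}}$ yields $({\bf B}^*)^{\tiny\textcircled{\dag}}=({\bf B}_{\tiny\textcircled{\dag}})^*$, which I will use with ${\bf B}={\bf A}{\bf W}$.

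First I would prove the duality dictionary: \emph{${\bf X}$ is the left weighted core-EP inverse of ${\bf A}$ with respect to ${\bf W}$ if and only if ${\bf X}^*$ is the right weighted core-EP inverse of ${\bf A}^*$ with respect to ${\bf W}^*$.} Taking the conjugate transpose of the first relation in (\ref{eq:def_wlcep}) and using $({\bf P}{\bf Q})^*={\bf Q}^*{\bf P}^*$, $\left({\bf C}^{\dag}\right)^*=\left({\bf C}^*\right)^{\dag}$ and $({\bf A}{\bf W})^*={\bf W}^*{\bf A}^*$, the equation ${\bf X}{\bf W}{\bf A}{\bf W}=\left[({\bf A}{\bf W})^k\right]^{\dag}({\bf A}{\bf W})^k$ becomes ${\bf W}^*{\bf A}^*{\bf W}^*{\bf X}^*=({\bf W}^*{\bf A}^*)^k\left[({\bf W}^*{\bf A}^*)^k\right]^{\dag}$, which is exactly the first defining relation in Definition \ref{def_wrcep} for $({\bf A}^*,{\bf W}^*)$ with unknown ${\bf X}^*$. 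Since conjugate transposition interchanges the left row space and the right column space, i.e. $\{{\bf z}^*:{\bf z}\in\mathcal{R}_l({\bf M})\}=\mathcal{C}_r({\bf M}^*)$, the inclusion $\mathcal{R}_l({\bf X})\subseteq\mathcal{R}_l\left(({\bf W}{\bf A})^k\right)$ is equivalent to $\mathcal{C}_r({\bf X}^*)\subseteq\mathcal{C}_r\left(({\bf A}^*{\bf W}^*)^k\right)$, the second defining relation for the right weighted core-EP inverse of ${\bf A}^*$. This proves the dictionary, and with it the existence and uniqueness of ${\bf X}$ (both inherited from the right case).

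With the dictionary available, statements (i)--(iii) are simply the conjugate-transpose mirrors of the three equivalent statements of the preceding theorem applied to $({\bf A}^*,{\bf W}^*)$ with unknown ${\bf Y}={\bf X}^*$. Its explicit formula ${\bf Y}={\bf A}^*\left[({\bf W}^*{\bf A}^*)^{\tiny\textcircled{\dag}}\right]^2$ gives, using $({\bf W}^*{\bf A}^*)^{\tiny\textcircled{\dag}}=\left(({\bf A}{\bf W})^*\right)^{\tiny\textcircled{\dag}}=\left(({\bf A}{\bf W})_{\tiny\textcircled{\dag}}\right)^*$ and $\left({\bf M}^*\right)^2=\left({\bf M}^2\right)^*$, the identity ${\bf X}={\bf Y}^*=\left[({\bf A}{\bf W})_{\tiny\textcircled{\dag}}\right]^2{\bf A}$, which is statement (i); hence (i)$\Leftrightarrow$(ii). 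Conjugate-transposing its three equations turns ${\bf Y}{\bf W}^*({\bf A}^*{\bf W}^*)^{k+1}=({\bf A}^*{\bf W}^*)^k$ into $({\bf W}{\bf A})^{k+1}{\bf W}{\bf X}=({\bf W}{\bf A})^k$, turns ${\bf A}^*{\bf W}^*{\bf Y}{\bf W}^*{\bf Y}={\bf Y}$ into ${\bf X}{\bf W}{\bf X}{\bf W}{\bf A}={\bf X}$, and turns the Hermitian condition $\left({\bf W}^*{\bf A}^*{\bf W}^*{\bf Y}\right)^*={\bf W}^*{\bf A}^*{\bf W}^*{\bf Y}$ into $\left({\bf X}{\bf W}{\bf A}{\bf W}\right)^*={\bf X}{\bf W}{\bf A}{\bf W}$; these are precisely the three equations (\ref{eq:wlcep}), whose unique solvability again descends from the right case. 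Thus (ii)$\Leftrightarrow$(iii).

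The only genuinely delicate part is the bookkeeping: one must keep the order of the noncommutative factors correct under each conjugate transpose and confirm that powers, the Moore-Penrose inverse, and the core-EP inverse all commute with $*$ as claimed (in particular the two identities $\left({\bf C}^{\dag}\right)^*=\left({\bf C}^*\right)^{\dag}$ and $({\bf B}^*)^{\tiny\textcircled{\dag}}=({\bf B}_{\tiny\textcircled{\dag}})^*$), together with the index equality for $({\bf A},{\bf W})$ and $({\bf A}^*,{\bf W}^*)$. Once these are in place no further computation is required, since the statement is exactly the $*$-mirror of the right weighted core-EP theorem. A direct route---substituting the formula of (i) into (\ref{eq:wlcep}) and then verifying (\ref{eq:def_wlcep}), and conversely---is possible but duplicates the work already done for the right inverse and is considerably more laborious.
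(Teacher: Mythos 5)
Your proposal is correct, but it follows a genuinely different route from the paper's. You transfer the whole statement through conjugate-transpose duality: ${\bf X}$ is the left weighted core-EP inverse of $({\bf A},{\bf W})$ if and only if ${\bf X}^*$ is the right weighted core-EP inverse of $({\bf A}^*,{\bf W}^*)$, and then you read off (i)--(iii) as the $*$-mirrors of the preceding right-case theorem; the bookkeeping you flag (invariance of the index under $*$, $\left({\bf C}^{\dag}\right)^*=\left({\bf C}^*\right)^{\dag}$, $\left(({\bf A}{\bf W})^*\right)^{\tiny\textcircled{\dag}}=\left(({\bf A}{\bf W})_{\tiny\textcircled{\dag}}\right)^*$, and the identification of $\mathcal{R}_l({\bf M})$ with $\mathcal{C}_r({\bf M}^*)$ under conjugate transposition) is all sound, and the order of the noncommutative factors comes out right in each of the three mirrored equations. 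The paper instead argues directly: it verifies that $\left[\left({\bf A}{\bf W}\right)_{\tiny\textcircled{\dag}}\right]^2{\bf A}$ satisfies (\ref{eq:def_wlcep}) and then the three equations (\ref{eq:wlcep}), using the representation $\left({\bf A}{\bf W}\right)_{\tiny\textcircled{\dag}}=\left(\left({\bf A}{\bf W}\right)^k\right)^{\dag}\left({\bf A}{\bf W}\right)^k\left({\bf A}{\bf W}\right)^d$ supplied by Theorems \ref{th:lcep} and \ref{th1:lcep}, and it proves uniqueness by the explicit chain ${\bf Y}={\bf Y}({\bf W}{\bf Y})^k({\bf W}{\bf A})^k={\bf Y}({\bf W}{\bf Y})^k({\bf W}{\bf A})^{k+1}{\bf W}{\bf X}=\cdots=\left[\left({\bf A}{\bf W}\right)_{\tiny\textcircled{\dag}}\right]^2{\bf A}$. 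The trade-off is this: your argument is shorter, avoids duplicating computations, and makes the left/right symmetry explicit; but it rests entirely on the right-case theorem, which this paper states without proof (as an adaptation of \cite{pras}), so within the paper your proof shifts the logical burden to that unproved statement rather than discharging it, whereas the paper's direct verification is self-contained relative to its previously established square-matrix core-EP results. Likewise your uniqueness claim in (iii) is inherited from the right case, which is legitimate only once one grants the full (ii)$\Leftrightarrow$(iii) equivalence of that theorem, while the paper proves uniqueness by hand.
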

\begin{proof}$(i)\mapsto (ii)$
We show that ${\bf X}=\left[\left({\bf A}{\bf W}\right)_{\tiny\textcircled{\dag}}\right]^2{\bf A} $ satisfies the condition (\ref{eq:def_wlcep}). Indeed,
\begin{align*}
{\bf X}{\bf W}{\bf A}{\bf W}=&\left[\left({\bf A}{\bf W}\right)_{\tiny\textcircled{\dag}}\right]^2{\bf A}{\bf W}{\bf A}{\bf W}=\left({\bf A}{\bf W}\right)_{\tiny\textcircled{\dag}}{\bf A}{\bf W}=\\=&\left[{\left({\bf A}{\bf W}\right)^{k}}\right]^{\dag}\left({\bf A}{\bf W}\right)^{k}\left({\bf A}{\bf W}\right)^d{\bf A}{\bf W}=\left[{\left({\bf A}{\bf W}\right)^k}\right]^{\dag}\left({\bf A}{\bf W}\right)^k,\\
\left[\left({\bf A}{\bf W}\right)_{\tiny\textcircled{\dag}}\right]^2{\bf A}=&\left[\left({\bf A}{\bf W}\right)_{\tiny\textcircled{\dag}}\right]^3{\bf A}{\bf W}{\bf A}=\ldots=\left[\left({\bf A}{\bf W}\right)_{\tiny\textcircled{\dag}}\right]^{k+2}\left({\bf A}{\bf W}\right)^k{\bf A}=\\=&\left[\left({\bf A}{\bf W}\right)_{\tiny\textcircled{\dag}}\right]^{k+2}{\bf A}\left({\bf W}{\bf A}\right)^k, {\text {i.e.}}\\
\mathcal{R}_l &(\left[\left({\bf A}{\bf W}\right)_{\tiny\textcircled{\dag}}\right]^2{\bf A})\subseteq\mathcal{R}_l\left({\left({\bf W}{\bf A}\right)^k}\right). \end{align*}

$(ii)\mapsto (iii)$ Now, we prove that ${\bf X}=\left[\left({\bf A}{\bf W}\right)_{\tiny\textcircled{\dag}}\right]^2{\bf A} $ satisfies the equations in (\ref{eq:wlcep}).

Since $\left({\bf W}{\bf A}\right)^{k+1}{\bf W}={\bf W}\left({\bf A}{\bf W}\right)^{k+1}$ and due to Theorem \ref{th:lcep}, 
$$\left({\bf A}{\bf W}\right)_{\tiny\textcircled{\dag}}=\left(\left({\bf A}{\bf W}\right)^k\right)^{\dag}\left({\bf A}{\bf W}\right)^k\left({\bf A}{\bf W}\right)^d,$$
 then
\begin{align*}
\left({\bf W}{\bf A}\right)^{k+1}{\bf W}\left[\left({\bf A}{\bf W}\right)_{\tiny\textcircled{\dag}}\right]^2{\bf A}=&{\bf W}\left[\left({\bf A}{\bf W}\right)^{k+1}\left({\bf A}{\bf W}\right)_{\tiny\textcircled{\dag}}\right]\left({\bf W}{\bf A}\right)_{\tiny\textcircled{\dag}}{\bf A}=\\
=&{\bf W}\left({\bf A}{\bf W}\right)^{k}\left(\left({\bf A}{\bf W}\right)^k\right)^{\dag}\left({\bf A}{\bf W}\right)^k\left({\bf A}{\bf W}\right)^d{\bf A}=\\
=&{\bf W}\left({\bf A}{\bf W}\right)^{k}\left({\bf A}{\bf W}\right)^d{\bf A}=\\
=&\left({\bf W}{\bf A}\right)^{k+1}\left({\bf W}{\bf A}\right)^d=
\\=&
 \left({\bf W}{\bf A}\right)^k.
\end{align*}
By Theorem \ref{th:lcep}, taking into account $\left[\left({\bf A}{\bf W}\right)_{\tiny\textcircled{\dag}}\right]^2{\bf A} {\bf
W}=\left({\bf A}{\bf W}\right)_{\tiny\textcircled{\dag}}$, we have
\begin{align*}
\left[\left({\bf A}{\bf W}\right)_{\tiny\textcircled{\dag}}\right]^2{\bf A}{\bf W}\left(\left[\left({\bf A}{\bf W}\right)_{\tiny\textcircled{\dag}}\right]^2{\bf A} {\bf
W}\right){\bf A}   =&\left[\left({\bf A}{\bf W}\right)_{\tiny\textcircled{\dag}}\right]^2{\bf A}{\bf W}\left({\bf A}{\bf W}\right)_{\tiny\textcircled{\dag}}{\bf A} =\\=&
 \left[\left({\bf A}{\bf W}\right)_{\tiny\textcircled{\dag}}\right]^2{\bf A}.
\end{align*}
Finally,
\begin{align*}
\left(\left[\left({\bf A}{\bf W}\right)_{\tiny\textcircled{\dag}}\right]^2{\bf A}{\bf W}{\bf A}{\bf W}\right)^*   =&\left(\left({\bf A}{\bf W}\right)_{\tiny\textcircled{\dag}}{\bf A}{\bf W}\right)^*=\left({\bf A}{\bf W}\right)_{\tiny\textcircled{\dag}}{\bf A}{\bf W}.
\end{align*}
Now, we  prove  the uniqueness of  ${\bf X}$. Let
 \begin{align*}\left({\bf W}{\bf A}\right)^{k+1}{\bf W}{\bf X}= &\left({\bf W}{\bf A}\right)^k,~{\bf X}{\bf W}{\bf X} {\bf
W}{\bf A}   = {\bf X},~  \left({\bf X}{\bf W}{\bf A}{\bf W}\right)^*={\bf X}{\bf W}{\bf A}{\bf W},\\ ~& {\text {and}}~{\bf X}=\left[\left({\bf A}{\bf W}\right)_{\tiny\textcircled{\dag}}\right]^2{\bf A}.\end{align*} Suppose there also exists the left weighted core-EP inverse ${\bf Y}$ such that
 \begin{align*}\left({\bf W}{\bf A}\right)^{k+1}{\bf W}{\bf Y}= \left({\bf W}{\bf A}\right)^k,~{\bf Y}{\bf W}{\bf Y} {\bf
W}{\bf A}   = {\bf Y},~ {\text {and}}~ \left({\bf Y}{\bf W}{\bf A}{\bf W}\right)^*={\bf Y}{\bf W}{\bf A}{\bf W}.\end{align*} We show that ${\bf Y}={\bf X}=\left[\left({\bf A}{\bf W}\right)_{\tiny\textcircled{\dag}}\right]^2{\bf A}$.
So,
 \begin{align*} {\bf Y}=&{\bf Y}{\bf W}{\bf Y} {\bf
W}{\bf A}   ={\bf Y}\left({\bf W}{\bf Y}\right)^2\left( {\bf
W}{\bf A}\right)^2={\bf Y}\left({\bf W}{\bf Y}\right)^k\left( {\bf
W}{\bf A}\right)^k=\\=&{\bf Y}\left({\bf W}{\bf Y}\right)^k\left({\bf W}{\bf A}\right)^{k+1}{\bf W}{\bf X}=\left({\bf Y}{\bf W}\right)^{k+1}\left({\bf A}{\bf W}\right)^{k+1}{\bf X}=\left({\bf Y}{\bf W}{\bf A}{\bf W}\right)^{k+1}{\bf X}=\\=&
\left(\left[{\left({\bf A}{\bf W}\right)^k}\right]^{\dag}\left({\bf A}{\bf W}\right)^k\right)^{k+1}{\bf X}=\left[{\left({\bf A}{\bf W}\right)^{2k+1}}\right]^{\dag}\left({\bf A}{\bf W}\right)^{2k+1}\left[\left({\bf A}{\bf W}\right)_{\tiny\textcircled{\dag}}\right]^2{\bf A}.\end{align*} By Theorem \ref{th1:lcep}, $\left({\bf A}{\bf W}\right)_{\tiny\textcircled{\dag}}=\left[{\left({\bf A}{\bf W}\right)^{2k+1}}\right]^{\dag}\left({\bf A}{\bf W}\right)^{2k+1}\left({\bf A}{\bf W}\right)^d.$
So,
 \begin{align*} {\bf Y}=&\left[{\left({\bf A}{\bf W}\right)^{2k+1}}\right]^{\dag}\left({\bf A}{\bf W}\right)^{2k+1}\left[{\left({\bf A}{\bf W}\right)^{2k+1}}\right]^{\dag}\left({\bf A}{\bf W}\right)^{2k+1}\left({\bf A}{\bf W}\right)^d\left({\bf A}{\bf W}\right)_{\tiny\textcircled{\dag}}{\bf A}=\\=&\left[{\left({\bf A}{\bf W}\right)^{2k+1}}\right]^{\dag}\left({\bf A}{\bf W}\right)^{2k+1}\left({\bf A}{\bf W}\right)^d\left({\bf A}{\bf W}\right)_{\tiny\textcircled{\dag}}{\bf A}=\left[\left({\bf A}{\bf W}\right)_{\tiny\textcircled{\dag}}\right]^2{\bf A}.
\end{align*}
Finally, from the uniqueness of  ${\bf X}$ follows $(iii)\mapsto (i)$.
\end{proof}

Now, we give determinantal representations of the quaternion W-weighted
core-EP  inverses.

\begin{thm}\label{th:drrcep}
Suppose ${\bf A}\in  {\mathbb{H}}^{m \times n}$,  ${\bf W}\in  {\mathbb{H}}^{n \times m}$, and $k=\max\{\Ind ({\bf W}{\bf A}),$ $\Ind ({\bf A}{\bf W})\}$,  $\rk ({\bf W}{\bf A})^k=s$.
Denote ${\bf W}{\bf A}:={\bf U}=\left(u_{ij}\right)\in  {\mathbb{H}}^{n \times n}$. Then
 the right weighted core-EP inverse
 ${\bf A}^{\tiny\textcircled{\dag},W,r}=\left(a_{ij}^{\tiny\textcircled{\dag},W,r}\right)\in  {\mathbb{H}}^{m\times n}$
 possess the  determinantal representations
 \begin{align}\label{eq:detrep_rwepcorep}
a_{ij}^{\tiny\textcircled{\dag},W,r}=&
 \frac{\sum\limits_{\alpha \in I_{s,n} {\left\{ {j}
\right\}}} {{\rm{rdet}} _{j} \left( {\left( { {\bf U}^{ k+1}\left({\bf U}^{k+1} \right)^{*}
} \right)_{j.} (\widetilde{\phi}_{i.})} \right)_{\alpha} ^{\alpha} } }{{\left({\sum\limits_{\alpha \in I_{s,n}} {{\left|   {\bf U}^{k+1}\left({\bf U}^{k+1}\right)^{*}
  \right|_{\alpha} ^{\alpha}}}}\right)^2
 }},
\end{align}
 where ${\widetilde {\bf \phi}}_{i.}$
 is  the $i$-th row of $\widetilde{{\bf \Phi}}={\bf \Phi}{\bf U}^{ k}\left({\bf U}^{ k+1}\right)^{*}$. The matrix is determined ${\bf \Phi}=\left(\phi_{if}\right)$ as follows

 \begin{align*}\phi_{if}=\sum\limits_{\alpha \in I_{s,n} {\left\{ {f}
\right\}}} {{\rm{rdet}} _{f} \left( {\left( { {\bf U}^{ k+1}\left({\bf U}^{k+1} \right)^{*}
} \right)_{f.} ({\widetilde {\bf u}}_{i.})} \right)_{\alpha} ^{\alpha} }, \end{align*}
 where
 $\widetilde{{\bf u}}_{i\,.}$ is the $i$-th row of $\widetilde{{\bf U}}=
{\bf A}{\bf U}^{ k}\left({\bf U}^{ k+1}\right)^{*}$.
\end{thm}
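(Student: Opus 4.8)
The plan is to start from the closed form \eqref{eq:rep rwcep}, $ {\bf X}={\bf A}\left[\coepr{\bf U}\right]^{2}$ with $ {\bf U}={\bf W}{\bf A}$, and to convert the two matrix products into operations on row determinants by means of the left-linearity of Lemma \ref{lem:left_rdet}. Before that I would check the hypotheses of Theorem \ref{th:lcep} for $ {\bf U}$: since $k=\max\{\Ind({\bf W}{\bf A}),\Ind({\bf A}{\bf W})\}\ge\Ind({\bf U})$, we have $\rk {\bf U}^{k+1}=\rk {\bf U}^{k}=s$, and because $\coepr{\bf U}={\bf U}^{d}{\bf U}^{l}({\bf U}^{l})^{\dag}$ does not depend on $l\ge\Ind {\bf U}$, the determinantal representation \eqref{eq:detrep_repcorep} stays valid with the exponent $k$ in place of the index. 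Thus I may write $\coepr{\bf U}=(g_{lf})$, where $g_{lf}$ is the right-hand side of \eqref{eq:detrep_repcorep} applied to $ {\bf U}$, with common denominator $d:=\sum_{\alpha\in I_{s,n}}\left|{\bf U}^{k+1}({\bf U}^{k+1})^{*}\right|_{\alpha}^{\alpha}$ and numerator built from the row $\hat{\bf u}_{l.}$ of $\hat{\bf U}={\bf U}^{k}({\bf U}^{k+1})^{*}$.

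The crux is the first product $ {\bf A}\,\coepr{\bf U}$. Its $(i,f)$ entry is $\sum_{l}a_{il}g_{lf}$, in which each quaternion scalar $a_{il}$ stands on the left of a row determinant whose $f$th row has been replaced by $\hat{\bf u}_{l.}$. Applying Lemma \ref{lem:left_rdet} to each principal minor indexed by $\alpha$ and summing over $\alpha$ pulls the scalars $a_{il}$ inside, so the inserted row becomes $\sum_{l}a_{il}\hat{\bf u}_{l.}$, which is precisely the $i$th row of $ {\bf A}\hat{\bf U}={\bf A}{\bf U}^{k}({\bf U}^{k+1})^{*}=\widetilde{\bf U}$. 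Consequently $ {\bf A}\,\coepr{\bf U}=\Phi/d$, where $\Phi=(\phi_{if})$ is exactly the matrix of the statement, with $\widetilde{\bf u}_{i.}$ the $i$th row of $\widetilde{\bf U}$.

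Next I would form $ {\bf X}=\bigl({\bf A}\,\coepr{\bf U}\bigr)\coepr{\bf U}=\tfrac{1}{d}\,\Phi\,\coepr{\bf U}$, whose $(i,j)$ entry is $\tfrac{1}{d}\sum_{f}\phi_{if}g_{fj}$. A second use of Lemma \ref{lem:left_rdet}, now with the scalars $\phi_{if}$ on the left of the row determinants defining $g_{fj}$, replaces the inserted row $\hat{\bf u}_{f.}$ by $\sum_{f}\phi_{if}\hat{\bf u}_{f.}$, which is the $i$th row of $\Phi\hat{\bf U}=\Phi {\bf U}^{k}({\bf U}^{k+1})^{*}=\widetilde{\bf \Phi}$. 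This yields exactly \eqref{eq:detrep_rwepcorep}, the factor $\tfrac{1}{d}$ coming from each of the two products combining into the square $d^{2}$ in the denominator.

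The main obstacle I anticipate is the bookkeeping in the two applications of Lemma \ref{lem:left_rdet} rather than any deep identity: one must confirm that the row-determinant linearity may be carried out under the sum over the principal-minor index $\alpha\in I_{s,n}\{\cdot\}$, that the scalars $a_{il}$ and $\phi_{if}$ genuinely occur on the \emph{left} in each term (so that left-linearity, not some right version, applies), and that $ {\bf A}\hat{\bf U}$ and $\Phi\hat{\bf U}$ really have $\widetilde{\bf u}_{i.}$ and $\widetilde{\phi}_{i.}$ as their $i$th rows. A secondary point to justify carefully is the replacement of $\Ind{\bf U}$ by the possibly larger $k$ in \eqref{eq:detrep_repcorep}, which rests on the power-independence of $\coepr{\bf U}$ recorded at the outset.
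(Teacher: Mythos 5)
Your proposal is correct and takes essentially the same route as the paper: both expand ${\bf A}\left[{\bf U}^{\tiny\textcircled{\dag}}\right]^{2}$ entrywise, insert the determinantal representation \eqref{eq:detrep_repcorep} for ${\bf U}^{\tiny\textcircled{\dag}}$, and use left-linearity of the row determinant (Lemma \ref{lem:left_rdet}) twice to fold the scalar sums $\sum_{l}a_{il}(\cdot)$ and $\sum_{f}\phi_{if}(\cdot)$ into the replaced rows, yielding $\Phi$, then $\widetilde{\Phi}$, and the squared denominator. Your explicit justification for using the possibly larger exponent $k\geq \Ind({\bf U})$ in \eqref{eq:detrep_repcorep} (via power-independence of the core-EP inverse) is a point the paper passes over silently, but it does not change the argument.
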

\begin{proof}
Taking into account (\ref{eq:rep rwcep}) for ${\bf A}^{\tiny\textcircled{\dag},W,r}$, we get
\begin{align}\label{eq:rwcep_s}
a_{ij}^{\tiny\textcircled{\dag},W,r} =&  \sum\limits_{l = 1}^{n} \sum\limits_{f = 1}^{n} {a}_{il}u_{lf}^{\tiny\textcircled{\dag},r}u_{fj}^{\tiny\textcircled{\dag},r}. \end{align}
Using the determinantal representation (\ref{eq:detrep_repcorep}) of ${\bf U}^{\tiny\textcircled{\dag}}$ gives
\begin{align*}\phi_{if}= &\sum\limits_{l = 1}^{n} {a}_{il}u_{lf}^{\tiny\textcircled{\dag},r}= \frac{\sum\limits_{l = 1}^{n} {a}_{il}
\sum\limits_{\alpha \in I_{s,n} {\left\{ {f}
\right\}}} {{\rm{rdet}} _{f} \left( {\left( { {\bf U}^{ k+1}\left({\bf U}^{k+1} \right)^{*}
} \right)_{f.} ({\hat {\bf u}}_{l.})} \right)_{\alpha} ^{\alpha} } }{{{\sum\limits_{\alpha \in I_{s,n}} {{\left|   {\bf U}^{k+1}\left({\bf U}^{k+1}\right)^{*}
  \right|_{\alpha} ^{\alpha}}}} }}=\\=&\frac{
\sum\limits_{\alpha \in I_{s,n} {\left\{ {f}
\right\}}} {{\rm{rdet}} _{f} \left( {\left( { {\bf U}^{ k+1}\left({\bf U}^{k+1} \right)^{*}
} \right)_{f.} ({\widetilde {\bf u}}_{i.})} \right)_{\alpha} ^{\alpha} } }{{{\sum\limits_{\alpha \in I_{s,n}} {{\left|   {\bf U}^{k+1}\left({\bf U}^{k+1}\right)^{*}
  \right|_{\alpha} ^{\alpha}}}} }},
 \end{align*}
where ${\hat {\bf u}}_{l.}$ is the $l$-th row of   $\widehat{{\bf U}}={\bf U}^{k}\left({\bf U}^{k+1}\right)^{*}$ and ${\widetilde {\bf u}}_{i.}$ is the $i$-th row of   $\widetilde{{\bf U}}={\bf A}{\bf U}^{k}\left({\bf U}^{k+1}\right)^{*}$.

 Denote
 \begin{align*}\phi_{if}=\sum\limits_{\alpha \in I_{s,n} {\left\{ {f}
\right\}}} {{\rm{rdet}} _{f} \left( {\left( { {\bf U}^{ k+1}\left({\bf U}^{k+1} \right)^{*}
} \right)_{f.} ({\widetilde {\bf u}}_{i.})} \right)_{\alpha} ^{\alpha} }. \end{align*}
 Substituting $\phi_{if}$ into  (\ref{eq:rwcep_s}) gives
 \begin{align*}
 a_{ij}^{\tiny\textcircled{\dag},W,r} =&\sum\limits_{l = 1}^{n} \phi_{if}u_{fj}^{\tiny\textcircled{\dag},r}= \frac{ \sum\limits_{f = 1}^{n}\phi_{if}
\sum\limits_{\alpha \in I_{s,n} {\left\{ {f}
\right\}}} {{\rm{rdet}} _{j} \left( {\left( { {\bf U}^{ k+1}\left({\bf U}^{k+1} \right)^{*}
} \right)_{j.} ({\widehat {\bf u}}_{f.})} \right)_{\alpha} ^{\alpha} } }{\left({{\sum\limits_{\alpha \in I_{s,n}} {{\left|   {\bf U}^{k+1}\left({\bf U}^{k+1}\right)^{*}
  \right|_{\alpha} ^{\alpha}}}} }\right)^2}.
  \end{align*}
Putting $\sum\limits_{f = 1}^{n}\phi_{if}{\widehat {\bf u}}_{f.}={\widetilde {\bf \phi}}_{i.}$
 as  the $i$-th row of $\widetilde{{\bf \Phi}}={\bf \Phi}{\bf U}^{ k}\left({\bf U}^{ k+1}\right)^{*}$ yields (\ref{eq:detrep_rwepcorep}).
\end{proof}

Similarly,  it can be proved theorem on the determinantal representation of the quaternion W-weighted left core-EP  inverse.

\begin{thm}\label{th:drlcep}
Suppose ${\bf A}\in  {\mathbb{H}}^{m \times n}$,  ${\bf W}\in  {\mathbb{H}}^{n \times m}$, and $k=\max\{\Ind ({\bf W}{\bf A}),$ $\Ind ({\bf A}{\bf W})\}$,  $\rk ({\bf A}{\bf W})^k=s$.
Denote ${\bf A}{\bf W}:={\bf V}=\left(v_{ij}\right)\in  {\mathbb{H}}^{m \times m}$. Then
 the right weighted core-EP inverse
 ${\bf A}^{\tiny\textcircled{\dag},W,l}=\left(a_{ij}^{\tiny\textcircled{\dag},W,l}\right)\in  {\mathbb{H}}^{n\times m}$
 possess the  determinantal representations
 \begin{align*}
a_{ij}^{\tiny\textcircled{\dag},W,l}=&
 \frac{\sum\limits_{\beta \in J_{s,m} {\left\{ {i}
\right\}}} {{\rm{cdet}} _{i} \left( {\left( {\left({\bf V}^{k+1} \right)^{*} {\bf V}^{ k+1}
} \right)_{.i} (\widetilde{\psi}_{.j})} \right)_{\beta} ^{\beta} } }{{\left({\sum\limits_{\beta \in J_{s,m}} {{\left|  \left({\bf V}^{k+1}\right)^{*} {\bf V}^{k+1}
  \right|_{\beta} ^{\beta}}}}\right)^2
 }},
\end{align*}
 where ${\widetilde {\bf \psi}}_{.j}$
 is  the $j$-th column of $\widetilde{{\bf \Psi}}=\left({\bf V}^{ k+1}\right)^{*}{\bf V}^{ k}{\bf \Psi}$. The matrix is determined ${\bf \Psi}=\left(\psi_{lj}\right)\in {\mathbb{H}}^{m \times n}$ as follows
 \begin{align*}\psi_{lj}=\sum\limits_{\beta \in J_{s,m} {\left\{ {l}
\right\}}} {{\rm{cdet}} _{l} \left( {\left( { \left({\bf V}^{k+1} \right)^{*}{\bf V}^{ k+1}
} \right)_{.l} ({\widetilde {\bf v}}_{.j})} \right)_{\beta} ^{\beta} }, \end{align*}
 where
 $\widetilde{{\bf v}}_{.j}$ is the $j$-th column of $\widetilde{{\bf V}}=
\left({\bf V}^{ k+1}\right)^{*}{\bf V}^{ k}{\bf A}$.
\end{thm}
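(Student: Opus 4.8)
The plan is to reproduce the argument of Theorem~\ref{th:drrcep} for the left-handed object, exchanging throughout the representation (\ref{eq:rep rwcep}) for its transpose from Theorem~\ref{thm_wlcep}(i), namely ${\bf A}^{\tiny\textcircled{\dag},W,l}=\left[({\bf A}{\bf W})_{\tiny\textcircled{\dag}}\right]^2{\bf A}$, the row determinant $\mathrm{rdet}$ for the column determinant $\mathrm{cdet}$, and the determinantal representation (\ref{eq:detrep_repcorep}) of the right core-EP inverse for the representation (\ref{eq:detrep_lepcorep}) of the left one. Writing ${\bf V}:={\bf A}{\bf W}=(v_{ij})$ and denoting by $v_{ij}^{\tiny\textcircled{\dag},l}$ the entries of the left core-EP inverse ${\bf V}_{\tiny\textcircled{\dag}}$, the starting identity is
\[
a_{ij}^{\tiny\textcircled{\dag},W,l}=\sum_{l=1}^{m}\sum_{t=1}^{m} v_{il}^{\tiny\textcircled{\dag},l}\,v_{lt}^{\tiny\textcircled{\dag},l}\,a_{tj},
\]
which I would read as ${\bf V}_{\tiny\textcircled{\dag}}\bigl({\bf V}_{\tiny\textcircled{\dag}}{\bf A}\bigr)$.

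First I would treat the inner factor ${\bf \Psi}:={\bf V}_{\tiny\textcircled{\dag}}{\bf A}$, whose entries are $\psi_{lj}=\sum_{t} v_{lt}^{\tiny\textcircled{\dag},l}a_{tj}$. Substituting (\ref{eq:detrep_lepcorep}) for $v_{lt}^{\tiny\textcircled{\dag},l}$, the scalars $a_{tj}$ stand to the right of the cofactor columns $\check{{\bf v}}_{.t}$, the $t$-th columns of $\check{{\bf V}}=({\bf V}^{k+1})^{*}{\bf V}^{k}$. Hence the right-linearity of the column determinant (Lemma~\ref{lem:right_cdet}) lets me pull the sum inside and replace $\sum_{t}\check{{\bf v}}_{.t}a_{tj}$ by the $j$-th column $\widetilde{{\bf v}}_{.j}$ of $\widetilde{{\bf V}}=({\bf V}^{k+1})^{*}{\bf V}^{k}{\bf A}$. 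Absorbing the common real scalar $\det$ exactly as in the proof of Theorem~\ref{th:drrcep}, this produces the asserted formula for $\psi_{lj}$.

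Next I would insert the matrix ${\bf \Psi}=(\psi_{lj})$ into the outer factor via $a_{ij}^{\tiny\textcircled{\dag},W,l}=\sum_{l} v_{il}^{\tiny\textcircled{\dag},l}\psi_{lj}$ and substitute (\ref{eq:detrep_lepcorep}) once more for $v_{il}^{\tiny\textcircled{\dag},l}$. A second application of Lemma~\ref{lem:right_cdet}, now with the $\psi_{lj}$ acting from the right on the columns $\check{{\bf v}}_{.l}$, collapses $\sum_{l}\check{{\bf v}}_{.l}\psi_{lj}$ into the $j$-th column $\widetilde{{\bf \psi}}_{.j}$ of $\widetilde{{\bf \Psi}}=({\bf V}^{k+1})^{*}{\bf V}^{k}{\bf \Psi}$. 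The two denominators then combine into the squared determinant $\bigl(\sum_{\beta\in J_{s,m}}|({\bf V}^{k+1})^{*}{\bf V}^{k+1}|_{\beta}^{\beta}\bigr)^{2}$, which is the claimed expression.

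The only delicate point — and the sole place where noncommutativity of ${\mathbb{H}}$ is felt — is the order of multiplication: Lemma~\ref{lem:right_cdet} applies only when the summed quantities sit to the \emph{right} of the replaced cofactor columns. I must therefore group $\left[({\bf A}{\bf W})_{\tiny\textcircled{\dag}}\right]^2{\bf A}$ so that the $v^{\tiny\textcircled{\dag}}$-factors always stand on the left and the quantities $a_{tj}$, respectively $\psi_{lj}$, always appear on the right; the denominator $\sum_{\beta}|({\bf V}^{k+1})^{*}{\bf V}^{k+1}|_{\beta}^{\beta}$ is a real (hence central) scalar and may be moved past the quaternionic entries freely. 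With this convention each step is the verbatim column-determinant mirror of the corresponding row-determinant step in Theorem~\ref{th:drrcep}, so no new estimate is needed.
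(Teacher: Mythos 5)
Your proposal is correct and is essentially the paper's own argument: the paper gives no separate proof of Theorem \ref{th:drlcep}, stating only that it is proved ``similarly'' to Theorem \ref{th:drrcep}, and your mirrored derivation --- grouping $\left[({\bf A}{\bf W})_{\tiny\textcircled{\dag}}\right]^2{\bf A}$ as ${\bf V}_{\tiny\textcircled{\dag}}\left({\bf V}_{\tiny\textcircled{\dag}}{\bf A}\right)$, substituting (\ref{eq:detrep_lepcorep}), and applying the right-linearity of ${\rm{cdet}}$ (Lemma \ref{lem:right_cdet}) twice while keeping the scalars $a_{tj}$ and $\psi_{lj}$ on the right of the cofactor columns --- is exactly that intended mirror of the proof of Theorem \ref{th:drrcep}. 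Your handling of the denominators (real, hence central, and combining into the square) is likewise the same bookkeeping used there, so the proposal matches the paper's approach.
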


It's evident that  the following corollaries can be get in the complex case.
\begin{cor}
Suppose ${\bf A}\in  {\mathbb{C}}^{m \times n}$,  ${\bf W}\in  {\mathbb{C}}^{n \times m}$, and $k=\max\{\Ind ({\bf W}{\bf A}),$ $\Ind ({\bf A}{\bf W})\}$,  $\rk ({\bf W}{\bf A})^k=s$.
Denote ${\bf W}{\bf A}:={\bf U}=\left(u_{ij}\right)\in  {\mathbb{C}}^{n \times n}$. Then
 the right weighted core-EP inverse
 ${\bf A}^{\tiny\textcircled{\dag},W,r}=\left(a_{ij}^{\tiny\textcircled{\dag},W,r}\right)\in  {\mathbb{C}}^{m\times n}$
 possess the  determinantal representations
 \begin{align*}
a_{ij}^{\tiny\textcircled{\dag},W,r}=&
 \frac{\sum\limits_{\alpha \in I_{s,n} {\left\{ {j}
\right\}}} { \left| {\left( { {\bf U}^{ k+1}\left({\bf U}^{k+1} \right)^{*}
} \right)_{j.} (\widetilde{\phi}_{i.})} \right|_{\alpha} ^{\alpha} } }{{\left({\sum\limits_{\alpha \in I_{s,n}} {{\left|   {\bf U}^{k+1}\left({\bf U}^{k+1}\right)^{*}
  \right|_{\alpha} ^{\alpha}}}}\right)^2
 }},
\end{align*}
 where ${\widetilde {\bf \phi}}_{i.}$
 is  the $i$-th row of $\widetilde{{\bf \Phi}}={\bf \Phi}{\bf U}^{ k}\left({\bf U}^{ k+1}\right)^{*}$. The matrix is determined ${\bf \Phi}=\left(\phi_{if}\right)$ as follows

 \begin{align*}\phi_{if}=\sum\limits_{\alpha \in I_{s,n} {\left\{ {f}
\right\}}} { \left| {\left( { {\bf U}^{ k+1}\left({\bf U}^{k+1} \right)^{*}
} \right)_{f.} ({\widetilde {\bf u}}_{i.})} \right|_{\alpha} ^{\alpha} }, \end{align*}
 where
 $\widetilde{{\bf u}}_{i\,.}$ is the $i$-th row of $\widetilde{{\bf U}}=
{\bf A}{\bf U}^{ k}\left({\bf U}^{ k+1}\right)^{*}$.
\end{cor}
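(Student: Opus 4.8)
The plan is to deduce this statement directly from Theorem~\ref{th:drrcep} by specializing the quaternion row determinant to the ordinary determinant. Since ${\mathbb{C}}^{m\times n}\subset{\mathbb{H}}^{m\times n}$ and ${\mathbb{C}}^{n\times m}\subset{\mathbb{H}}^{n\times m}$, a complex ${\bf A}$ together with a complex ${\bf W}$ satisfies the hypotheses of Theorem~\ref{th:drrcep}, so the representation (\ref{eq:detrep_rwepcorep}) for $a_{ij}^{\tiny\textcircled{\dag},W,r}$ already holds, with every matrix now having complex entries and ${\bf U}={\bf W}{\bf A}\in{\mathbb{C}}^{n\times n}$.

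The one remaining task is to replace each occurrence of ${\rm{rdet}}_j$ by the classical determinant. First I would record the elementary fact that for any ${\bf M}\in{\mathbb{C}}^{n\times n}$ and any $j$ one has ${\rm{rdet}}_j{\bf M}=|{\bf M}|$. This follows straight from the definition of the row determinant: when the entries commute, the product of the factors along each cycle of the left-ordered permutation $\sigma$ no longer depends on the order in which the factors are taken, so the contribution of $\sigma$ collapses to $\prod_{i} m_{i\,\sigma(i)}$; moreover the weight $(-1)^{n-r}$, with $r$ the number of disjoint cycles of $\sigma$, is exactly ${\rm sgn}\,\sigma$. Hence the defining sum reduces to the Leibniz expansion of $|{\bf M}|$. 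The same pattern already underlies the passage from Theorem~\ref{th:det_rep_mp} to Corollary~\ref{cor:det_repr_MP_c}.

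With this identity in hand I would apply it to every factor appearing in (\ref{eq:detrep_rwepcorep}): the numerator ${\rm{rdet}}_j\!\left(\left({\bf U}^{k+1}({\bf U}^{k+1})^*\right)_{j.}(\widetilde{\phi}_{i.})\right)$, each entry $\phi_{if}$, and the principal minors in both denominators. The denominators cause no trouble, being already principal minors of the Hermitian matrix ${\bf U}^{k+1}({\bf U}^{k+1})^*$, for which the row determinant coincides with $\det$ in any case. The only point meriting a sentence of justification is that the reduction ${\rm{rdet}}_j=|\cdot|$ must be applied to the non-Hermitian matrices produced after replacing the $j$th row by $\widetilde{\phi}_{i.}$ (and likewise the $f$th row by $\widetilde{\bf u}_{i.}$ in $\phi_{if}$); but since the identity holds for every complex square matrix and not merely for Hermitian ones, this presents no obstacle. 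Substituting the resulting ordinary determinants into (\ref{eq:detrep_rwepcorep}) yields precisely the asserted formula.
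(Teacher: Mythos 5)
Your proposal is correct and follows exactly the route the paper intends: the paper states this corollary with no written proof beyond the remark that it is ``evident'' from Theorem~\ref{th:drrcep}, i.e.\ by viewing ${\mathbb{C}}\subset{\mathbb{H}}$ and collapsing each ${\rm{rdet}}_j$ to the ordinary determinant, which is precisely your argument (including the sign identity $(-1)^{n-r}={\rm sgn}\,\sigma$ for a permutation with $r$ cycles). You have simply made explicit the commutative-reduction step that the paper leaves implicit, as it also does in passing from Theorem~\ref{th:det_rep_mp} to Corollary~\ref{cor:det_repr_MP_c}.
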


\begin{cor}
Suppose ${\bf A}\in  {\mathbb{C}}^{m \times n}$,  ${\bf W}\in  {\mathbb{C}}^{n \times m}$, and $k=\max\{\Ind ({\bf W}{\bf A}),$ $\Ind ({\bf A}{\bf W})\}$,  $\rk ({\bf A}{\bf W})^k=s$.
Denote ${\bf A}{\bf W}:={\bf V}=\left(v_{ij}\right)\in  {\mathbb{C}}^{m \times m}$. Then
 the right weighted core-EP inverse
 ${\bf A}^{\tiny\textcircled{\dag},W,l}=\left(a_{ij}^{\tiny\textcircled{\dag},W,l}\right)\in  {\mathbb{H}}^{n\times m}$
 possess the  determinantal representations
 \begin{align*}
a_{ij}^{\tiny\textcircled{\dag},W,l}=&
 \frac{\sum\limits_{\beta \in J_{s,m} {\left\{ {i}
\right\}}} { \left| {\left( {\left({\bf V}^{k+1} \right)^{*} {\bf V}^{ k+1}
} \right)_{.i} (\widetilde{\psi}_{.j})} \right|_{\beta} ^{\beta} } }{{\left({\sum\limits_{\beta \in J_{s,m}} {{\left|  \left({\bf V}^{k+1}\right)^{*} {\bf V}^{k+1}
  \right|_{\beta} ^{\beta}}}}\right)^2
 }},
\end{align*}
 where ${\widetilde {\bf \psi}}_{.j}$
 is  the $j$-th column of $\widetilde{{\bf \Psi}}=\left({\bf V}^{ k+1}\right)^{*}{\bf V}^{ k}{\bf \Psi}$. The matrix is determined ${\bf \Psi}=\left(\psi_{lj}\right)\in {\mathbb{H}}^{m \times n}$ as follows
 \begin{align*}\psi_{lj}=\sum\limits_{\beta \in J_{s,m} {\left\{ {l}
\right\}}} { \left| {\left( { \left({\bf V}^{k+1} \right)^{*}{\bf V}^{ k+1}
} \right)_{.l} ({\widetilde {\bf v}}_{.j})} \right|_{\beta} ^{\beta} }, \end{align*}
 where
 $\widetilde{{\bf v}}_{.j}$ is the $j$-th column of $\widetilde{{\bf V}}=
\left({\bf V}^{ k+1}\right)^{*}{\bf V}^{ k}{\bf A}$.
\end{cor}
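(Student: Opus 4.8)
The plan is to run the argument of Theorem~\ref{th:drrcep} in its column/row\nobreakdash-dual form, replacing the row determinant by the column determinant and Lemma~\ref{lem:left_rdet} (left linearity) by Lemma~\ref{lem:right_cdet} (right linearity) throughout. Set ${\bf V}={\bf A}{\bf W}\in{\mathbb{H}}^{m\times m}$. Since $k=\max\{\Ind({\bf W}{\bf A}),\Ind({\bf A}{\bf W})\}\ge\Ind({\bf V})$, we have $\rk({\bf V}^{k+1})=\rk({\bf V}^{k})=s$, so the left core-EP inverse ${\bf V}_{\tiny\textcircled{\dag}}=\left({\bf A}{\bf W}\right)_{\tiny\textcircled{\dag}}$ exists and its entries $v_{ij}^{\tiny\textcircled{\dag},l}$ are given by the determinantal representation (\ref{eq:detrep_lepcorep}) of Theorem~\ref{th:lcep} applied to ${\bf V}$ with the exponent $k$ (legitimate because, by Theorem~\ref{th1:lcep}, the left core-EP inverse is recovered from any power $\ge\Ind({\bf V})$), with replaced column $\check{\bf v}_{.j}$ equal to the $j$-th column of $\check{\bf V}=({\bf V}^{k+1})^{*}{\bf V}^{k}$ and common denominator $\sum_{\beta\in J_{s,m}}\left|({\bf V}^{k+1})^{*}{\bf V}^{k+1}\right|_{\beta}^{\beta}$. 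By Theorem~\ref{thm_wlcep}(i), ${\bf A}^{\tiny\textcircled{\dag},W,l}=\left[{\bf V}_{\tiny\textcircled{\dag}}\right]^{2}{\bf A}$, which I would expand entrywise as
\begin{equation*}
a_{ij}^{\tiny\textcircled{\dag},W,l}=\sum_{p=1}^{m}\sum_{t=1}^{m} v_{ip}^{\tiny\textcircled{\dag},l}\,v_{pt}^{\tiny\textcircled{\dag},l}\,a_{tj}.
\end{equation*}

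The heart of the proof is two nested applications of the right-linearity of the column determinant, carried out from the inside out. First I would treat the inner product ${\bf V}_{\tiny\textcircled{\dag}}{\bf A}$: setting $\psi_{pj}=\sum_{t=1}^{m} v_{pt}^{\tiny\textcircled{\dag},l}a_{tj}$, inserting (\ref{eq:detrep_lepcorep}) for $v_{pt}^{\tiny\textcircled{\dag},l}$, and passing the quaternion scalars $a_{tj}$ through ${\rm cdet}_{p}$ from the right, Lemma~\ref{lem:right_cdet} collapses the $t$-sum, since $\sum_{t}\check{\bf v}_{.t}a_{tj}$ is exactly the $j$-th column of $\check{\bf V}{\bf A}=({\bf V}^{k+1})^{*}{\bf V}^{k}{\bf A}=\widetilde{\bf V}$. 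This turns the replaced column $\check{\bf v}_{.j}$ into $\widetilde{\bf v}_{.j}$ and, after stripping the common denominator, identifies the numerators with the entries of the matrix ${\bf \Psi}=(\psi_{pj})\in{\mathbb{H}}^{m\times n}$ of the statement. Substituting ${\bf \Psi}$ into $a_{ij}^{\tiny\textcircled{\dag},W,l}=\sum_{p} v_{ip}^{\tiny\textcircled{\dag},l}\psi_{pj}$ and invoking Lemma~\ref{lem:right_cdet} once more, the scalars $\psi_{pj}$ pass through ${\rm cdet}_{i}$ from the right; since $\sum_{p}\check{\bf v}_{.p}\psi_{pj}$ is the $j$-th column of $\check{\bf V}{\bf \Psi}=({\bf V}^{k+1})^{*}{\bf V}^{k}{\bf \Psi}=\widetilde{\bf \Psi}$, the replaced column becomes $\widetilde{\psi}_{.j}$. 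The two invocations of (\ref{eq:detrep_lepcorep}) each contribute a factor of the denominator $\sum_{\beta\in J_{s,m}}\left|({\bf V}^{k+1})^{*}{\bf V}^{k+1}\right|_{\beta}^{\beta}$, so the final denominator is its square, and we arrive at the claimed representation.

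The step that demands the most care is the noncommutative bookkeeping of the side of multiplication. Because quaternion scalars do not commute, Lemma~\ref{lem:right_cdet} applies only when the scalars $a_{tj}$ and $\psi_{pj}$ stand to the right of the column vectors being replaced; this is precisely why the column-determinant representation (\ref{eq:detrep_lepcorep}) of the left core-EP inverse, rather than the row-determinant one, is the appropriate tool here, dual to the role of Lemma~\ref{lem:left_rdet} and (\ref{eq:detrep_repcorep}) in Theorem~\ref{th:drrcep}. One should also confirm that the index ranges ($i,p\in\{1,\dots,m\}$, while $t\in\{1,\dots,m\}$ and $j\in\{1,\dots,n\}$) are consistent with the dimensions ${\bf \Psi}\in{\mathbb{H}}^{m\times n}$ and ${\bf A}^{\tiny\textcircled{\dag},W,l}\in{\mathbb{H}}^{m\times n}$, and that the two common denominators combine cleanly into the square. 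Once these two linearity reductions are performed in the correct order, comparison with the right-hand sides of the stated identities is immediate, and the left-inverse theorem follows by the same mechanism as its right counterpart.
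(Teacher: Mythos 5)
Your proposal is correct and is essentially the paper's own argument: the paper proves the right-inverse case (Theorem \ref{th:drrcep}) by expanding ${\bf A}[({\bf W}{\bf A})^{\tiny\textcircled{\dag}}]^2$ entrywise, inserting the determinantal representation (\ref{eq:detrep_repcorep}) and collapsing the sums with row-determinant linearity, and then asserts that the left case and its complex specialization follow dually — which is precisely the dualization you carry out, using Theorem \ref{thm_wlcep}(i), representation (\ref{eq:detrep_lepcorep}), and Lemma \ref{lem:right_cdet}, with the correct inner-then-outer order producing ${\bf \Psi}$, $\widetilde{\bf \Psi}$, and the squared denominator. Your explicit justification for using the exponent $k\geq\Ind({\bf A}{\bf W})$ in (\ref{eq:detrep_lepcorep}) via Theorem \ref{th1:lcep} is a point the paper leaves implicit, and over ${\mathbb{C}}$ the column determinants reduce to ordinary determinants, giving the stated corollary.
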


\section{The W-weighted  DMP and MPD inverses and their determinantal representations.}
 The concept of the DMP  inverse in complex matrices was introduced in \cite{mal1} by S. Malik and N. Thome that can be expended to quaternion matrices as follows.
\begin{defn}Suppose ${\bf A}\in  {\mathbb{H}}^{n\times n}$ and $\Ind {\bf A}=k$.
A matrix ${\bf X}\in  {\mathbb{H}}^{n\times n}$ is said to be the DMP inverse of $ {\bf A}$
if it satisfies the conditions
 \begin{align}\label{eq:def_dmp}
{\bf X}{\bf A}{\bf X}={\bf X},~{\bf X}{\bf A}={\bf A}^d{\bf A},~and~ {\bf A}^k{\bf X}={\bf A}^k{\bf A}^{\dag}. \end{align}
It is denoted $ {\bf A}^{d,{\dag}}$.
\end{defn}Similar as for complex matrices  \cite{mal1}, if a quaternion matrix  satisfies
 the system of equations (\ref{eq:def_dmp}), then it is unique and has the  representation,
$$
{\bf A}^{d,{\dag}}={\bf A}^{d}{\bf A}{\bf A}^{\dag}. $$

In \cite{ky_cor}, we also introduce the MPD inverse.
\begin{defn}\label{eq:def_mpd}
Suppose ${\bf A}\in  {\mathbb{H}}^{n\times n}$ and $\Ind {\bf A}=k$.
A matrix ${\bf X}\in  {\mathbb{H}}^{n\times n}$ is said to be the MPD inverse of $ {\bf A}$
if it satisfies the conditions
 \begin{align*}
{\bf X}{\bf A}{\bf X}={\bf X},~{\bf A}{\bf X}={\bf A}{\bf A}^d,~and~ {\bf X}{\bf A}^k={\bf A}^{\dag}{\bf A}^k. \end{align*}
It is denoted $ {\bf A}^{{\dag},d}$.
\end{defn}
It is not difficult to show that  $ {\bf A}^{{\dag},d}$ is unique and it can be represented as
$
{\bf A}^{{\dag},d}={\bf A}^{\dag}{\bf A}{\bf A}^{d}.$

In \cite{ky_cor}, we gave the determinantal representations of the  DMP  and MPD inverses over the quaternion skew field.

Recently in \cite{meng}, the definition of the DMP inverse of a square matrix with complex elements was extended to rectangular matrices. We  extend it over  the quaternion skew field.
\begin{defn}
Let ${\bf A}\in  {\mathbb{H}}^{m\times n}$ and ${\bf W}\in  {\mathbb{H}}^{n\times m}$ be a nonzero matrix. The W-weighted DMP (WDMP) inverse of ${\bf A}$ with respect to  ${\bf W}$ is defined as
\begin{align*}
{\bf A}^{d,\dag,W}={{\bf W}\bf A}^{d,W}{\bf W}{\bf A}{\bf A}^{\dag}.
\end{align*}
\end{defn}
Similarly to complex matrices  can be proved the next lemma.
\begin{lem}
Let ${\bf A}\in  {\mathbb{H}}^{n\times n}$ and  $k=\max\{\Ind ({\bf W}{\bf A}),$ $\Ind ({\bf A}{\bf W})\}$. The matrix  $
{\bf X}={\bf A}^{d,\dag,W}$ is the unique matrix that
satisfies the following system of equations
\begin{align}\label{eq:prop_wdmp}
{\bf X}{\bf A}{\bf X}={\bf X},~{\bf X}{\bf A}={\bf W}{\bf A}^{d,W}{\bf W}{\bf A},~and~ ({\bf W}{\bf A})^{k+1}{\bf X}=({\bf W}{\bf A})^{k+1}{\bf A}^{\dag}. \end{align}
\end{lem}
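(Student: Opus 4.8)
The plan is to treat existence and uniqueness separately, after first collapsing the weighted Drazin factor to an ordinary Drazin inverse so that the whole argument takes place inside the single square matrix ${\bf U}:={\bf W}{\bf A}\in{\mathbb{H}}^{n\times n}$ (note $k\ge\Ind({\bf W}{\bf A})=\Ind{\bf U}$). First I would record the key simplification, which comes from (\ref{eq:WDr_dr}) together with the Drazin identities ${\bf U}{\bf U}^{d}={\bf U}^{d}{\bf U}$ and ${\bf U}^{d}{\bf U}{\bf U}^{d}={\bf U}^{d}$:
\begin{align*}
{\bf W}{\bf A}^{d,W}={\bf W}{\bf A}\bigl(({\bf W}{\bf A})^{d}\bigr)^{2}={\bf U}({\bf U}^{d})^{2}={\bf U}^{d}{\bf U}{\bf U}^{d}={\bf U}^{d}=({\bf W}{\bf A})^{d}.
\end{align*}
Hence ${\bf X}={\bf A}^{d,\dag,W}={\bf U}^{d}{\bf U}{\bf A}^{\dag}$, and I would carry this compact form through the rest of the computation.

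For existence I would verify the three relations of (\ref{eq:prop_wdmp}) in the order second, first, third, since the latter two lean on the second. For the second relation, ${\bf X}{\bf A}={\bf U}^{d}{\bf U}{\bf A}^{\dag}{\bf A}$ collapses to ${\bf U}^{d}{\bf U}={\bf W}{\bf A}^{d,W}{\bf W}{\bf A}$: indeed $\mathcal{R}_{l}({\bf U}^{d}{\bf U})\subseteq\mathcal{R}_{l}({\bf A})$ because ${\bf U}^{d}{\bf U}=({\bf U}^{d}{\bf W}){\bf A}$, so right-multiplication by ${\bf A}^{\dag}{\bf A}$ is the identity (using only ${\bf A}{\bf A}^{\dag}{\bf A}={\bf A}$). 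For the first relation I would substitute the second to get ${\bf X}{\bf A}{\bf X}={\bf U}^{d}{\bf U}{\bf X}$ and then invoke the idempotency of ${\bf U}^{d}{\bf U}$ to recover ${\bf X}$. For the third, the power identity ${\bf U}^{k+1}{\bf U}^{d}={\bf U}^{k}$ (valid since $k\ge\Ind{\bf U}$) gives ${\bf U}^{k+1}{\bf X}={\bf U}^{k+1}{\bf A}^{\dag}=({\bf W}{\bf A})^{k+1}{\bf A}^{\dag}$.

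The hard part is the uniqueness, so this is the step I would scrutinize most. Suppose ${\bf Y}$ also satisfies (\ref{eq:prop_wdmp}). The second relation gives ${\bf Y}{\bf A}={\bf U}^{d}{\bf U}$, and feeding this into the first yields ${\bf Y}={\bf U}^{d}{\bf U}{\bf Y}$. The crucial manoeuvre is then to rewrite the projector, using commutativity of ${\bf U}^{d}$ with ${\bf U}$ and the idempotency of ${\bf U}^{d}{\bf U}$, as ${\bf U}^{d}{\bf U}=({\bf U}^{d}{\bf U})^{k+1}=({\bf U}^{d})^{k+1}{\bf U}^{k+1}$, which manufactures the left factor ${\bf U}^{k+1}$ needed to apply the third relation $({\bf W}{\bf A})^{k+1}{\bf Y}=({\bf W}{\bf A})^{k+1}{\bf A}^{\dag}$. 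Combining these,
\begin{align*}
{\bf Y}={\bf U}^{d}{\bf U}{\bf Y}=({\bf U}^{d})^{k+1}{\bf U}^{k+1}{\bf Y}=({\bf U}^{d})^{k+1}{\bf U}^{k+1}{\bf A}^{\dag}={\bf U}^{d}{\bf U}{\bf A}^{\dag}={\bf X},
\end{align*}
so ${\bf Y}={\bf X}$. The delicate point is precisely this passage from the third relation to ${\bf Y}$: that relation only controls ${\bf U}^{k+1}{\bf Y}$, so one cannot conclude without first reshaping ${\bf U}^{d}{\bf U}$ into a form carrying the factor ${\bf U}^{k+1}$ on the left of ${\bf Y}$.
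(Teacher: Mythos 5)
Your proof is correct. Note first that the paper never actually proves this lemma: it is stated with only the remark that it ``can be proved similarly to complex matrices'', and the one proof written out in that section is for the mirror statement about the weighted MPD inverse, Eqs.~(\ref{eq:wmpd}). Measured against that proof, your argument has the same two-part skeleton --- existence by direct verification of the three equations, uniqueness by raising an idempotent to the $(k+1)$-st power so that the third equation can be invoked --- but implements it by a genuinely different device. The paper works throughout in weighted notation: existence is checked from the defining equations of ${\bf A}^{d,W}$ in Definition~\ref{def:wdr} (e.g.\ ${\bf A}^{d,W}{\bf W}{\bf A}{\bf W}{\bf A}^{d,W}={\bf A}^{d,W}$) together with (\ref{eq:WDr_dr}), and uniqueness takes two solutions ${\bf X}_1,{\bf X}_2$ and runs the chain ${\bf X}_1={\bf X}_1{\bf A}{\bf X}_1=\cdots={\bf X}_1\left({\bf A}{\bf W}\right)^{k+1}\left({\bf A}^{d,W}{\bf W}\right)^{k+1}={\bf A}^{\dag}\left({\bf A}{\bf W}\right)^{k+1}\left({\bf A}^{d,W}{\bf W}\right)^{k+1}=\cdots={\bf X}_2$, meeting in the middle at the explicit formula. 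You instead collapse the weighted factor at the outset via ${\bf W}{\bf A}^{d,W}={\bf W}{\bf A}\bigl(({\bf W}{\bf A})^{d}\bigr)^{2}=({\bf W}{\bf A})^{d}$, which is legitimate by (\ref{eq:WDr_dr}), so the whole argument runs inside ${\bf U}={\bf W}{\bf A}$ using only ordinary Drazin identities, and your uniqueness shows directly that any solution equals the closed form ${\bf U}^{d}{\bf U}{\bf A}^{\dag}$. Your route buys shorter and more transparent algebra (each verification and the uniqueness chain become one-line computations with ${\bf U}^{d}$); the paper's route stays closer to the axioms of Definition~\ref{def:wdr} and is uniform with how the weighted Drazin inverse is manipulated in the determinantal-representation theorems that follow. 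Both arguments hinge on the same two facts: ${\bf W}{\bf A}^{d,W}{\bf W}{\bf A}$ is an idempotent built from commuting factors, and the exponent $k+1$ is admissible because $k\geq\Ind({\bf W}{\bf A})$.
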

We propose to introduce the weighted MPD inverse as well.
\begin{lem}
Let ${\bf A}\in  {\mathbb{H}}^{n\times n}$ and  $k=\max\{\Ind ({\bf W}{\bf A}),$ $\Ind ({\bf A}{\bf W})\}$. Then the matrix  $
{\bf X}={\bf A}^{\dag}{\bf A}{\bf W}{\bf A}^{d,W}{\bf W}$ is the unique solution to the equations
\begin{align}\label{eq:wmpd}
{\bf X}{\bf A}{\bf X}={\bf X},~{\bf A}{\bf X}={\bf A}{\bf W}{\bf A}^{d,W}{\bf W},~and~ {\bf X}({\bf A}{\bf W})^{k+1}={\bf A}^{\dag}({\bf A}{\bf W})^{k+1}. \end{align}
\end{lem}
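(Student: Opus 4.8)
The plan is to collapse the weighted Drazin factor to an ordinary Drazin inverse, after which the three equations of \eqref{eq:wmpd} will fall out almost mechanically. First I would record, from the representation \eqref{eq:WDr_dr}, the two collapsing identities
\begin{align*}
{\bf A}^{d,W}{\bf W}=({\bf A}{\bf W})^{d},\qquad {\bf W}{\bf A}^{d,W}=({\bf W}{\bf A})^{d},
\end{align*}
each obtained by writing ${\bf A}^{d,W}=\left(({\bf A}{\bf W})^{d}\right)^{2}{\bf A}$ (respectively ${\bf A}\left(({\bf W}{\bf A})^{d}\right)^{2}$) and applying the Drazin relations ${\bf C}{\bf C}^{d}={\bf C}^{d}{\bf C}$ and ${\bf C}^{d}{\bf C}{\bf C}^{d}={\bf C}^{d}$ to ${\bf C}={\bf A}{\bf W}$ (respectively ${\bf C}={\bf W}{\bf A}$). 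Only the first is strictly needed: it rewrites the candidate as
\begin{align*}
{\bf X}={\bf A}^{\dag}{\bf A}{\bf W}{\bf A}^{d,W}{\bf W}={\bf A}^{\dag}{\bf A}{\bf W}({\bf A}{\bf W})^{d}={\bf A}^{\dag}({\bf A}{\bf W})({\bf A}{\bf W})^{d},
\end{align*}
so that, setting ${\bf E}:=({\bf A}{\bf W})({\bf A}{\bf W})^{d}$, which is idempotent, one has ${\bf X}={\bf A}^{\dag}{\bf E}$.

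To prove existence I would check the three equations in turn. The middle one is immediate: ${\bf A}{\bf X}={\bf A}{\bf A}^{\dag}{\bf A}{\bf W}{\bf A}^{d,W}{\bf W}={\bf A}{\bf W}{\bf A}^{d,W}{\bf W}$ by ${\bf A}{\bf A}^{\dag}{\bf A}={\bf A}$, and this value is exactly ${\bf E}$. The outer-inverse equation then reads ${\bf X}{\bf A}{\bf X}={\bf X}({\bf A}{\bf X})={\bf A}^{\dag}{\bf E}{\bf E}={\bf A}^{\dag}{\bf E}={\bf X}$, using ${\bf E}^{2}={\bf E}$. For the third, since $({\bf A}{\bf W})^{d}({\bf A}{\bf W})^{k+1}=({\bf A}{\bf W})^{k}$ for $k\geq\Ind({\bf A}{\bf W})$, I would compute ${\bf E}({\bf A}{\bf W})^{k+1}=({\bf A}{\bf W})({\bf A}{\bf W})^{k}=({\bf A}{\bf W})^{k+1}$, whence ${\bf X}({\bf A}{\bf W})^{k+1}={\bf A}^{\dag}{\bf E}({\bf A}{\bf W})^{k+1}={\bf A}^{\dag}({\bf A}{\bf W})^{k+1}$.

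For uniqueness I would take an arbitrary ${\bf Z}$ satisfying all three equations. The middle equation gives ${\bf A}{\bf Z}={\bf A}{\bf W}{\bf A}^{d,W}{\bf W}={\bf E}$, and the first then gives ${\bf Z}={\bf Z}{\bf A}{\bf Z}={\bf Z}{\bf E}$. The decisive manoeuvre is to write the idempotent as a power, ${\bf E}=\left[({\bf A}{\bf W})({\bf A}{\bf W})^{d}\right]^{k+1}=({\bf A}{\bf W})^{k+1}\left[({\bf A}{\bf W})^{d}\right]^{k+1}$, which is legitimate because ${\bf A}{\bf W}$ commutes with $({\bf A}{\bf W})^{d}$; substituting and invoking the third equation ${\bf Z}({\bf A}{\bf W})^{k+1}={\bf A}^{\dag}({\bf A}{\bf W})^{k+1}$ yields
\begin{align*}
{\bf Z}={\bf Z}{\bf E}={\bf Z}({\bf A}{\bf W})^{k+1}\left[({\bf A}{\bf W})^{d}\right]^{k+1}={\bf A}^{\dag}({\bf A}{\bf W})^{k+1}\left[({\bf A}{\bf W})^{d}\right]^{k+1}={\bf A}^{\dag}{\bf E}={\bf X}.
\end{align*}

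I expect the only genuine work to be the two collapsing identities of the first step and the reconstitution of ${\bf E}$ as $({\bf A}{\bf W})^{k+1}\left[({\bf A}{\bf W})^{d}\right]^{k+1}$ in the uniqueness argument; the remaining steps are direct consequences of ${\bf A}{\bf A}^{\dag}{\bf A}={\bf A}$ and the standard Drazin relations. Noncommutativity of ${\mathbb{H}}$ poses no difficulty, since every manipulation preserves the left-to-right order of the factors and uses only the commutation of ${\bf A}{\bf W}$ with its own Drazin inverse.
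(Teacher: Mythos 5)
Your proof is correct and follows essentially the same route as the paper's: direct verification of the three equations for the candidate ${\bf X}$, and uniqueness by raising the idempotent projector ${\bf A}{\bf W}{\bf A}^{d,W}{\bf W}=({\bf A}{\bf W})({\bf A}{\bf W})^{d}$ to the power $k+1$ so that the third equation can be invoked. Your packaging via the collapsing identity ${\bf A}^{d,W}{\bf W}=({\bf A}{\bf W})^{d}$ and the projector ${\bf E}$ merely streamlines the same computations, and comparing an arbitrary solution ${\bf Z}$ with the explicit candidate (rather than two solutions ${\bf X}_1,{\bf X}_2$ with each other, as the paper does) is an equivalent formulation of the same uniqueness argument.
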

\begin{proof}
From Definitions \ref{def:wdr}, \ref{def:mp},   and taking into account (\ref{eq:WDr_dr}), it follows
\begin{align*}
{\bf A}^{\dag}{\bf A}{\bf W}{\bf A}^{d,W}{\bf W}({\bf A}{\bf A}^{\dag}{\bf A}){\bf W}{\bf A}^{d,W}{\bf W}=&{\bf A}^{\dag}{\bf A}{\bf W}({\bf A}^{d,W}{\bf W}{\bf A}{\bf W}{\bf A}^{d,W}){\bf W}=\\=&{\bf A}^{\dag}{\bf A}{\bf W}{\bf A}^{d,W}{\bf W},\\
{\bf A}{\bf A}^{\dag}{\bf A}{\bf W}{\bf A}^{d,W}{\bf W}=&{\bf A}{\bf W}{\bf A}^{d,W}{\bf W},\\
 {\bf A}^{\dag}{\bf A}{\bf W}{\bf A}^{d,W}{\bf W}({\bf A}{\bf W})^{k+1}=& {\bf A}^{\dag}{\bf A}{\bf W}\left(({\bf A}{\bf W})^{d}\right)^2{\bf A}{\bf W}({\bf A}{\bf W})^{k+1}=\\=&
 {\bf A}^{\dag}{\bf A}{\bf W}({\bf A}{\bf W})^{d}{\bf A}{\bf W}({\bf A}{\bf W})^d({\bf A}{\bf W})^{k+1}=\\=&{\bf A}^{\dag}({\bf A}{\bf W})^{k+1}.
 \end{align*}
It means that $
{\bf X}={\bf A}^{\dag}{\bf A}{\bf W}{\bf A}^{d,W}{\bf W}$ is the solution to the equations
(\ref{eq:wmpd}).

To prove uniqueness, suppose both ${\bf X}_1$ and ${\bf X}_2$ are two solutions to (\ref{eq:wmpd}). Using repeated applications of the  equations in (\ref{eq:wmpd}) and in Definition \ref{def:wdr}, we obtain
\begin{align*}
{\bf X}_1=&{\bf X}_1{\bf A}{\bf X}_1={\bf X}_1{\bf A}{\bf W}{\bf A}^{d,W}{\bf W}={\bf X}_1\left({\bf A}{\bf W}\right)^2\left({\bf A}^{d,W}{\bf W}\right)^2=\ldots=\\
=&{\bf X}_1\left({\bf A}{\bf W}\right)^{k+1}\left({\bf A}^{d,W}{\bf W}\right)^{k+1}=
{\bf A}^{\dag}\left({\bf A}{\bf W}\right)^{k+1}\left({\bf A}^{d,W}{\bf W}\right)^{k+1}=\\=&{\bf X}_2\left({\bf A}{\bf W}\right)^{k+1}\left({\bf A}^{d,W}{\bf W}\right)^{k+1}={\bf X}_2{\bf A}{\bf W}{\bf A}^{d,W}{\bf W}={\bf X}_2{\bf A}{\bf X}_2={\bf X}_2.
 \end{align*}
It completes the proof.
\end{proof}
\begin{defn}
Let ${\bf A}\in  {\mathbb{H}}^{m\times n}$ and ${\bf W}\in  {\mathbb{H}}^{n\times m}$ be a nonzero matrix. The W-weighted MPD (WMPD) inverse of ${\bf A}$ with respect to  ${\bf W}$ is defined as
\begin{align*}
{\bf A}^{\dag,d,W}={\bf A}^{\dag}{\bf A}{\bf W}{\bf A}^{d,W}{\bf W}.
\end{align*}
\end{defn}
Now, we give determinantal representations of the  WDMP  inverse.
We have two cases due to Hermicity of the matrix ${\bf W}{\bf A}$.

\begin{thm}\label{th:detrep_dmp}Let ${\bf A}\in  {\mathbb{H}}^{m\times n}_r$ and ${\bf W}\in  {\mathbb{H}}^{n\times m}$ be a nonzero matrix. Suppose  $k=\max\{\Ind ({\bf W}{\bf A}),$ $\Ind ({\bf A}{\bf W})\}$ and $\rk ({\bf W}{\bf A})^k=\rk {\bf U}^k=r_1$.
 Then the  determinantal representations of its WDMP inverse $ {\bf A}^{d,{\dag},W}= \left(a_{ij}^{d,{\dag},W}\right)$ can be expressed as follows.

(i) If ${\bf W}{\bf A}$ is an arbitrary matrix, then
 \begin{align}
a_{ij}^{d,{\dag},W}=&\label{eq:det_wdmp} {\frac{{\sum\limits_{\alpha \in I_{r,m} {\left\{ {j}
\right\}}} {{{\rm{rdet}} _{j} {\left( {({\bf A} {\bf A}^{ *}
)_{j .} (\widetilde{{\bf \omega}}  _{i  .} )}
\right)  _{\alpha} ^{\alpha} } }
}}
}{{{\sum\limits_{\beta \in I_{r,m}} {{ {\left| {
{\bf A} {\bf A}^{ *} } \right|_{\alpha
}^{\alpha} }  }}}
{{\left(
{\sum\limits_{\alpha \in I_{r_1,n}} {{\left|  {\bf U}^{ 2k+1} \left({\bf U}^{ 2k+1} \right)^{*}
 \right|_{\alpha} ^{\alpha}}}}\right)^2 }},
}}}\end{align}
where $\widetilde{{\bf \omega}}  _{i  .}$ is the $i$-th row  of $\widetilde{{\bf \Omega}}={\bf \Omega}({\bf W}{\bf A})^{k+1} {\bf A}^{ *}$. The matrix ${\bf \Omega}=(\omega_{is})$ is such that $\omega_{is}$ is determined by
\begin{align}
\omega_{is}={\sum\limits_{\alpha \in I_{r_1,n} {\left\{ {s}
\right\}}} {{\rm{rdet}} _{s} \left( {\left( { {\bf U}^{ 2k+1} \left({\bf U}^{ 2k+1} \right)^{*}
} \right)_{s.} ({\widehat {\bf \phi}}_{i.})} \right)  _{\alpha} ^{\alpha} } }
\label{eq:om1},
\end{align}
where  ${\widehat {\bf \phi}}_{i.}$ is the $i$-th row  of $
\widehat{ {\bf \Phi}}:={\bf W}{\bf A}{\bf \Phi}{\bf U}^{2k}({\bf U}^{ 2k+1})^{*}\in {\mathbb{H}}^{n\times n}$, and $
{\bf \Phi}=(\phi_{lq})\in  {\mathbb{H}}^{m\times n}$ such that
\begin{align}\label{eq:phidmp}
\phi_{iq}={\sum\limits_{\alpha \in I_{r_1,n} {\left\{ {q}
\right\}}} {{\rm{rdet}} _{q} \left( {\left( { {\bf U}^{ 2k+1} \left({\bf U}^{ 2k+1} \right)^{*}
} \right)_{q.} (\check{ {\bf u}}_{l\,.})} \right)  _{\alpha} ^{\alpha} } }.
\end{align}
Here
$\check{ {\bf u}}_{l\,.}$ is the $l$-th row of $
{\bf U}^{k}({\bf U}^{ 2k+1})^{*} =:\check{ {\bf U}}\in {\mathbb{H}}^{n\times n}$.

(ii) If ${\bf W}{\bf A}$ is Hermitian,  then
 \begin{align}
a_{ij}^{d,{\dag},W}=&\label{eq:det_wdmp2} {\frac{{\sum\limits_{\alpha \in I_{r,m} {\left\{ {j}
\right\}}} {{{\rm{rdet}} _{j} {\left( {({\bf A} {\bf A}^{ *}
)_{j .} (\widetilde{{\bf \omega}}  _{i  .} )}
\right)  _{\alpha} ^{\alpha} } }
}}
}{{{\sum\limits_{\alpha \in I_{r,m}} {{ {\left| {
{\bf A} {\bf A}^{ *} } \right|_{\alpha
}^{\alpha} }  }}}
{\sum\limits_{\alpha \in
I_{r_1,n}}  {{\left| {\left({\bf W} {{\rm {\bf A}} } \right)^{k+2}   } \right|_{\alpha} ^{\alpha}}}}
}}},\end{align}
where $\widetilde{{\bf \omega}}  _{i  .}$ is the $i$-th row  of $\widetilde{{\bf \Omega}}={\bf \Omega}{\bf W}{\bf A} {\bf A}^{ *}$. The matrix ${\bf \Omega}=(\omega_{is})$ is such that
\begin{align*}
\omega_{is}:={\sum\limits_{\alpha
\in I_{r_1,n} {\left\{ {s} \right\}}} {{\rm{rdet}} _{s} \left(
{({\bf W}{\rm {\bf A}} )^{ k+2}_{s.} ( {\bf \widehat{u}}_{i.} )}
\right)_{\alpha} ^{\alpha} } },
\end{align*}
where  $ {\bf \widehat{u}}_{i.} $  is the $i$-th row of ${\bf \widehat{U}}=\left({\bf W}{\bf A}\right)^{k+1}$.

\end{thm}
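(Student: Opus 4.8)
The plan is to imitate the proof of Theorem~\ref{th:drrcep}: express the WDMP inverse as a product of matrices whose determinantal representations are already available, substitute them entrywise, and collapse the intervening matrix products into a single replaced row by the left-linearity of the row determinant (Lemma~\ref{lem:left_rdet}). Write ${\bf A}^{d,\dagger,W}={\bf B}{\bf A}^{\dagger}$ with ${\bf B}:={\bf W}{\bf A}^{d,W}{\bf W}{\bf A}\in{\mathbb{H}}^{n\times n}$, so that $a_{ij}^{d,\dagger,W}=\sum_{l}b_{il}a_{lj}^{\dagger}$, and substitute the representation (\ref{eq:rdet_repr_AA*}) of ${\bf A}^{\dagger}$ through ${\bf A}{\bf A}^{*}$. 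Because each coefficient $b_{il}$ multiplies $a_{lj}^{\dagger}$ from the left while (\ref{eq:rdet_repr_AA*}) inserts the $l$-th row ${\bf a}_{l.}^{*}$ of ${\bf A}^{*}$ into the $j$-th row of ${\bf A}{\bf A}^{*}$, Lemma~\ref{lem:left_rdet} lets me pull the summation over $l$ inside the row determinant for each fixed $\alpha$, replacing $\sum_{l}b_{il}{\bf a}_{l.}^{*}$ by the $i$-th row of ${\bf B}{\bf A}^{*}={\bf W}{\bf A}^{d,W}{\bf W}{\bf A}{\bf A}^{*}$. The common denominator $\sum_{\alpha\in I_{r,m}}|{\bf A}{\bf A}^{*}|_{\alpha}^{\alpha}$ is a sum of principal minors of a Hermitian matrix, hence real and central, so it survives unchanged. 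Everything thus reduces to producing a determinantal expression for the $i$-th row of ${\bf B}{\bf A}^{*}$.

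Next I would supply that row from the determinantal representation of ${\bf A}^{d,W}$, splitting into the two cases of the statement. In case (i), where ${\bf W}{\bf A}={\bf U}$ is arbitrary, I use Lemma~\ref{theor:det_rep_wdraz1}(i), whose representation of ${\bf A}^{d,W}$ is built on the Hermitian ${\bf U}^{2k+1}({\bf U}^{2k+1})^{*}$, carries the squared denominator $D^{2}$ with $D=\sum_{\alpha\in I_{r_1,n}}|{\bf U}^{2k+1}({\bf U}^{2k+1})^{*}|_{\alpha}^{\alpha}$, and uses the nested matrices ${\bf \Phi}$ and $\widetilde{{\bf \Phi}}={\bf A}{\bf \Phi}{\bf U}^{2k}({\bf U}^{2k+1})^{*}$. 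Left-multiplication by ${\bf W}$ turns ${\bf A}{\bf \Phi}$ into ${\bf W}{\bf A}{\bf \Phi}$, so that $\widehat{{\bf \Phi}}={\bf W}\widetilde{{\bf \Phi}}$ is exactly the matrix $\widehat{{\bf \Phi}}={\bf W}{\bf A}\,{\bf \Phi}\,{\bf U}^{2k}({\bf U}^{2k+1})^{*}$ of the statement; applying Lemma~\ref{lem:left_rdet} once more yields the matrix ${\bf \Omega}$, which equals ${\bf W}$ times the numerator matrix occurring in the Drazin representation of ${\bf A}^{d,W}$. I record this as the identity ${\bf W}{\bf A}^{d,W}=D^{-2}{\bf \Omega}({\bf W}{\bf A})^{k}$, so that the trailing ${\bf U}^{k}$, followed by the remaining factor ${\bf W}{\bf A}={\bf U}$ of ${\bf B}$ and the ${\bf A}^{*}$ from the previous step, assembles into $({\bf W}{\bf A})^{k+1}{\bf A}^{*}$, whence the $i$-th row of ${\bf B}{\bf A}^{*}$ equals $D^{-2}\widetilde{\omega}_{i.}$ with $\widetilde{{\bf \Omega}}={\bf \Omega}({\bf W}{\bf A})^{k+1}{\bf A}^{*}$. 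Substituting and pulling the central scalar $D^{-2}$ out of the row determinant produces (\ref{eq:det_wdmp}) with the denominator $D^{2}\sum_{\alpha\in I_{r,m}}|{\bf A}{\bf A}^{*}|_{\alpha}^{\alpha}$.

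For case (ii), when ${\bf W}{\bf A}$ is Hermitian, I replace Lemma~\ref{theor:det_rep_wdraz1}(i) by the simpler single-ratio representation Lemma~\ref{theor:det_rep_wdrazh}(ii), based on the Hermitian power $({\bf W}{\bf A})^{k+2}$ and on $\bar{{\bf U}}={\bf A}({\bf W}{\bf A})^{k}$. Here left-multiplication by ${\bf W}$ turns $\bar{{\bf U}}$ into ${\bf W}{\bf A}({\bf W}{\bf A})^{k}=({\bf W}{\bf A})^{k+1}=\widehat{{\bf U}}$, producing ${\bf \Omega}$ with the single denominator $\sum_{\alpha\in I_{r_1,n}}|({\bf W}{\bf A})^{k+2}|_{\alpha}^{\alpha}$ and then $\widetilde{{\bf \Omega}}={\bf \Omega}\,{\bf W}{\bf A}\,{\bf A}^{*}$; the identical collapse gives (\ref{eq:det_wdmp2}). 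Throughout, uniqueness and the value ${\bf A}^{d,\dagger,W}={\bf W}{\bf A}^{d,W}{\bf W}{\bf A}{\bf A}^{\dagger}$ are already guaranteed, so only the displayed formula needs verifying.

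The routine parts are the repeated applications of left-linearity and the observation that every denominator is real, hence central, because principal minors of Hermitian quaternion matrices are real by \cite{kyr3}. The main obstacle I anticipate is organizational rather than conceptual: correctly threading the several left- and right-multiplications by ${\bf W}$, ${\bf A}$, ${\bf U}^{k}$, ${\bf W}{\bf A}$, and ${\bf A}^{*}$ through the two-level nesting of the weighted Drazin representation so that ${\bf \Phi}$, $\widehat{{\bf \Phi}}$, ${\bf \Omega}$, $\widetilde{{\bf \Omega}}$ emerge precisely as in the statement. Concretely, one must verify the matrix identity ${\bf W}{\bf A}^{d,W}{\bf W}{\bf A}{\bf A}^{*}=D^{-2}\widetilde{{\bf \Omega}}$ in case (i) (and its single-denominator analogue in case (ii)), where the placement of ${\bf W}$ and the bookkeeping of the powers of ${\bf W}{\bf A}$ must be checked against (\ref{eq:WDr_dr}); this is the step most prone to indexing errors.
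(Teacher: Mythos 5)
Your proposal is correct and takes essentially the same route as the paper's proof: both expand ${\bf A}^{d,\dag,W}={\bf W}{\bf A}^{d,W}{\bf W}{\bf A}{\bf A}^{\dag}$ entrywise, substitute the determinantal representation of the Moore--Penrose part built on ${\bf A}{\bf A}^{*}$ together with that of ${\bf A}^{d,W}$ (Lemma~\ref{theor:det_rep_wdraz1}(i) in case (i), Lemma~\ref{theor:det_rep_wdrazh}(ii) in case (ii)), and collapse the intervening sums by left-linearity of ${\rm rdet}$ so that ${\bf \Phi}$, $\widehat{{\bf \Phi}}$, ${\bf \Omega}$, $\widetilde{{\bf \Omega}}$ emerge exactly as in the statement. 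The only cosmetic difference is that you contract ${\bf B}={\bf W}{\bf A}^{d,W}{\bf W}{\bf A}$ against ${\bf A}^{\dag}$ via (\ref{eq:rdet_repr_AA*}) whereas the paper contracts ${\bf W}{\bf A}^{d,W}{\bf W}$ against ${\bf P}_A$ via (\ref{eq:det_repr_proj_P}); both insert the $i$-th row of ${\bf W}{\bf A}^{d,W}{\bf W}{\bf A}{\bf A}^{*}$ into ${\bf A}{\bf A}^{*}$, so the computations coincide.
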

\begin{proof}
Taking into account (\ref{eq:def_dmp}), we have
\begin{equation}\label{eq:dmp_start}
a_{ij}^{d,{\dag},W} =  \sum_{t = 1}^m\sum_{s = 1}^n \sum_{f = 1}^m w_{it}{a}_{ts}^{d,W}w_{sf} p^A_{fj},
\end{equation}
where  ${\bf A}^{d,W}=(a^{d,W}_{ts})\in{\mathbb{H}}^{m \times
n}$, and ${\bf P}_A=(p^A_{fj})\in{\mathbb{H}}^{m \times
m}$.

(i)~~  Denote  ${\bf W}_1:={\bf W}{\bf A}{\bf A}^{\ast}=(w^{(1)}_{sj})$. By applying  (\ref{eq:det_repr_proj_P}) for the determinantal representation of    ${\bf P}_A$, respectively, we have

\begin{align}\nonumber
\sum_{f = 1}^m w_{sf} p^A_{fj}=&\sum_{f = 1}^m w_{sf}
{\frac{{{\sum\limits_{\alpha \in I_{r,m} {\left\{ {j}
\right\}}} {{{\rm{rdet}} _{j} {\left( {({\bf A} {\bf A}^{ *}
)_{j .} ({\bf \ddot{a}}  _{f  .} )}
\right)  _{\alpha} ^{\alpha} } }}}
}}{{{\sum\limits_{\alpha \in I_{r,m}} {{ {\left| {
{\bf A} {\bf A}^{ *} } \right| _{\alpha
}^{\alpha} }  }}} }}}\\=&{\frac{{{\sum\limits_{\alpha \in I_{r,m} {\left\{ {j}
\right\}}} {{{\rm{rdet}} _{j} {\left( {({\bf A} {\bf A}^{ *}
)_{j .} ({\bf w}^{(1)}  _{s  .} )}
\right)  _{\alpha} ^{\alpha} } }}}
}}{{{\sum\limits_{\alpha \in I_{r,m}} {{ {\left| {
{\bf A} {\bf A}^{ *} } \right| _{\alpha
}^{\alpha} }  }}} }}}.\label{eq:w1}
\end{align}
Substituting (\ref{eq:w1})  into (\ref{eq:dmp_start}) and applying (\ref{eq:det_rep_u}) for the determinantal representation of  ${\bf A}^{d,W}$ give
\begin{align}
a_{ij}^{d,{\dag},W} =\nonumber&\sum_{s = 1}^n
{\frac{\sum\limits_{\alpha \in I_{r_1,\,n} {\left\{ {s}
\right\}}} {{\rm{rdet}} _{s} \left( {\left( { {\bf U}^{ 2k+1} \left({\bf U}^{ 2k+1} \right)^{*}
} \right)_{s.} (\widehat{ {\bf \phi}}_{i.})} \right)  _{\alpha} ^{\alpha} } }{{\left(
{\sum\limits_{\alpha \in I_{r_1,\,n}} {{\left|  {\bf U}^{ 2k+1} \left({\bf U}^{ 2k+1} \right)^{*}
 \right|_{\alpha} ^{\alpha}}}}\right)^2 }}}\times\\
&{\frac{\sum\limits_{\alpha \in I_{r,m} {\left\{ {j}
\right\}}} {{{\rm{rdet}} _{j} {\left( {({\bf A} {\bf A}^{ *}
)_{j .} ({\bf w}^{(2)}  _{s  .} )}
\right)  _{\alpha} ^{\alpha} } }
}}{{{\sum\limits_{\alpha \in I_{r,m}} {{ {\left| {
{\bf A} {\bf A}^{ *} } \right| _{\alpha
}^{\alpha} }  }}} }}},
\label{eq:det_wdmp_s}
\end{align}
where  $\widehat{ {\bf \phi}}_{i\,.}$ is the $i$-th row of $
\widehat{ {\bf \Phi}}:={\bf W}{\bf A}{\bf \Phi}{\bf U}^{2k}({\bf U}^{ 2k+1})^{*}\in {\mathbb{H}}^{m\times n}$, and ${\bf w}^{(2)}  _{s  .}$ is the $s$-th row of ${\bf W}_2={\bf U}^k{\bf W}{\bf A}{\bf A}^*=\left({\bf W}{\bf A}\right)^{k+1}{\bf A}^*$.
Here
 $
{\bf \Phi}=(\phi_{lq})\in  {\mathbb{H}}^{n\times n}$ such that
$$
\phi_{lq}={\sum\limits_{\alpha \in I_{r_1,n} {\left\{ {q}
\right\}}} {{\rm{rdet}} _{q} \left( {\left( { {\bf U}^{ 2k+1} \left({\bf U}^{ 2k+1} \right)^{*}
} \right)_{q.} (\check{ {\bf u}}_{l\,.})} \right)  _{\alpha} ^{\alpha} } },
$$
where
$\check{ {\bf u}}_{l.}$ is the $l$-th row of $
{\bf U}^{k}({\bf U}^{ 2k+1})^{*} =:\check{ {\bf U}}\in {\mathbb{H}}^{n\times n}$.

Denote
$$
\omega_{is}:={\sum\limits_{\alpha \in I_{r_1,n} {\left\{ {s}
\right\}}} {{\rm{rdet}} _{s} \left( {\left( { {\bf U}^{ 2k+1} \left({\bf U}^{ 2k+1} \right)^{*}
} \right)_{s.} (\widehat{ {\bf \phi}}_{t.})} \right)  _{\alpha} ^{\alpha} } }
$$ and construct the matrix ${\bf\Omega}=(\omega_{is})$.
Since
\begin{align*}
\sum_{s = 1}^n\omega_{is}{\sum\limits_{\alpha \in I_{r,m} {\left\{ {j}
\right\}}} {{{\rm{rdet}} _{j} {\left( {({\bf A} {\bf A}^{ *}
)_{j .} ({\bf w}^{(3)}  _{s  .} )}
\right)  _{\alpha} ^{\alpha} } }
}}={\sum\limits_{\alpha \in I_{r,m} {\left\{ {j}
\right\}}} {{{\rm{rdet}} _{j} {\left( {({\bf A} {\bf A}^{ *}
)_{j .} (\widetilde{{\bf \omega}}  _{i  .} )}
\right)  _{\alpha} ^{\alpha} } }
}},
\end{align*} where $\widetilde{{\bf \omega}}  _{i  .}$ is the $i$-th row of $\widetilde{{\bf \Omega}}={\bf \Omega}{\bf W}_3={\bf \Omega}({\bf W}{\bf A})^{k+1} {\bf A}^{ *}$, then
finally, from (\ref{eq:det_wdmp_s}) it follows (\ref{eq:det_wdmp}).

(ii)~~  Applying  the determinantal representation (\ref{eq:dr_rep_wrdet}) of  ${\bf A}^{d,W}$ in (\ref{eq:dmp_start}) gives
\begin{align}
a_{ij}^{d,{\dag},W} =&
{\frac{\sum\limits_{s = 1}^n\sum\limits_{\alpha
\in I_{r_1,n} {\left\{ {s} \right\}}} {{\rm{rdet}} _{s} \left(
{({\bf W}{\rm {\bf A}} )^{ k+2}_{s.} ( {\bf \widehat{u}}_{i.} )}
\right)_{\alpha} ^{\alpha} }
{\sum\limits_{\alpha \in I_{r,m} {\left\{ {j}
\right\}}} {{{\rm{rdet}} _{j} {\left( {({\bf A} {\bf A}^{ *}
)_{j .} ({\bf w}^{(2)}  _{s  .} )}
\right)  _{\alpha} ^{\alpha} } }
}}
 }{\sum\limits_{\alpha \in
I_{r_1,n}}  {{\left| {\left({\bf W} {{\rm {\bf A}} } \right)^{k+2}   } \right|_{\alpha} ^{\alpha}}}
{\sum\limits_{\alpha \in I_{r,m}} {{ {\left| {
{\bf A} {\bf A}^{ *} } \right| _{\alpha
}^{\alpha} }  }}}} },
\label{eq:det_wdmp_sh}
\end{align}
where  $\widehat{\bf u}_{i\,.}$ is the $i$-th row of $
\widehat{ {\bf U}}:=({\bf W}{\bf A})^{ k+1}\in {\mathbb{H}}^{n\times n}$, and ${\bf w}^{(2)}  _{s  .}$ is the $s$-th row of ${\bf W}_2={\bf U}^k{\bf W}{\bf A}{\bf A}^*=\left({\bf W}{\bf A}\right)^{k+1}{\bf A}^*$.

Denote
$$
\omega_{is}:={\sum\limits_{\alpha
\in I_{r_1,n} {\left\{ {s} \right\}}} {{\rm{rdet}} _{s} \left(
{({\bf W}{\rm {\bf A}} )^{ k+2}_{s.} ( {\bf \widehat{u}}_{i.} )}
\right)_{\alpha} ^{\alpha} } }
$$ and construct the matrix ${\bf\Omega}=(\omega_{is})$.
Since
\begin{align*}
\sum_{s = 1}^n\omega_{is}{\sum\limits_{\alpha \in I_{r,m} {\left\{ {j}
\right\}}} {{{\rm{rdet}} _{j} {\left( {({\bf A} {\bf A}^{ *}
)_{j .} ({\bf w}^{(2)}  _{s  .} )}
\right)  _{\alpha} ^{\alpha} } }
}}={\sum\limits_{\alpha \in I_{r,m} {\left\{ {j}
\right\}}} {{{\rm{rdet}} _{j} {\left( {({\bf A} {\bf A}^{ *}
)_{j .} (\widetilde{{\bf \omega}}  _{i  .} )}
\right)  _{\alpha} ^{\alpha} } }
}},
\end{align*} where $\widetilde{{\bf \omega}}  _{i  .}$ is the $i$-th row of $\widetilde{{\bf \Omega}}={\bf \Omega}{\bf W}_2={\bf \Omega}({\bf W}{\bf A})^{k+1} {\bf A}^{ *}$, then,
finally, from (\ref{eq:det_wdmp_sh}) it follows (\ref{eq:det_wdmp2}).
\end{proof}
The following corollary can be get in the complex case.
\begin{cor}
Let ${\bf A}\in  {\mathbb{C}}^{m\times n}_r$ and ${\bf W}\in  {\mathbb{C}}^{n\times m}$ be a nonzero matrix. Suppose  $k=\max\{\Ind ({\bf W}{\bf A}),$ $\Ind ({\bf A}{\bf W})\}$ and $\rk ({\bf W}{\bf A})^k=\rk {\bf U}^k=r_1$.
 Then the  determinantal representations of its WDMP inverse $ {\bf A}^{d,{\dag},W}= \left(a_{ij}^{d,{\dag},W}\right)$ can be expressed as 
 \begin{align*}
a_{ij}^{d,{\dag},W}=& {\frac{{\sum\limits_{\alpha \in I_{r,m} {\left\{ {j}
\right\}}} {{ {\left| {({\bf A} {\bf A}^{ *}
)_{j .} (\widetilde{{\bf \omega}}  _{i  .} )}
\right|  _{\alpha} ^{\alpha} } }
}}
}{{{\sum\limits_{\alpha \in I_{r,m}} {{ {\left| {
{\bf A} {\bf A}^{ *} } \right|_{\alpha
}^{\alpha} }  }}}
{\sum\limits_{\alpha \in
I_{r_1,n}}  {{\left| {\left({\bf W} {{\rm {\bf A}} } \right)^{k+2}   } \right|_{\alpha} ^{\alpha}}}}
}}},\end{align*}
where $\widetilde{{\bf \omega}}  _{i  .}$ is the $i$-th row  of $\widetilde{{\bf \Omega}}={\bf \Omega}{\bf W}{\bf A} {\bf A}^{ *}$. The matrix ${\bf \Omega}=(\omega_{is})$ is such that
\begin{align*}
\omega_{is}:={\sum\limits_{\alpha
\in I_{r_1,n} {\left\{ {s} \right\}}} {\left|
{({\bf W}{\rm {\bf A}} )^{ k+2}_{s.} ( {\bf \widehat{u}}_{i.} )}
\right|_{\alpha} ^{\alpha} } },
\end{align*}
where  $ {\bf \widehat{u}}_{i.} $  is the $i$-th row of ${\bf \widehat{U}}=\left({\bf W}{\bf A}\right)^{k+1}$.
\end{cor}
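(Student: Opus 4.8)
The plan is to specialise to the complex field the computation that proved Theorem~\ref{th:detrep_dmp}(ii). Over $\mathbb{C}$ every row determinant ${\rm rdet}_{j}$ reduces to the ordinary determinant, so the minors $\left|\cdots\right|_{\alpha}^{\alpha}$ become genuine principal minors. The one structural simplification to stress is that the complex statement needs no separate hypothesis that ${\bf W}{\bf A}$ be Hermitian: Corollary~\ref{theor:det_rep_wdrazc} already furnishes a single determinantal representation of the complex $W$-weighted Drazin inverse valid for arbitrary ${\bf A}$ and ${\bf W}$, and it has exactly the shape used in the Hermitian case over $\mathbb{H}$. Hence only the analogue of case (ii) survives.

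First I would start from the entrywise form of the defining identity ${\bf A}^{d,\dag,W}={\bf W}{\bf A}^{d,W}{\bf W}{\bf A}{\bf A}^{\dag}={\bf W}{\bf A}^{d,W}{\bf W}{\bf P}_A$, that is, from equation~(\ref{eq:dmp_start}), writing $a_{ij}^{d,\dag,W}=\sum_{t}\sum_{s}\sum_{f} w_{it}\,a_{ts}^{d,W}\,w_{sf}\,p^A_{fj}$. Into this I would substitute the complex representation of $a_{ts}^{d,W}$ from Corollary~\ref{theor:det_rep_wdrazc} (its numerator replaces the $s$-th row of $({\bf W}{\bf A})^{k+2}$ by ${\bf \bar{u}}_{t.}$, the $t$-th row of ${\bf A}({\bf W}{\bf A})^{k}$, its denominator being $\sum_{\alpha\in I_{r_1,n}}\left|({\bf W}{\bf A})^{k+2}\right|_{\alpha}^{\alpha}$), together with the complex representation of $p^A_{fj}$ from Corollary~\ref{cor:det_repr_MP_c}(iii) (its numerator replaces the $j$-th row of ${\bf A}{\bf A}^{*}$ by ${\bf \ddot{a}}_{f.}$, the $f$-th row of ${\bf A}{\bf A}^{*}$, its denominator being $\sum_{\alpha\in I_{r,m}}\left|{\bf A}{\bf A}^{*}\right|_{\alpha}^{\alpha}$).

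The heart of the argument is three successive appeals to linearity of the determinant in a single row. Summing over $t$ and pulling $w_{it}$ inside the determinant replaces ${\bf \bar{u}}_{t.}$ by $\sum_{t} w_{it}\,{\bf \bar{u}}_{t.}$, which is the $i$-th row of ${\bf W}{\bf A}({\bf W}{\bf A})^{k}=({\bf W}{\bf A})^{k+1}$, namely ${\bf \widehat{u}}_{i.}$; this produces the scalars $\omega_{is}$ and hence the matrix ${\bf \Omega}=(\omega_{is})$. Summing over $f$ replaces ${\bf \ddot{a}}_{f.}$ by $\sum_{f} w_{sf}\,{\bf \ddot{a}}_{f.}$, the $s$-th row of ${\bf W}{\bf A}{\bf A}^{*}$. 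Finally, summing over $s$ and invoking linearity once more replaces that row by $\sum_{s}\omega_{is}\,({\bf W}{\bf A}{\bf A}^{*})_{s.}$, the $i$-th row of ${\bf \Omega}{\bf W}{\bf A}{\bf A}^{*}=\widetilde{{\bf \Omega}}$, namely $\widetilde{{\bf \omega}}_{i.}$. Assembling the two denominators then gives precisely the asserted formula.

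I do not anticipate a genuine obstacle, since over $\mathbb{C}$ this is a routine specialisation of Theorem~\ref{th:detrep_dmp}. The only point requiring attention is the bookkeeping of the scalar weights relative to the rows: over $\mathbb{H}$ one must carefully distinguish left from right linear combinations, as in Lemmas~\ref{lem:left_rdet} and~\ref{lem:right_cdet}, whereas over $\mathbb{C}$ commutativity renders all three linearity steps identical and erases the distinction between row and column determinants. For completeness I would also verify the index ranges ($i,s\in\{1,\dots,n\}$, $j,f\in\{1,\dots,m\}$), which confirm that ${\bf \Omega}\in\mathbb{C}^{n\times n}$, that $\widetilde{{\bf \Omega}}={\bf \Omega}{\bf W}{\bf A}{\bf A}^{*}\in\mathbb{C}^{n\times m}$, and that ${\bf A}^{d,\dag,W}\in\mathbb{C}^{n\times m}$, consistent with the defining product.
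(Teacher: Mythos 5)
Your proposal is correct and follows essentially the paper's own route: the corollary is presented there as an immediate specialization of Theorem \ref{th:detrep_dmp}(ii), whose proof is exactly the triple application of row-linearity you describe, with Corollary \ref{theor:det_rep_wdrazc} supplying the complex representation of ${\bf A}^{d,W}$ valid without Hermiticity (which is precisely why the Hermitian hypothesis disappears) and Corollary \ref{cor:det_repr_MP_c}(iii) handling ${\bf P}_A$. Your bookkeeping also lands on $\widetilde{{\bf \Omega}}={\bf \Omega}{\bf W}{\bf A}{\bf A}^{*}$, which agrees with the stated corollary and with Theorem \ref{th:detrep_dmp}(ii) as formulated (the paper's proof text of case (ii) momentarily writes $\widetilde{{\bf \Omega}}={\bf \Omega}({\bf W}{\bf A})^{k+1}{\bf A}^{*}$, a slip carried over from case (i) that your derivation correctly avoids).
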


For the weighted  MPD inverse, we have similarly.

\begin{thm}\label{th:detrep_mpd}Let ${\bf A}\in  {\mathbb{H}}^{m\times n}_r$ and ${\bf W}\in  {\mathbb{H}}^{n\times m}$ be a nonzero matrix. Suppose  $k=\max\{\Ind ({\bf W}{\bf A}),$ $\Ind ({\bf A}{\bf W})\}$ and
  $\rk ({\bf A}{\bf W})^k=\rk {\bf V}^k=r_1$.  Then the  determinantal representations of its WMPD inverse  $ {\bf A}^{{\dag},d,W}= \left(a_{ij}^{{\dag},d,W}\right)$ can be expressed as follows.

(i) If the matrix ${\bf A}{\bf W}$ is arbitrary, then
\begin{align}
a_{ij}^{{\dag},d,W}=&\label{eq:det_wmpd}
 {\frac{\sum\limits_{\beta \in J_{r,n} {\left\{ {i}
\right\}}} {{\rm{cdet}} _{i} \left( {\left( {{\bf A}^{ *}
{\bf A}} \right)_{.i} \left(\widetilde{{\bf \upsilon}}_{.j} \right)}
\right)  _{\beta} ^{\beta} } }{{{\sum\limits_{\beta \in  J_{r,n}} {{ {\left| {
{\bf A}^{ *} {\bf A} } \right|_{\beta
}^{\beta} }  }}}
{\sum\limits_{\beta \in J_{r_1,m}} {{\left|  {\left({\bf V}^{2k+1} \right)^{*} {\bf V}^{2k+1}
}   \right|_{\beta} ^{\beta}}}}
}}}
\end{align}
where $\widetilde{{\bf \upsilon}}_{.j}$ is the $j$-th column of $\widetilde{{\bf \Upsilon}}={\bf A}^{ *}({\bf A}{\bf W})^{k+1} {\bf \Upsilon}$. The matrix $\mathbf{\Upsilon}=(\upsilon_{tj})$ is determined by
\begin{align*}
 \upsilon_{tj}:={\sum\limits_{\beta \in J_{r_1,\,m} {\left\{ {t}
\right\}}} {{\rm{cdet}} _{t} \left( {\left(\left({\bf V}^{ 2k+1} \right)^{*}{\bf V}^{ 2k+1} \right)_{. t} \left( \widehat{ {\bf \psi}}_{.j}
\right)} \right)  _{\beta} ^{\beta} } },
\end{align*}
 where $\widehat{ {\bf \psi}}_{.j}$ is the $j$-th column of $\widehat{ {\bf \Psi}}:=({\bf V}^{ 2k+1})^{*}{\bf V}^{2k}{\bf \Psi}{\bf A}{\bf W}\in {\mathbb{H}}^{m\times n}$. Here the matrix
 $
{\bf \Psi}=(\psi_{s\,l})\in  {\mathbb{H}}^{m\times m}$ is such that
$$
\psi_{s\,l}={\sum\limits_{\beta \in J_{r_1,\,m} {\left\{ {s}
\right\}}} {{\rm{cdet}} _{s} \left( {\left(\left({\bf V}^{ 2k+1} \right)^{*}{\bf V}^{ 2k+1} \right)_{. s} \left( \hat{ {\bf v}}_{.l}
\right)} \right)  _{\beta} ^{\beta} } },
$$
where
$\hat{ {\bf v}}_{.l}$ is the $l$-th column of  $
({\bf V}^{ 2k+1})^{*}{\bf V}^{k} =:\hat{ {\bf V}}\in {\mathbb{H}}^{m\times m}$.

(ii) If the matrix ${\bf A}{\bf W}$ is Hermitian, then
\begin{align}
a_{ij}^{{\dag},d,W}=&\label{eq:det_wmpd2}
 {\frac{\sum\limits_{\beta \in J_{r,n} {\left\{ {i}
\right\}}} {{\rm{cdet}} _{i} \left( {\left( {{\bf A}^{ *}
{\bf A}} \right)_{.i} \left(\widetilde{{\bf \upsilon}}_{.j} \right)}
\right)  _{\beta} ^{\beta} } }{{{\sum\limits_{\beta \in J_{r,n}} {{ {\left| {
{\bf A}^{ *} {\bf A} } \right|_{\beta
}^{\beta} }  }}}
\left({\sum\limits_{\beta \in J_{r_1,m}} {{\left|  {\left(  {\bf A}{\bf W} \right)^{k+2}
}   \right|_{\beta} ^{\beta}}}}\right)^2
}}}
\end{align}
where $\widetilde{{\bf \upsilon}}_{.j}$ is the $j$-th column of $\widetilde{{\bf \Upsilon}}={\bf A}^{ *}({\bf A}{\bf W})^{k+1} {\bf \Upsilon}$. The matrix $\mathbf{\Upsilon}=(\upsilon_{sj})$ is determined by
\begin{align*}
\upsilon_{sj}=&{\sum\limits_{\beta
\in J_{r_1,m} {\left\{ {s} \right\}}} {{\rm{cdet}} _{s} \left(
{\left(  {\bf A}{\bf W} \right)^{k+2}_{. \,s} \left(  {
\widetilde{\bf v}}_{.j} \right)} \right) _{\beta}
^{\beta} }  },
\end{align*}
where ${\widetilde{\bf v}}_{.j} $ is the $j$-th column of  ${ \widetilde{\bf V}}=({\bf A}{\bf W})^{k+1} $.
\end{thm}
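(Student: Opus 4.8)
The plan is to follow the proof of Theorem~\ref{th:detrep_dmp} for the WDMP inverse, reflected through the row/column duality: every $\rm rdet$ becomes a $\rm cdet$, the projector ${\bf P}_A={\bf A}{\bf A}^{\dag}$ is replaced by ${\bf Q}_A={\bf A}^{\dag}{\bf A}$, the Gram factor ${\bf A}{\bf A}^{*}$ by ${\bf A}^{*}{\bf A}$, and the left linearity of Lemma~\ref{lem:left_rdet} by the right linearity of Lemma~\ref{lem:right_cdet}. Starting from the product form ${\bf A}^{\dag,d,W}={\bf A}^{\dag}{\bf A}{\bf W}{\bf A}^{d,W}{\bf W}={\bf Q}_A\,{\bf W}\,{\bf A}^{d,W}\,{\bf W}$, I would write the $(i,j)$ entry as the iterated sum
\[a_{ij}^{\dag,d,W}=\sum_{t}\sum_{s}\sum_{f}q^A_{it}\,w_{ts}\,a^{d,W}_{sf}\,w_{fj},\]
where ${\bf Q}_A=(q^A_{it})$, ${\bf W}=(w_{ts})$ and ${\bf A}^{d,W}=(a^{d,W}_{sf})$.

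Into this I would insert the two determinantal representations already available. For the projector I use the $\rm cdet$ form (\ref{eq:det_repr_proj_Q}) of ${\bf Q}_A$ from Corollary~\ref{cor:det_repr_proj_Q}; its replaced column is a column of ${\bf A}^{*}{\bf A}$, so summing over $t$ against $w_{ts}$ on the right and applying Lemma~\ref{lem:right_cdet} folds $\sum_t q^A_{it}w_{ts}$ into a single $\rm cdet$ over $({\bf A}^{*}{\bf A})_{.i}$ whose replaced column is the $s$-th column of ${\bf A}^{*}{\bf A}{\bf W}$; this is what keeps $\sum_\beta|{\bf A}^{*}{\bf A}|_\beta^\beta$ as one denominator factor. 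For the weighted Drazin factor I branch: in case (i), when ${\bf A}{\bf W}={\bf V}$ is arbitrary, I substitute its $\rm cdet$ representation (\ref{eq:det_rep_v}) built on $({\bf V}^{2k+1})^{*}{\bf V}^{2k+1}$ and carrying left scalar prefactors drawn from ${\bf V}^{k}=({\bf A}{\bf W})^{k}$; in case (ii), when ${\bf A}{\bf W}$ is Hermitian, I use instead the representation (\ref{eq:dr_rep_wcdet}) built on $({\bf A}{\bf W})^{k+2}$. These choices furnish the second denominator factor, $\sum_\beta|({\bf V}^{2k+1})^{*}{\bf V}^{2k+1}|_\beta^\beta$ in case (i) and $\big(\sum_\beta|({\bf A}{\bf W})^{k+2}|_\beta^\beta\big)^2$ in case (ii).

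It remains to collapse the surviving summations into the column determinants of the statement. I would introduce the auxiliary matrices precisely as named: ${\bf \Psi}$ (and $\widehat{{\bf \Psi}}=({\bf V}^{2k+1})^{*}{\bf V}^{2k}{\bf \Psi}{\bf A}{\bf W}$) arising from the inner level of the weighted Drazin representation, then ${\bf \Upsilon}=(\upsilon_{tj})$ assembled from the resulting $\rm cdet$'s. A final application of Lemma~\ref{lem:right_cdet} to the outer $\rm cdet$ over $({\bf A}^{*}{\bf A})_{.i}$ --- summing its columns against the entries of ${\bf \Upsilon}$, with the ${\bf W}$ and ${\bf V}^{k}=({\bf A}{\bf W})^{k}$ factors combining as ${\bf A}^{*}{\bf A}{\bf W}({\bf A}{\bf W})^{k}={\bf A}^{*}({\bf A}{\bf W})^{k+1}$ --- merges everything into the $j$-th column of $\widetilde{{\bf \Upsilon}}={\bf A}^{*}({\bf A}{\bf W})^{k+1}{\bf \Upsilon}$, yielding (\ref{eq:det_wmpd}); the Hermitian branch gives (\ref{eq:det_wmpd2}) in the same fashion. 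Uniqueness of ${\bf X}={\bf A}^{\dag,d,W}$ and the identity ${\bf A}^{\dag,d,W}={\bf A}^{\dag}{\bf A}{\bf W}{\bf A}^{d,W}{\bf W}$ are already supplied by the preceding lemma, so only this passage from the product to the closed form has to be carried out.

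The step I expect to be the main obstacle is the noncommutative bookkeeping of the nested sums. Since quaternion scalars do not commute, each collapse of a sum into a single column determinant is valid only when the scalar multiplies the replaced column from the right, so I must keep every intermediate factor --- ${\bf W}$, $({\bf A}{\bf W})^{k}$, ${\bf \Psi}$, ${\bf \Upsilon}$ --- on its correct side and in the correct order throughout. Checking that the composition of the projector representation with the two-level weighted Drazin representation telescopes exactly into the single matrix $\widetilde{{\bf \Upsilon}}$, with the powers of ${\bf V}$ and of ${\bf A}{\bf W}$ matching those in the statement, is the only genuinely delicate point; the rest is a direct transcription of the WDMP argument under the left/right duality.
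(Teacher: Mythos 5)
Your proposal follows essentially the same route as the paper's own proof: the same factorization ${\bf A}^{\dag,d,W}={\bf Q}_A{\bf W}{\bf A}^{d,W}{\bf W}$ written entrywise, the same substitution of the projector representation (\ref{eq:det_repr_proj_Q}) combined with (\ref{eq:det_rep_v}) in case (i) and with (\ref{eq:dr_rep_wcdet}) in case (ii), and the same collapsing of the nested sums into ${\bf \Psi}$, $\widehat{\bf \Psi}$, ${\bf \Upsilon}$, $\widetilde{\bf \Upsilon}={\bf A}^{*}({\bf A}{\bf W})^{k+1}{\bf \Upsilon}$ via the right-linearity Lemma \ref{lem:right_cdet}, with the same absorption ${\bf A}^{*}{\bf A}{\bf W}({\bf A}{\bf W})^{k}={\bf A}^{*}({\bf A}{\bf W})^{k+1}$. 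One caveat worth noting: the denominators you say each branch ``furnishes'' are those of the statement, but substituting (\ref{eq:det_rep_v}) actually produces the squared factor $\bigl(\sum_{\beta}|({\bf V}^{2k+1})^{*}{\bf V}^{2k+1}|_{\beta}^{\beta}\bigr)^{2}$ in case (i) and (\ref{eq:dr_rep_wcdet}) the single factor $\sum_{\beta}|({\bf A}{\bf W})^{k+2}|_{\beta}^{\beta}$ in case (ii), so the exponents come out swapped relative to the printed formulas (\ref{eq:det_wmpd}) and (\ref{eq:det_wmpd2}) --- an inconsistency already present between the paper's own statement and its proof, not a defect of your argument.
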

\begin{proof}Taking into account (\ref{eq:def_mpd}), we have
\begin{equation}\label{eq:mpd_start}
a_{ij}^{{\dag},d,W} =  \sum_{t = 1}^m\sum_{s = 1}^n \sum_{f = 1}^m q^A_{if} w_{ft}{a}_{ts}^{d,W}w_{sf},
\end{equation}
where  ${\bf A}^{d,W}=(a^{d,W}_{ts})\in{\mathbb{H}}^{m \times
n}$ and ${\bf Q}_A=(q^A_{if})\in{\mathbb{H}}^{n \times
n}$.

 (i)~~ Denote  ${\bf W}_1:={\bf A}^*{\bf A}{\bf W}=(w^{(1)}_{sj})$. By applying  (\ref{eq:det_repr_proj_Q}) for the determinantal representation of    ${\bf Q}_A$,  we have
\begin{align}\nonumber
\sum_{l = 1}^n q^A_{il}w_{lt}=&\sum_{l = 1}^n {\frac{{{\sum\limits_{\beta \in J_{r,n} {\left\{ {i}
\right\}}} {{\rm{cdet}} _{i} \left( {\left( {{\bf A}^{ *}
{\bf A}} \right)_{.i} \left({\bf \dot{a}}_{.l} \right)}
\right)  _{\beta} ^{\beta} } }}}{{{\sum\limits_{\beta
\in J_{r,n}}  {{ \left| {{\bf A}^{ *}  {\bf
A}} \right|_{\beta}^{\beta} }}}} }}w_{lt}=\\=& {\frac{{{\sum\limits_{\beta \in J_{r,n} {\left\{ {i}
\right\}}} {{\rm{cdet}} _{i} \left( {\left( {{\bf A}^{ *}
{\bf A}} \right)_{.i} \left({\bf w}^{(1)}_{.t} \right)}
\right)  _{\beta} ^{\beta} } }}}{{{\sum\limits_{\beta
\in J_{r,n}}  {{ \left| {{\bf A}^{ *}  {\bf
A}} \right|_{\beta}^{\beta} }}}} }}\label{eq:w1mdp}
\end{align}
Substituting (\ref{eq:w1mdp})  into (\ref{eq:mpd_start}) and applying (\ref{eq:det_rep_v}) for the determinantal representation of  ${\bf A}^{d,W}$,  we  obtain

\begin{align}\nonumber
a_{ij}^{{\dag},d,W} =&\sum_{t = 1}^m{\frac{\sum\limits_{\beta \in J_{r,n} {\left\{ {i}
\right\}}} {{\rm{cdet}} _{i} \left( {\left( {{\bf A}^{ *}
{\bf A}} \right)_{.i} \left({\bf w}^{(2)}_{.t} \right)}
\right)  _{\beta} ^{\beta} } }{{{\sum\limits_{\beta
\in J_{r,n}}  {{ \left| {{\bf A}^{ *}  {\bf
A}} \right|_{\beta}^{\beta} }}}} }}\times \\&
{\frac{{    {\sum\limits_{\beta \in J_{r_1,\,m} {\left\{ {t}
\right\}}} {{\rm{cdet}} _{t} \left( {\left(\left({\bf V}^{ 2k+1} \right)^{*}{\bf V}^{ 2k+1} \right)_{. t} \left( \widehat{ {\bf \psi}}_{.j}
\right)} \right)  _{\beta} ^{\beta} } }
}}{{\left({\sum\limits_{\beta \in J_{r_1,\,m}} {{\left| \left({\bf V}^{ 2k+1} \right)^{*}{\bf V}^{ 2k+1}
  \right|_{\beta} ^{\beta}}}} \right)^2}}},
\label{eq:det_wmpd_s}
\end{align}
where $\widehat{ {\bf \psi}}_{.j}$ is the $j$-th column of $\widehat{ {\bf \Psi}}:=({\bf V}^{ 2k+1})^{*}{\bf V}^{2k}{\bf \Psi}{\bf A}{\bf W}\in {\mathbb{H}}^{m\times n}$
and ${\bf w}^{(2)}_{.t} $ is the $t$-th column of  ${\bf W}_2={\bf A}^*{\bf A}{\bf W}{\bf V}^k={\bf A}^*\left({\bf A}{\bf W}\right)^{k+1}\in {\mathbb{H}}^{m\times m}$. Here the matrix
 $
{\bf \Psi}=(\psi_{s\,l})\in  {\mathbb{H}}^{m\times m}$ is such that

$$
\psi_{s\,l}={\sum\limits_{\beta \in J_{r_1,\,m} {\left\{ {s}
\right\}}} {{\rm{cdet}} _{s} \left( {\left(\left({\bf V}^{ 2k+1} \right)^{*}{\bf V}^{ 2k+1} \right)_{. s} \left( \hat{ {\bf v}}_{.l}
\right)} \right)  _{\beta} ^{\beta} } },
$$
here
$\hat{ {\bf v}}_{.l}$ is the $l$-th column of  $
({\bf V}^{ 2k+1})^{*}{\bf V}^{k} =:\hat{ {\bf V}}\in {\mathbb{H}}^{m\times m}$.

Denote $$
 \upsilon_{tj}:={\sum\limits_{\beta \in J_{r_1,\,m} {\left\{ {t}
\right\}}} {{\rm{cdet}} _{t} \left( {\left(\left({\bf V}^{ 2k+1} \right)^{*}{\bf V}^{ 2k+1} \right)_{. t} \left( \widehat{ {\bf \psi}}_{.j}
\right)} \right)  _{\beta} ^{\beta} } }
$$ and construct the matrix ${\bf {  \Upsilon}}=( \upsilon_{tj})$.
Taking into account that
\begin{align*}
\sum_{t = 1}^m{\sum\limits_{\beta \in J_{r,n} {\left\{ {i}
\right\}}} {{\rm{cdet}} _{i} \left( {\left( {{\bf A}^{ *}
{\bf A}} \right)_{.i} \left({\bf w}^{(2)}_{.t} \right)}
\right)  _{\beta} ^{\beta} } }\upsilon_{tj}={\sum\limits_{\beta \in J_{r,n} {\left\{ {i}
\right\}}} {{\rm{cdet}} _{i} \left( {\left( {{\bf A}^{ *}
{\bf A}} \right)_{.i} \left(\widetilde{{\bf \upsilon}}_{.j} \right)}
\right)  _{\beta} ^{\beta} } },
\end{align*} where $\widetilde{{\bf \upsilon}}_{.j}$ is the $j$-th column of $\widetilde{{\bf \Upsilon}}={\bf W}_2{\bf \Upsilon}={\bf A}^{ *}({\bf A}{\bf W})^{k+1} {\bf \Upsilon}$,
finally, from (\ref{eq:det_wmpd_s}) it follows (\ref{eq:det_wmpd}).

(ii)~~ Applying the determinantal representation (\ref{eq:dr_rep_wcdet}) of ${\bf A}^{d,W}$ in  (\ref{eq:mpd_start}) gives
\begin{align}\nonumber
a_{ij}^{{\dag},d,W} =&\sum_{t = 1}^m{\frac{\sum\limits_{\beta \in J_{r,n} {\left\{ {i}
\right\}}} {{\rm{cdet}} _{i} \left( {\left( {{\bf A}^{ *}
{\bf A}} \right)_{.i} \left({\bf w}^{(2)}_{.t} \right)}
\right)  _{\beta} ^{\beta} } }{{{\sum\limits_{\beta
\in J_{r,n}}  {{ \left| {{\bf A}^{ *}  {\bf
A}} \right|_{\beta}^{\beta} }}}} }}\times \\&
{\frac{\sum\limits_{\beta
\in J_{r_1,m} {\left\{ {t} \right\}}} {{\rm{cdet}} _{t} \left(
{\left(  {\bf A}{\bf W} \right)^{k+2}_{. \,t} \left(  {
\widetilde{\bf v}}_{.j} \right)} \right) _{\beta}
^{\beta} }  }{\sum\limits_{\beta \in J_{r_1,m}} {{\left|
{\left( { {\bf A}{\bf W}} \right)^{k+2}}  \right|_{\beta} ^{\beta}}} }},
\label{eq:det_wmpdh_s}
\end{align}
where ${\bf w}^{(2)}_{.t} $ is the $t$-th column of  ${\bf W}_2={\bf A}^*{\bf A}{\bf W}{\bf V}^k={\bf A}^*\left({\bf A}{\bf W}\right)^{k+1}\in {\mathbb{H}}^{m\times m}$ and   ${\widetilde{\bf v}}_{.j} $ is the $j$-th column of  ${ \widetilde{\bf V}}=({\bf A}{\bf W})^{k+1} $.
Now, construct the matrix ${\bf {  \Upsilon}}=( \upsilon_{tj})$, where
$$ \upsilon_{tj}:={\sum\limits_{\beta
\in J_{r_1,m} {\left\{ {t} \right\}}} {{\rm{cdet}} _{t} \left(
{\left(  {\bf A}{\bf W} \right)^{k+2}_{. \,t} \left(  {
\widetilde{\bf v}}_{.j} \right)} \right) _{\beta}
^{\beta} }  }.$$ Then, from (\ref{eq:det_wmpdh_s}) it follows (\ref{eq:det_wmpd2}).

\end{proof}

\begin{cor}Let ${\bf A}\in  {\mathbb{C}}^{m\times n}_r$ and ${\bf W}\in  {\mathbb{C}}^{n\times m}$ be a nonzero matrix. Suppose  $k=\max\{\Ind ({\bf W}{\bf A}),$ $\Ind ({\bf A}{\bf W})\}$ and
  $\rk ({\bf A}{\bf W})^k=\rk {\bf V}^k=r_1$.  Then the  determinantal representations of its WMPD inverse  $ {\bf A}^{{\dag},d,W}= \left(a_{ij}^{{\dag},d,W}\right)$ can be expressed as 
\begin{align*}
a_{ij}^{{\dag},d,W}=&
 {\frac{\sum\limits_{\beta \in J_{r,n} {\left\{ {i}
\right\}}} { \left| {\left( {{\bf A}^{ *}
{\bf A}} \right)_{.i} \left(\widetilde{{\bf \upsilon}}_{.j} \right)}
\right|  _{\beta} ^{\beta} } }{{{\sum\limits_{\beta \in J_{r,n}} {{ {\left| {
{\bf A}^{ *} {\bf A} } \right|_{\beta
}^{\beta} }  }}}
\left({\sum\limits_{\beta \in J_{r_1,m}} {{\left|  {\left(  {\bf A}{\bf W} \right)^{k+2}
}   \right|_{\beta} ^{\beta}}}}\right)^2
}}}
\end{align*}
where $\widetilde{{\bf \upsilon}}_{.j}$ is the $j$-th column of $\widetilde{{\bf \Upsilon}}={\bf A}^{ *}({\bf A}{\bf W})^{k+1} {\bf \Upsilon}$. The matrix $\mathbf{\Upsilon}=(\upsilon_{sj})$ is determined by
\begin{align*}
\upsilon_{sj}=&{\sum\limits_{\beta
\in J_{r_1,m} {\left\{ {s} \right\}}} { \left|
{\left(  {\bf A}{\bf W} \right)^{k+2}_{. \,s} \left(  {
\widetilde{\bf v}}_{.j} \right)} \right| _{\beta}
^{\beta} }  },
\end{align*}
where ${\widetilde{\bf v}}_{.j} $ is the $j$-th column of  ${ \widetilde{\bf V}}=({\bf A}{\bf W})^{k+1} $.
\end{cor}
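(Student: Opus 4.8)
The plan is to obtain the complex formula as a direct specialization of Theorem \ref{th:detrep_mpd}. Two features of the complex field drive the simplification. First, since complex scalars commute, every column determinant reduces to the ordinary determinant: for a complex matrix ${\bf M}$ and an index set $\beta$ one has ${\rm cdet}_i\,({\bf M})_{\beta}^{\beta}=|{\bf M}|_{\beta}^{\beta}$, so each ${\rm cdet}_i$ occurring in (\ref{eq:det_wmpd2}) may be read as the corresponding minor $|\cdot|_{\beta}^{\beta}$. Second, over $\mathbb{C}$ the W-weighted Drazin inverse admits the $({\bf A}{\bf W})^{k+2}$-form of Corollary \ref{theor:det_rep_wdrazc}, which is valid for an \emph{arbitrary} complex matrix ${\bf A}{\bf W}$; hence the Hermiticity hypothesis separating cases (i) and (ii) of Theorem \ref{th:detrep_mpd} is no longer needed, and a single determinantal representation suffices in the complex setting.

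I would start from the factorization ${\bf A}^{\dag,d,W}={\bf A}^{\dag}{\bf A}{\bf W}{\bf A}^{d,W}{\bf W}$, group ${\bf A}^{\dag}{\bf A}={\bf Q}_A$, and write the $(i,j)$ entry as a triple sum exactly as in (\ref{eq:mpd_start}). Into this I would substitute the complex representation of the projector ${\bf Q}_A$ from Corollary \ref{cor:det_repr_MP_c}(ii) and the complex weighted Drazin representation from Corollary \ref{theor:det_rep_wdrazc}. As in the proof of Theorem \ref{th:detrep_mpd}(ii), the product ${\bf A}^{*}{\bf A}{\bf W}{\bf V}^{k}={\bf A}^{*}({\bf A}{\bf W})^{k+1}$ supplies the column ${\bf w}^{(2)}_{.t}$ that replaces the $i$th column of ${\bf A}^{*}{\bf A}$, while the Drazin factor contributes the inner minor sum $\upsilon_{tj}$ formed from $({\bf A}{\bf W})^{k+2}$ and the column $\widetilde{\bf v}_{.j}$ of $({\bf A}{\bf W})^{k+1}$.

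The decisive step is collapsing the intermediate summation over $t$ into a single column. Appealing to column-linearity of the determinant, the commutative instance of Lemma \ref{lem:right_cdet}, I would verify
\begin{align*}
\sum_{t=1}^{m}\left|\left({\bf A}^{*}{\bf A}\right)_{.i}\left({\bf w}^{(2)}_{.t}\right)\right|_{\beta}^{\beta}\upsilon_{tj}
=\left|\left({\bf A}^{*}{\bf A}\right)_{.i}\left(\widetilde{\upsilon}_{.j}\right)\right|_{\beta}^{\beta},
\end{align*}
where $\widetilde{\upsilon}_{.j}$ is the $j$th column of $\widetilde{\bf\Upsilon}={\bf A}^{*}({\bf A}{\bf W})^{k+1}{\bf\Upsilon}$ with ${\bf\Upsilon}=(\upsilon_{tj})$, since $\sum_{t}{\bf w}^{(2)}_{.t}\upsilon_{tj}$ is precisely the $j$th column of ${\bf A}^{*}({\bf A}{\bf W})^{k+1}{\bf\Upsilon}$. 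Summing this identity over the admissible $\beta\in J_{r,n}\{i\}$ yields the numerator, and because the two inserted representations carry exactly the denominators appearing in (\ref{eq:det_wmpd2}), the stated complex formula follows; in effect the argument is the proof of Theorem \ref{th:detrep_mpd}(ii) read over $\mathbb{C}$, with every ${\rm cdet}_i$ interpreted as an ordinary minor.

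The main obstacle is bookkeeping rather than any genuine difficulty: one must track the order of the factors ${\bf A}^{*}$, $({\bf A}{\bf W})^{k+1}$ and ${\bf\Upsilon}$ so that the collapsed column is exactly $\widetilde{\upsilon}_{.j}$, and confirm that Corollary \ref{theor:det_rep_wdrazc} legitimately replaces the Hermitian representation used in Theorem \ref{th:detrep_mpd}(ii). Granting the latter, the remaining manipulations are the same routine substitution and column-collapse as in the quaternion proof, and no new argument is required.
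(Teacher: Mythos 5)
Your high-level route is the paper's own: the corollary is meant to be read off from Theorem \ref{th:detrep_mpd}(ii) by interpreting every ${\rm{cdet}}_{i}$ as an ordinary minor and by invoking Corollary \ref{theor:det_rep_wdrazc}, which over $\mathbb{C}$ gives the $\left({\bf A}{\bf W}\right)^{k+2}$-representation of ${\bf A}^{d,W}$ with no Hermiticity hypothesis. Both of those observations are correct, and so is your collapse identity as an abstract statement of column linearity.

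The genuine gap is in the step you dismiss as bookkeeping. Substituting Corollary \ref{cor:det_repr_MP_c}(ii) for ${\bf Q}_A$ and Corollary \ref{theor:det_rep_wdrazc} for ${\bf A}^{d,W}$ into ${\bf A}^{\dag,d,W}={\bf Q}_A{\bf W}{\bf A}^{d,W}{\bf W}$, linearity makes the projector factor absorb ${\bf W}$ into the $t$-th column of ${\bf W}_1={\bf A}^{*}{\bf A}{\bf W}$, \emph{not} of ${\bf W}_2={\bf A}^{*}\left({\bf A}{\bf W}\right)^{k+1}$; the extra factor $\left({\bf A}{\bf W}\right)^{k}$ arises only in case (i) of Theorem \ref{th:detrep_mpd}, where representation (\ref{eq:det_rep_v}) of ${\bf A}^{d,W}$ carries a prefactor ${\bf V}^{k}$ that must be absorbed, whereas Corollary \ref{theor:det_rep_wdrazc} carries none. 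Moreover each inserted representation contributes exactly one denominator sum, so the derivation you describe actually yields
\begin{align*}
a_{ij}^{\dag,d,W}=\frac{\sum\limits_{\beta\in J_{r,n}\left\{i\right\}}\left|\left({\bf A}^{*}{\bf A}\right)_{.i}\left(\widetilde{\upsilon}_{.j}\right)\right|_{\beta}^{\beta}}{\sum\limits_{\beta\in J_{r,n}}\left|{\bf A}^{*}{\bf A}\right|_{\beta}^{\beta}\;\sum\limits_{\beta\in J_{r_{1},m}}\left|\left({\bf A}{\bf W}\right)^{k+2}\right|_{\beta}^{\beta}}
\quad\text{with}\quad
\widetilde{{\bf\Upsilon}}={\bf A}^{*}{\bf A}{\bf W}\,{\bf\Upsilon},
\end{align*}
i.e.\ a first power of the second sum and ${\bf A}^{*}{\bf A}{\bf W}$ in place of ${\bf A}^{*}\left({\bf A}{\bf W}\right)^{k+1}$. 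This is not the stated formula, and the difference is not a renormalization: writing $D_{2}=\sum_{\beta}\left|\left({\bf A}{\bf W}\right)^{k+2}\right|_{\beta}^{\beta}$, Corollary \ref{theor:det_rep_wdrazc} gives ${\bf\Upsilon}=D_{2}\left({\bf A}{\bf W}\right)^{d}$, so by the Drazin identity $\left({\bf A}{\bf W}\right)^{k+1}\left({\bf A}{\bf W}\right)^{d}=\left({\bf A}{\bf W}\right)^{k}$ the statement's right-hand side evaluates to ${\bf Q}_A{\bf W}\left({\bf A}{\bf W}\right)^{k-1}/D_{2}$, while ${\bf A}^{\dag,d,W}={\bf Q}_A{\bf W}\left({\bf A}{\bf W}\right)^{d}$; already ${\bf A}={\rm diag}(2,0)$, ${\bf W}={\bf I}_{2}$ gives $1/8$ versus $1/2$ in the $(1,1)$ entry. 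Thus your sentence that ``the two inserted representations carry exactly the denominators appearing in (\ref{eq:det_wmpd2})'' is precisely where the proof fails: the substitution-and-collapse argument proves the displayed expression above, not the stated one, and no reordering of the factors ${\bf A}^{*}$, $\left({\bf A}{\bf W}\right)^{k+1}$, ${\bf\Upsilon}$ can identify the two. (The same tension already exists between the paper's own derivation (\ref{eq:det_wmpdh_s}) and its statement (\ref{eq:det_wmpd2}); a blind proof has to resolve it rather than reproduce it.)
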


Theorems  \ref{th:detrep_dmp} and \ref{th:detrep_mpd} give the  determinantal representations of the weighted DMP and DMP inverses over the quaternion skew field.
For better understanding, we present  the algorithm of finding one of them, for example WDMP from Theorem \ref{th:detrep_dmp} in the case (i).

\begin{alg}\label{al1}
\begin{enumerate}

    \item Compute the matrix $\check{ {\bf U}}=
{\bf U}^{k}({\bf U}^{ 2k+1})^{*}.$
    \item Find     $\phi_{iq}$ by (\ref{eq:phidmp}) for all $i,q=1,\ldots,n$ and construct the matrix $
{\bf \Phi}=(\phi_{iq})$.
        \item Compute the matrix $
\widehat{ {\bf \Phi}}:={\bf W}{\bf A}{\bf \Phi}{\bf U}^{2k}({\bf U}^{ 2k+1})^{*}$.

 \item By   (\ref{eq:om1}), find  $\omega_{is}$   for all $i,s=1,\ldots,n$ and construct the matrix ${\bf \Omega}=(\omega_{is})$.
\item Compute the matrix $
\widetilde{ {\bf \Omega}}:={\bf \Omega}\left({\bf W}{\bf \Omega}\right)^{k+1}{\bf A}^{*}$.
         \item Finally, find $a_{ij}^{d,{\dag},W}$  by  (\ref{eq:det_wdmp}) for all $i=1,\ldots,m$ and $j=1,\ldots,n$ .

  \end{enumerate}
\end{alg}

\section{Determinantal representations of the weighted CMP inverse}
In \cite{meh} by M. Mehdipour and A. Salemi the  CMP inverse was investigated that can be extended to quaternion matrices as follows.
\begin{defn}\cite{ky_cor} Suppose ${\bf A}\in  {\mathbb{H}}^{n\times n}$ has the core-nilpotent decomposition   ${\bf A}= {\bf A}_1 +{\bf A}_2$, where $\Ind{\bf A}_1=\Ind{\bf A}$, ${\bf A}_2$ is nilpotent and
${\bf A}_1{\bf A}_2={\bf A}_2{\bf A}_1=0$.
The CMP inverse of ${\bf A}$ is called the matrix  ${\bf A}^{c,\dag}:={\bf A}^{\dag}{\bf A}_1{\bf A}^{\dag}$.
\end{defn}
Similarly to complex matrices  can be proved the next lemma.
\begin{lem}
Let ${\bf A}\in  {\mathbb{H}}^{n\times n}$. The matrix ${\bf X}={\bf A}^{c,\dag}$ is the unique matrix that
satisfies the following system of equations:
\begin{align*}
{\bf X}{\bf A}{\bf X}={\bf X},~{\bf A}{\bf X}{\bf A}={\bf A}_1,~{\bf A}{\bf X}={\bf A}_1{\bf A}^{\dag},~and~ {\bf X}{\bf A}={\bf A}^{\dag}{\bf A}_1. \end{align*}
Moreover,  $
{\bf A}^{c,\dag}={\bf A}^{\dag}{\bf A}{\bf A}^d{\bf A}{\bf A}^{\dag}.$
\end{lem}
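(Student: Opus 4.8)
The plan is to prove the three assertions in turn: first the closed form ${\bf A}^{c,\dag}={\bf A}^{\dag}{\bf A}{\bf A}^d{\bf A}{\bf A}^{\dag}$, then that ${\bf X}={\bf A}^{c,\dag}$ solves the four equations, and finally uniqueness. The whole argument rests on one structural identity coming from the core-nilpotent decomposition, namely ${\bf A}_1={\bf A}{\bf A}^d{\bf A}$ (equivalently ${\bf A}_1={\bf A}^2{\bf A}^d={\bf A}^d{\bf A}^2$). I would first record this: writing ${\bf A}={\bf A}_1+{\bf A}_2$ with ${\bf A}_1{\bf A}_2={\bf A}_2{\bf A}_1=0$ and ${\bf A}_2$ nilpotent, and using that ${\bf A}^d$ commutes with ${\bf A}$, annihilates ${\bf A}_2$, and restricts to the group inverse of ${\bf A}_1$, one gets ${\bf A}^2{\bf A}^d={\bf A}_1$ by expanding $({\bf A}_1+{\bf A}_2)^2{\bf A}^d$ and discarding the terms killed by ${\bf A}_2{\bf A}^d=0$. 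These manipulations are purely algebraic and carry over verbatim from the complex case to $\mathbb{H}$, since the quaternion Drazin inverse obeys the same defining relations. Granting ${\bf A}_1={\bf A}{\bf A}^d{\bf A}$, the closed form is immediate, as ${\bf A}^{c,\dag}={\bf A}^{\dag}{\bf A}_1{\bf A}^{\dag}={\bf A}^{\dag}({\bf A}{\bf A}^d{\bf A}){\bf A}^{\dag}$.

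The verification of the four equations for ${\bf X}={\bf A}^{\dag}{\bf A}_1{\bf A}^{\dag}$ then reduces to three auxiliary identities, all obtained from ${\bf A}_1={\bf A}{\bf A}^d{\bf A}$ together with the Penrose relation ${\bf A}{\bf A}^{\dag}{\bf A}={\bf A}$: namely ${\bf A}{\bf A}^{\dag}{\bf A}_1={\bf A}_1$, ${\bf A}_1{\bf A}^{\dag}{\bf A}={\bf A}_1$, and ${\bf A}_1{\bf A}^{\dag}{\bf A}_1={\bf A}_1$ (this last also using ${\bf A}^d{\bf A}{\bf A}^d={\bf A}^d$). With these in hand each equation falls out in one line: the projector equations from ${\bf A}{\bf X}=({\bf A}{\bf A}^{\dag}{\bf A}_1){\bf A}^{\dag}={\bf A}_1{\bf A}^{\dag}$ and ${\bf X}{\bf A}={\bf A}^{\dag}({\bf A}_1{\bf A}^{\dag}{\bf A})={\bf A}^{\dag}{\bf A}_1$; the weak-inverse equation from ${\bf A}{\bf X}{\bf A}=({\bf A}{\bf A}^{\dag}{\bf A}_1)({\bf A}^{\dag}{\bf A})={\bf A}_1$; and the idempotency-type equation from ${\bf X}{\bf A}{\bf X}={\bf A}^{\dag}{\bf A}_1({\bf A}^{\dag}{\bf A}{\bf A}^{\dag}){\bf A}_1{\bf A}^{\dag}={\bf A}^{\dag}({\bf A}_1{\bf A}^{\dag}{\bf A}_1){\bf A}^{\dag}={\bf A}^{\dag}{\bf A}_1{\bf A}^{\dag}={\bf X}$.

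For uniqueness I would take any ${\bf X}$ satisfying the four equations and force it to equal ${\bf A}^{\dag}{\bf A}_1{\bf A}^{\dag}$. The key observation is ${\bf A}_1{\bf X}=({\bf A}{\bf X}{\bf A}){\bf X}={\bf A}({\bf X}{\bf A}{\bf X})={\bf A}{\bf X}$, using ${\bf A}_1={\bf A}{\bf X}{\bf A}$ and ${\bf X}{\bf A}{\bf X}={\bf X}$; combined with ${\bf A}{\bf X}={\bf A}_1{\bf A}^{\dag}$ this gives ${\bf A}_1{\bf X}={\bf A}_1{\bf A}^{\dag}$. Then, starting from ${\bf X}={\bf X}{\bf A}{\bf X}$ and substituting ${\bf X}{\bf A}={\bf A}^{\dag}{\bf A}_1$, I obtain ${\bf X}={\bf A}^{\dag}{\bf A}_1{\bf X}={\bf A}^{\dag}({\bf A}_1{\bf X})={\bf A}^{\dag}{\bf A}_1{\bf A}^{\dag}$, so every solution coincides with the closed form, which proves existence and uniqueness simultaneously.

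The main obstacle I anticipate is not any single equation but confirming that the core-nilpotent identity ${\bf A}_1={\bf A}{\bf A}^d{\bf A}$ — and the auxiliary facts ${\bf A}^d{\bf A}_2=0$ and ${\bf A}^d{\bf A}={\bf A}{\bf A}^d$ — remain valid over the noncommutative field $\mathbb{H}$. Once that structural input is secured by appeal to the quaternion Drazin inverse properties recorded earlier, the remaining steps are short, noncommutativity-safe rearrangements that never require commuting scalars. I would therefore concentrate the effort on that preliminary identity and treat the four verifications and the uniqueness argument as routine consequences.
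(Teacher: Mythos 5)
Your proposal is correct, but it cannot be compared step-for-step with the paper because the paper supplies no proof of this lemma at all: it merely remarks that the statement ``can be proved similarly to complex matrices,'' deferring to the complex-case treatment of the CMP inverse (Mehdipour--Salemi). Your argument therefore fills in exactly what the paper leaves implicit, and it does so in a way that is sound over $\mathbb{H}$: the structural identity ${\bf A}_1={\bf A}{\bf A}^d{\bf A}$ follows from the defining relations of the Drazin inverse together with ${\bf A}^d{\bf A}_2={\bf A}_2{\bf A}^d=0$ and ${\bf A}^d={\bf A}_1^{\#}$, all of which are ring-theoretic facts requiring only associativity, never commutation of scalars; the three auxiliary identities ${\bf A}{\bf A}^{\dag}{\bf A}_1={\bf A}_1{\bf A}^{\dag}{\bf A}={\bf A}_1{\bf A}^{\dag}{\bf A}_1={\bf A}_1$ then make the four verifications one-line computations; and your uniqueness argument is particularly clean, since the chain ${\bf X}={\bf X}{\bf A}{\bf X}={\bf A}^{\dag}{\bf A}_1{\bf X}$ combined with ${\bf A}_1{\bf X}=({\bf A}{\bf X}{\bf A}){\bf X}={\bf A}({\bf X}{\bf A}{\bf X})={\bf A}{\bf X}={\bf A}_1{\bf A}^{\dag}$ pins every solution to the closed form using nothing beyond the four equations themselves. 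One small caveat: your argument tacitly uses that ${\bf A}_1$ is group invertible (index at most $1$), which is the standard normalization of the core-nilpotent decomposition; the paper's Definition instead writes $\Ind{\bf A}_1=\Ind{\bf A}$, apparently a typo, so you should state explicitly that you are working with the standard decomposition ${\bf A}_1={\bf A}^2{\bf A}^d$, ${\bf A}_2={\bf A}-{\bf A}^2{\bf A}^d$. With that understood, your proof gives the paper a rigorous quaternionic justification where the paper itself offers only an assertion by analogy.
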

Determinantal representations of the CMP inverse over the quaternion skew field within the framework of the theory of row-column determinants are derived in \cite{ky_cor}.

Recently, Mosi\`{c} \cite{mos2} introduced the weighted CMP
inverse of a rectangular matrix that can be  extended over the quaternion skew field
without any changes.
\begin{lem}
Let ${\bf A}\in  {\mathbb{H}}^{m\times n}$ and ${\bf W}\in  {\mathbb{H}}^{n\times m}$ be a nonzero matrix. The
system of equations
\begin{align*}
{\bf X}{\bf A}{\bf X}={\bf X},~{\bf A}{\bf X}={\bf A}{\bf W}{\bf A}^{d,W}{\bf W}{\bf A}{\bf A}^{\dag},~and~ {\bf X}{\bf A}={\bf A}^{\dag}{\bf A}{\bf W}{\bf A}^{d,W}{\bf W}{\bf A}. \end{align*}
is consistent and its unique solution is  $
{\bf X}={\bf A}^{\dag}{\bf A}{\bf W}{\bf A}^{d,W}{\bf W}{\bf A}{\bf A}^{\dag}.$
\end{lem}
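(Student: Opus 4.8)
The plan is to follow the same two-stage template as the preceding weighted MPD and WDMP lemmas: first confirm that the proposed formula ${\bf X}={\bf A}^{\dag}{\bf A}{\bf W}{\bf A}^{d,W}{\bf W}{\bf A}{\bf A}^{\dag}$ solves all three equations (which simultaneously establishes consistency of the system), and then establish uniqueness. Throughout I would invoke only associativity together with the two defining identities ${\bf A}{\bf A}^{\dag}{\bf A}={\bf A}$ from Definition \ref{def:mp} and ${\bf A}^{d,W}{\bf W}{\bf A}{\bf W}{\bf A}^{d,W}={\bf A}^{d,W}$ from Definition \ref{def:wdr}, being careful never to commute arbitrary quaternion matrix factors.

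For existence, the two ``outer'' equations are immediate. Substituting ${\bf X}$ into ${\bf A}{\bf X}$ and collapsing the central block ${\bf A}{\bf A}^{\dag}{\bf A}={\bf A}$ yields ${\bf A}{\bf X}={\bf A}{\bf W}{\bf A}^{d,W}{\bf W}{\bf A}{\bf A}^{\dag}$, and symmetrically collapsing ${\bf A}{\bf A}^{\dag}{\bf A}$ on the right of ${\bf X}{\bf A}$ gives ${\bf X}{\bf A}={\bf A}^{\dag}{\bf A}{\bf W}{\bf A}^{d,W}{\bf W}{\bf A}$. For the absorption equation ${\bf X}{\bf A}{\bf X}={\bf X}$ I would first form ${\bf X}{\bf A}={\bf A}^{\dag}{\bf A}{\bf W}{\bf A}^{d,W}{\bf W}{\bf A}$, multiply on the right by ${\bf X}$, and again use ${\bf A}{\bf A}^{\dag}{\bf A}={\bf A}$ to expose the pattern ${\bf A}^{d,W}{\bf W}{\bf A}{\bf W}{\bf A}^{d,W}$, which the weighted Drazin identity of Definition \ref{def:wdr} collapses to ${\bf A}^{d,W}$; what remains is exactly ${\bf X}$.

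For uniqueness, the key structural observation is that, in contrast to the weighted MPD case, both the second and third equations here pin down the products ${\bf A}{\bf X}$ and ${\bf X}{\bf A}$ to fixed expressions that are independent of the particular solution. Hence if ${\bf X}_1$ and ${\bf X}_2$ both satisfy the system, then ${\bf A}{\bf X}_1={\bf A}{\bf X}_2$ and ${\bf X}_1{\bf A}={\bf X}_2{\bf A}$. Telescoping through the first equation then gives
\begin{align*}
{\bf X}_1={\bf X}_1{\bf A}{\bf X}_1={\bf X}_1{\bf A}{\bf X}_2=({\bf X}_1{\bf A}){\bf X}_2=({\bf X}_2{\bf A}){\bf X}_2={\bf X}_2{\bf A}{\bf X}_2={\bf X}_2,
\end{align*}
which closes the argument.

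I do not expect a genuine obstacle, since the content is entirely algebraic. The only point demanding care is the noncommutativity of ${\mathbb{H}}$: every cancellation must be realized as an explicit application of ${\bf A}{\bf A}^{\dag}{\bf A}={\bf A}$ or of the weighted Drazin equation in the precise left-right position in which the factors occur, never by reordering. One should also record at the outset that the properties of ${\bf A}^{d,W}$ stated in Definition \ref{def:wdr} and equation (\ref{eq:WDr_dr}) carry over to the quaternion setting, which the excerpt has already asserted, so that these identities may be used freely.
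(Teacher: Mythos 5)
Your proposal is correct, and it is worth noting that the paper itself never proves this lemma: it is stated as an import of Mosi\'{c}'s complex result \cite{mos2} with the remark that it extends over ${\mathbb{H}}$ ``without any changes,'' so your argument supplies a proof the paper omits. Your existence step is the expected one and mirrors the verification the paper does carry out for the neighboring weighted MPD lemma (the system (\ref{eq:wmpd})): the two outer equations follow from ${\bf A}{\bf A}^{\dag}{\bf A}={\bf A}$, and ${\bf X}{\bf A}{\bf X}={\bf X}$ follows from one more application of ${\bf A}{\bf A}^{\dag}{\bf A}={\bf A}$ together with the weighted Drazin identity ${\bf A}^{d,W}{\bf W}{\bf A}{\bf W}{\bf A}^{d,W}={\bf A}^{d,W}$ from Definition \ref{def:wdr}; all of this is dimension-consistent for rectangular ${\bf A}$ and uses only associativity, so noncommutativity causes no difficulty. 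Where you genuinely diverge --- to your advantage --- is uniqueness. In the paper's MPD proof, the system fixes only ${\bf A}{\bf X}$ and ${\bf X}({\bf A}{\bf W})^{k+1}$, which forces the telescoping ${\bf X}_1={\bf X}_1\left({\bf A}{\bf W}\right)^{k+1}\left({\bf A}^{d,W}{\bf W}\right)^{k+1}=\ldots={\bf X}_2$ through the index $k$. Here, as you observe, the second and third equations pin down both one-sided products ${\bf A}{\bf X}$ and ${\bf X}{\bf A}$ as fixed matrices, so uniqueness collapses to the chain ${\bf X}_1={\bf X}_1{\bf A}{\bf X}_1={\bf X}_1{\bf A}{\bf X}_2={\bf X}_2{\bf A}{\bf X}_2={\bf X}_2$, with no reference to $k$ at all. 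This is the standard argument for a $\{1,2\}$-type inverse with both prescribed idempotents, it is shorter than the template you would have inherited from the MPD case, and it is fully valid over the quaternions.
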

\begin{defn}\label{def:wcmp}
Let ${\bf A}\in  {\mathbb{H}}^{m\times n}$ and ${\bf W}\in  {\mathbb{H}}^{n\times m}$ be a nonzero matrix.  The weighted CMP (WCMP) inverse of ${\bf A}$ with respect to  ${\bf W}$ is defined as
\begin{align}\label{eq:wcmp}
{\bf A}^{c,\dag,W}={\bf A}^{\dag}{\bf A}{\bf W}{\bf A}^{d,W}{\bf W}{\bf A}{\bf A}^{\dag}. \end{align}
\end{defn}

Taking into account Corollary \ref{cor:det_repr_proj_Q} and Lemma  \ref{theor:det_rep_wdraz1}, it follows the next theorem  about determinantal representations of the quaternion WCMP inverse.

\begin{thm}\label{th:detrep_cmp}Let ${\bf A}\in  {\mathbb{H}}^{m\times n}_r$ and ${\bf W}\in  {\mathbb{H}}^{n\times m}$ be a nonzero matrix. Suppose  $k=\max\{\Ind ({\bf W}{\bf A}),$ $\Ind ({\bf A}{\bf W})\}$.
 Then the  determinantal representations of its WCMP inverse $ {\bf A}^{c,{\dag},W}= \left(a_{ij}^{c,{\dag},W}\right)$ can be expressed as

(i)~ if  $\rk ({\bf W}{\bf A})^k=\rk {\bf U}^k=r_1$, then
 \begin{align}
a_{ij}^{c,{\dag},W}=&\label{eq:det_wcmp1} {\frac{{\sum\limits_{\alpha \in I_{r,m} {\left\{ {j}
\right\}}} {{{\rm{rdet}} _{j} {\left( {({\bf A} {\bf A}^{ *}
)_{j .} (\widetilde{{\bf \omega}}  _{i  .} )}
\right)  _{\alpha} ^{\alpha} } }
}}
}{{\left({\sum\limits_{\beta \in J_{r,n}} {{ {\left| {
{\bf A}^{ *} {\bf A} } \right|_{\beta
}^{\beta} }  }}}\right)^2
{{\left(
{\sum\limits_{\alpha \in I_{r_1,n}} {{\left|  {\bf U}^{ 2k+1} \left({\bf U}^{ 2k+1} \right)^{*}
 \right|_{\alpha} ^{\alpha}}}}\right)^2 }},
}}}\end{align}
where $\widetilde{{\bf \omega}}  _{i  .}$ is the $i$-th row  of $\widetilde{{\bf \Omega}}={\bf \Omega}({\bf W}{\bf A})^{k+1} {\bf A}^{ *}$. The matrix ${\bf \Omega}=(\omega_{iz})$ is such that $\omega_{iz}$ is determined by
\begin{align}
\omega_{iz}={\sum\limits_{\beta \in J_{r,n} {\left\{ {i}
\right\}}} {{\rm{cdet}} _{i} \left( {\left( {{\bf A}^{ *}
{\bf A}} \right)_{.i} \left({\bf \phi}^{(1)}_{.z} \right)}
\right)  _{\beta} ^{\beta} } }\label{eq:om2}.
\end{align}
Here  ${\bf \phi}^{(1)}_{.z}$ is the $z$-th column of ${\bf \Phi}_1={\bf A}^*{\bf A}{\bf W}\widehat{\bf \Phi}=(\widehat{\phi}_{tz})$ and
\begin{align}
\widehat{\phi}_{tz}:={\sum\limits_{\alpha \in I_{r_1,n} {\left\{ {z}
\right\}}} {{\rm{rdet}} _{z} \left( {\left( { {\bf U}^{ 2k+1} \left({\bf U}^{ 2k+1} \right)^{*}
} \right)_{z.} (\widetilde{ {\bf \phi}}_{t.})} \right)  _{\alpha} ^{\alpha} } }
\label{eq:phih},
\end{align}
where  $\widetilde{ {\bf \phi}}_{t.}$ is the $t$-th row of $
\widetilde{ {\bf \Phi}}:={\bf A}{\bf \Phi}{\bf U}^{2k}({\bf U}^{ 2k+1})^{*}\in {\mathbb{H}}^{m\times n}$, and $
{\bf \Phi}=(\phi_{lq})\in  {\mathbb{H}}^{n\times n}$ such that
\begin{align}\label{eq:phi1}
\phi_{lq}={\sum\limits_{\alpha \in I_{r_1,n} {\left\{ {q}
\right\}}} {{\rm{rdet}} _{q} \left( {\left( { {\bf U}^{ 2k+1} \left({\bf U}^{ 2k+1} \right)^{*}
} \right)_{q.} (\check{ {\bf u}}_{l.})} \right)  _{\alpha} ^{\alpha} } }.
\end{align}
Here
$\check{ {\bf u}}_{l.}$ is the $l$-th row of $
{\bf U}^{k}({\bf U}^{ 2k+1})^{*} =:\check{ {\bf U}}\in {\mathbb{H}}^{n\times n}$.

(ii)~ if  $\rk ({\bf A}{\bf W})^k=\rk {\bf V}^k=r_1$, then
\begin{align}
a_{ij}^{c,{\dag},W}=&\label{eq:det_wcmp2}
 {\frac{\sum\limits_{\beta \in J_{r,n} {\left\{ {i}
\right\}}} {{\rm{cdet}} _{i} \left( {\left( {{\bf A}^{ *}
{\bf A}} \right)_{.i} \left(\widetilde{{\bf \upsilon}}_{.j} \right)}
\right)  _{\beta} ^{\beta} } }{{\left({\sum\limits_{\alpha \in I_{r,m}} {{ {\left| {
{\bf A} {\bf A}^{ *} } \right|_{\alpha
}^{\alpha} }  }}}\right)^2
{\sum\limits_{\beta \in J_{r_1,m}} {{\left|  {\left({\bf V}^{2k+1} \right)^{*} {\bf V}^{2k+1}
}   \right|_{\beta} ^{\beta}}}}
}}}
\end{align}
where $\widetilde{{\bf \upsilon}}_{.j}$ is the $j$-th column of $\widetilde{{\bf \Upsilon}}={\bf A}^{ *}({\bf A}{\bf W})^{k+1} {\bf \Upsilon}$. The matrix $\mathbf{\Upsilon}=(\upsilon_{zj})$ is determined by

\begin{align*}
\upsilon_{zj}=&{\sum\limits_{\alpha \in I_{r,m} {\left\{ {j}
\right\}}} {{{\rm{rdet}} _{j} {\left( {({\bf A} {\bf A}^{ *}
)_{j .} ({\bf { \psi}}^{(1)}  _{z  .} )}
\right)  _{\alpha} ^{\alpha} } }
}},
\end{align*}
where ${\bf { \psi}}^{(1)}  _{z  .}$ is the $z$-th row of ${\bf { \Psi}}^{(1)}=\widehat{\bf { \Psi}}{\bf W}{\bf A}{\bf A}^*$. Here $\widehat{\bf { \Psi}}=( \widehat{ { \psi}}_{zs})$ is such that
 $$
 \widehat{ { \psi}}_{zs}:={\sum\limits_{\beta \in J_{r_1,\,m} {\left\{ {z}
\right\}}} {{\rm{cdet}} _{z} \left( {\left(\left({\bf V}^{ 2k+1} \right)^{*}{\bf V}^{ 2k+1} \right)_{. z} \left( \widetilde{ {\bf \psi}}_{.s}
\right)} \right)  _{\beta} ^{\beta} } }
$$
where $ \widetilde{ {\bf \psi}}_{.s}$ is the $s$-th column of  $
\widetilde{ {\bf \Psi}}:=({\bf V}^{ 2k+1})^{*}{\bf V}^{2k}{\bf \Psi}{\bf A}\in {\mathbb{H}}^{m\times n}$, and $
{\bf \Psi}=(\psi_{lt})\in  {\mathbb{H}}^{m\times m}$ is determined by

$$
\psi_{lt}={\sum\limits_{\beta \in J_{r_1,m} {\left\{ {l}
\right\}}} {{\rm{cdet}} _{l} \left( {\left(\left({\bf V}^{ 2k+1} \right)^{*}{\bf V}^{ 2k+1} \right)_{. l} \left( \hat{ {\bf v}}_{.t}
\right)} \right)  _{\beta} ^{\beta} } }.
$$
Here
$\hat{ {\bf v}}_{.t}$ is the $t$-th column of  $
({\bf V}^{ 2k+1})^{*}{\bf V}^{k} =:\hat{ {\bf V}}\in {\mathbb{H}}^{m\times m}$.
\end{thm}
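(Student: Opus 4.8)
The plan is to treat Theorem \ref{th:detrep_cmp} exactly as the preceding Theorems \ref{th:detrep_dmp} and \ref{th:detrep_mpd} are treated, since the weighted CMP inverse (\ref{eq:wcmp}) is assembled from the same three blocks. First I would use $\mathbf{A}^{\dag}\mathbf{A}=\mathbf{Q}_A$ and $\mathbf{A}\mathbf{A}^{\dag}=\mathbf{P}_A$ to rewrite (\ref{eq:wcmp}) as $\mathbf{A}^{c,\dag,W}=\mathbf{Q}_A\,\mathbf{W}\mathbf{A}^{d,W}\mathbf{W}\,\mathbf{P}_A$ and expand it entrywise,
\begin{align*}
a_{ij}^{c,\dag,W}=\sum_{l,t,s,f} q^A_{il}\,w_{lt}\,a^{d,W}_{ts}\,w_{sf}\,p^A_{fj},
\end{align*}
which is the analogue of the starting identities (\ref{eq:dmp_start}) and (\ref{eq:mpd_start}). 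The split into cases (i) and (ii) then reflects which determinantal representation of $\mathbf{A}^{d,W}$ is inserted: in case (i) the $\mathbf{U}=\mathbf{W}\mathbf{A}$ formula (\ref{eq:det_rep_u}) and in case (ii) the $\mathbf{V}=\mathbf{A}\mathbf{W}$ formula (\ref{eq:det_rep_v}). In both, the projectors are replaced through Corollary \ref{cor:det_repr_proj_Q}, namely $\mathbf{Q}_A$ by its cdet-over-$\mathbf{A}^{*}\mathbf{A}$ form (\ref{eq:det_repr_proj_Q}) and $\mathbf{P}_A$ by its rdet-over-$\mathbf{A}\mathbf{A}^{*}$ form (\ref{eq:det_repr_proj_P}).

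For case (i) I would collapse the nested scalar summations one layer at a time. The innermost weighted-Drazin block already supplies the auxiliary matrix $\mathbf{\Phi}$ of (\ref{eq:phi1}); absorbing the factor $\mathbf{A}$ together with the $\mathbf{U}$-powers of (\ref{eq:det_rep_u}) yields $\widetilde{\mathbf{\Phi}}=\mathbf{A}\mathbf{\Phi}\mathbf{U}^{2k}\left(\mathbf{U}^{2k+1}\right)^{*}$, and one further rdet layer produces $\widehat{\mathbf{\Phi}}$ of (\ref{eq:phih}). Threading the remaining $\mathbf{W}$ and the $\mathbf{Q}_A$-block through gives the column combinations defining $\mathbf{\Phi}_1=\mathbf{A}^{*}\mathbf{A}\mathbf{W}\widehat{\mathbf{\Phi}}$ and then the entries $\omega_{iz}$ of (\ref{eq:om2}); finally, summing $\sum_s\omega_{is}(\cdots)$ against the rdet-over-$\mathbf{A}\mathbf{A}^{*}$ form of $\mathbf{P}_A$, with the row combination $\widetilde{\mathbf{\Omega}}=\mathbf{\Omega}(\mathbf{W}\mathbf{A})^{k+1}\mathbf{A}^{*}$, delivers (\ref{eq:det_wcmp1}). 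Case (ii) is the mirror image: I would insert the $\mathbf{V}$-based representation (\ref{eq:det_rep_v}) of $\mathbf{A}^{d,W}$, build the auxiliary matrices $\mathbf{\Psi}$, $\widetilde{\mathbf{\Psi}}$ and $\widehat{\mathbf{\Psi}}$, apply the rdet-over-$\mathbf{A}\mathbf{A}^{*}$ form of $\mathbf{P}_A$ to form the entries $\upsilon_{zj}$, and close with the cdet-over-$\mathbf{A}^{*}\mathbf{A}$ form of $\mathbf{Q}_A$, using the column combination $\widetilde{\mathbf{\Upsilon}}=\mathbf{A}^{*}(\mathbf{A}\mathbf{W})^{k+1}\mathbf{\Upsilon}$, to obtain (\ref{eq:det_wcmp2}).

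The one mechanical device used repeatedly is that a scalar-weighted sum of column-replaced (resp.\ row-replaced) determinants equals a single determinant whose replaced column (resp.\ row) is the corresponding linear combination; by Lemma \ref{lem:right_cdet},
\begin{align*}
\sum_{t}\left(\sum_{\beta}{\rm cdet}_i\left(\left(\mathbf{A}^{*}\mathbf{A}\right)_{.i}\left(\mathbf{w}_{.t}\right)\right)_{\beta}^{\beta}\right)c_t=\sum_{\beta}{\rm cdet}_i\left(\left(\mathbf{A}^{*}\mathbf{A}\right)_{.i}\left(\sum_{t}\mathbf{w}_{.t}c_t\right)\right)_{\beta}^{\beta},
\end{align*}
together with the left-linear analogue for ${\rm rdet}$ from Lemma \ref{lem:left_rdet}. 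The main obstacle is the bookkeeping forced by noncommutativity: since ${\rm rdet}$ is only left-linear while ${\rm cdet}$ is only right-linear, each summation index can be absorbed from a single side alone, so $\mathbf{Q}_A$ (standing to the left in the product) must be carried through its cdet form and $\mathbf{P}_A$ (standing to the right) through its rdet form, and every intermediate matrix must be assembled in the exact left-to-right order dictated by (\ref{eq:wcmp}) with each scalar kept on its proper side. Finally, the squared projector normalizers in (\ref{eq:det_wcmp1}) and (\ref{eq:det_wcmp2}) reflect that $\mathbf{Q}_A$ and $\mathbf{P}_A$ each contribute one factor of the common value $\sum_{\beta\in J_{r,n}}\left|\mathbf{A}^{*}\mathbf{A}\right|_{\beta}^{\beta}=\sum_{\alpha\in I_{r,m}}\left|\mathbf{A}\mathbf{A}^{*}\right|_{\alpha}^{\alpha}$, while the weighted-Drazin block contributes the normalizing minor sum recorded in its representation, so that collecting these produces the denominators as displayed.
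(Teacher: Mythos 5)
Your proposal is correct and follows essentially the same route as the paper's own proof: the same entrywise expansion (\ref{eq:cmp_start}), the same insertion of the projector representations (\ref{eq:det_repr_proj_Q})--(\ref{eq:det_repr_proj_P}) and of the weighted Drazin representations (\ref{eq:det_rep_u}) and (\ref{eq:det_rep_v}), and the same layer-by-layer absorption of the scalar sums via the one-sided linearity of ${\rm cdet}$ and ${\rm rdet}$ into the auxiliary matrices ${\bf \Phi},\widehat{\bf \Phi},{\bf \Omega},\widetilde{\bf \Omega}$ and ${\bf \Psi},\widehat{\bf \Psi},{\bf \Upsilon},\widetilde{\bf \Upsilon}$. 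One caveat: your bookkeeping (exactly like the paper's own derivation) yields the factor $\left(\sum_{\beta \in J_{r_1,m}}\left|\left({\bf V}^{2k+1}\right)^{*}{\bf V}^{2k+1}\right|_{\beta}^{\beta}\right)^{2}$ in the denominator of case (ii), i.e.\ squared as it enters through (\ref{eq:det_rep_v}), whereas the displayed formula (\ref{eq:det_wcmp2}) carries this sum only to the first power --- a discrepancy internal to the paper's statement rather than a flaw in your argument, so your claim that the collected factors match the denominators ``as displayed'' should be qualified accordingly.
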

\begin{proof}
Taking into account (\ref{eq:wcmp}), we have
\begin{equation}\label{eq:cmp_start}
a_{ij}^{c,{\dag},W} =  \sum_{l = 1}^n \sum_{t = 1}^m\sum_{s = 1}^n \sum_{f = 1}^m q^A_{il}w_{lt}{a}_{ts}^{d,W}w_{sf} p^A_{fj},
\end{equation}
where ${\bf Q}_A=(q^A_{il})\in{\mathbb{H}}^{n \times
n}$, ${\bf A}^{d,W}=(a^{d,W}_{ts})\in{\mathbb{H}}^{m \times
n}$, and ${\bf P}_A=(p^A_{fj})\in{\mathbb{H}}^{m \times
m}$.

(i)  Denote ${\bf W}_1:={\bf A}^*{\bf A}{\bf W}=(w^{(1)}_{it})$ and ${\bf W}_2:={\bf W}{\bf A}{\bf A}^*=(w^{(2)}_{sj})$. By applying  one of the cases of (\ref{eq:det_repr_proj_Q}) and (\ref{eq:det_repr_proj_P}) for the determinantal representations of   ${\bf Q}_A$ and ${\bf P}_A$, respectively, we have
\begin{align}\nonumber
\sum_{l = 1}^n q^A_{il}w_{lt}=&\sum_{l = 1}^n {\frac{{{\sum\limits_{\beta \in J_{r,n} {\left\{ {i}
\right\}}} {{\rm{cdet}} _{i} \left( {\left( {{\bf A}^{ *}
{\bf A}} \right)_{.i} \left({\bf \dot{a}}_{.l} \right)}
\right)  _{\beta} ^{\beta} } }}}{{{\sum\limits_{\beta
\in J_{r,n}}  {{ \left| {{\bf A}^{ *}  {\bf
A}} \right|_{\beta}^{\beta} }}}} }}w_{lt}=\\=& {\frac{{{\sum\limits_{\beta \in J_{r,n} {\left\{ {i}
\right\}}} {{\rm{cdet}} _{i} \left( {\left( {{\bf A}^{ *}
{\bf A}} \right)_{.i} \left({\bf w}^{(1)}_{.t} \right)}
\right)  _{\beta} ^{\beta} } }}}{{{\sum\limits_{\beta
\in J_{r,n}}  {{ \left| {{\bf A}^{ *}  {\bf
A}} \right|_{\beta}^{\beta} }}}} }},\label{eq:ad1}
\\ \nonumber
\sum_{f = 1}^m w_{sf} p^A_{fj}=&\sum_{f = 1}^m w_{sf}
{\frac{{{\sum\limits_{\alpha \in I_{r,m} {\left\{ {j}
\right\}}} {{{\rm{rdet}} _{j} {\left( {({\bf A} {\bf A}^{ *}
)_{j .} ({\bf \ddot{a}}  _{f  .} )}
\right)  _{\alpha} ^{\alpha} } }}}
}}{{{\sum\limits_{\alpha \in I_{r,m}} {{ {\left| {
{\bf A} {\bf A}^{ *} } \right| _{\alpha
}^{\alpha} }  }}} }}}\\=&{\frac{{{\sum\limits_{\alpha \in I_{r,m} {\left\{ {j}
\right\}}} {{{\rm{rdet}} _{j} {\left( {({\bf A} {\bf A}^{ *}
)_{j .} ({\bf w}^{(2)}  _{s  .} )}
\right)  _{\alpha} ^{\alpha} } }}}
}}{{{\sum\limits_{\alpha \in I_{r,m}} {{ {\left| {
{\bf A} {\bf A}^{ *} } \right| _{\alpha
}^{\alpha} }  }}} }}}.\label{eq:ad2}
\end{align}
Substituting (\ref{eq:ad1}) and (\ref{eq:ad2}) into (\ref{eq:cmp_start}), denoting ${\bf W}_3=\left({\bf W}{\bf A}\right)^{k+1}{\bf A}^*$,  and applying (\ref{eq:det_rep_u}) for the determinantal representation of  ${\bf A}^{d,W}$ give

\begin{align}\nonumber
a_{ij}^{c,{\dag},W} =&\sum_{t = 1}^m\sum_{s = 1}^n{\frac{{{\sum\limits_{\beta \in J_{r,n} {\left\{ {i}
\right\}}} {{\rm{cdet}} _{i} \left( {\left( {{\bf A}^{ *}
{\bf A}} \right)_{.i} \left({\bf w}^{(1)}_{.t} \right)}
\right)  _{\beta} ^{\beta} } }}}{{{\sum\limits_{\beta
\in J_{r,n}}  {{ \left| {{\bf A}^{ *}  {\bf
A}} \right|_{\beta}^{\beta} }}}} }}\times \\&\nonumber
{\frac{\sum\limits_{z = 1}^{n}\left({{\sum\limits_{\alpha \in I_{r_1,n} {\left\{ {z}
\right\}}} {{\rm{rdet}} _{z} \left( {\left( { {\bf U}^{ 2k+1} \left({\bf U}^{ 2k+1} \right)^{*}
} \right)_{z.} (\widetilde{ {\bf \phi}}_{t.})} \right)  _{\alpha} ^{\alpha} } }
}\right){u}_{zs}^{(k)}}{{\left(
{\sum\limits_{\alpha \in I_{r_1,n}} {{\left|  {\bf U}^{ 2k+1} \left({\bf U}^{ 2k+1} \right)^{*}
 \right|_{\alpha} ^{\alpha}}}}\right)^2 }}}\times\\
&{\frac{{{\sum\limits_{\alpha \in I_{r,m} {\left\{ {j}
\right\}}} {{{\rm{rdet}} _{j} {\left( {({\bf A} {\bf A}^{ *}
)_{j .} ({\bf w}^{(2)}  _{s  .} )}
\right)  _{\alpha} ^{\alpha} } }}}
}}{{{\sum\limits_{\alpha \in I_{r,m}} {{ {\left| {
{\bf A} {\bf A}^{ *} } \right| _{\alpha
}^{\alpha} }  }}} }}}=\nonumber \\=\nonumber &\sum_{t = 1}^m\sum_{z = 1}^n{\frac{\sum\limits_{\beta \in J_{r,n} {\left\{ {i}
\right\}}} {{\rm{cdet}} _{i} \left( {\left( {{\bf A}^{ *}
{\bf A}} \right)_{.i} \left({\bf w}^{(1)}_{.t} \right)}
\right)  _{\beta} ^{\beta} } }{{{\sum\limits_{\beta
\in J_{r,n}}  {{ \left| {{\bf A}^{ *}  {\bf
A}} \right|_{\beta}^{\beta} }}}} }}\times\end{align}\begin{align} \nonumber
 &{\frac{\sum\limits_{\alpha \in I_{r_1,n} {\left\{ {z}
\right\}}} {{\rm{rdet}} _{z} \left( {\left( { {\bf U}^{ 2k+1} \left({\bf U}^{ 2k+1} \right)^{*}
} \right)_{z.} (\widetilde{ {\bf \phi}}_{t.})} \right)  _{\alpha} ^{\alpha} } }{{\left(
{\sum\limits_{\alpha \in I_{r_1,n}} {{\left|  {\bf U}^{ 2k+1} \left({\bf U}^{ 2k+1} \right)^{*}
 \right|_{\alpha} ^{\alpha}}}}\right)^2 }}}\times\\
&{\frac{\sum\limits_{\alpha \in I_{r,m} {\left\{ {j}
\right\}}} {{{\rm{rdet}} _{j} {\left( {({\bf A} {\bf A}^{ *}
)_{j .} ({\bf w}^{(3)}  _{z  .} )}
\right)  _{\alpha} ^{\alpha} } }
}}{{{\sum\limits_{\alpha \in I_{r,m}} {{ {\left| {
{\bf A} {\bf A}^{ *} } \right| _{\alpha
}^{\alpha} }  }}} }}},
\label{eq:det_wcmp_s}
\end{align}
where  $\widetilde{ {\bf \phi}}_{t.}$ is the $t$-th row of $
\widetilde{ {\bf \Phi}}:={\bf A}{\bf \Phi}{\bf U}^{2k}({\bf U}^{ 2k+1})^{*}\in {\mathbb{H}}^{m\times n}$, and $
{\bf \Phi}=(\phi_{lq})\in  {\mathbb{H}}^{m\times n}$ is such that
\begin{align*}
\phi_{lq}={\sum\limits_{\alpha \in I_{r_1,n} {\left\{ {q}
\right\}}} {{\rm{rdet}} _{q} \left( {\left( { {\bf U}^{ 2k+1} \left({\bf U}^{ 2k+1} \right)^{*}
} \right)_{q.} (\check{ {\bf u}}_{l.})} \right)  _{\alpha} ^{\alpha} } }.
\end{align*}
Here
$\check{ {\bf u}}_{l.}$ is the $l$-th row of $
{\bf U}^{k}({\bf U}^{ 2k+1})^{*} =:\check{ {\bf U}}\in {\mathbb{H}}^{m\times n}$.
Denote
$$
\widehat{\phi}_{tz}:={\sum\limits_{\alpha \in I_{r_1,n} {\left\{ {z}
\right\}}} {{\rm{rdet}} _{z} \left( {\left( { {\bf U}^{ 2k+1} \left({\bf U}^{ 2k+1} \right)^{*}
} \right)_{z.} (\widetilde{ {\bf \phi}}_{t.})} \right)  _{\alpha} ^{\alpha} } }
$$ and construct the matrix $\widehat{\bf\Phi}=(\widehat{\phi}_{tz})$. Then, determine

\begin{align*}
\omega_{iz}=&\sum_{t = 1}^n{\sum\limits_{\beta \in J_{r,n} {\left\{ {i}
\right\}}} {{\rm{cdet}} _{i} \left( {\left( {{\bf A}^{ *}
{\bf A}} \right)_{.i} \left({\bf w}^{(1)}_{.t} \right)}
\right)  _{\beta} ^{\beta} } }\widehat{\phi}_{tz}=\end{align*}\begin{align*}=&{\sum\limits_{\beta \in J_{r,n} {\left\{ {i}
\right\}}} {{\rm{cdet}} _{i} \left( {\left( {{\bf A}^{ *}
{\bf A}} \right)_{.i} \left({\bf \phi}^{(1)}_{.z} \right)}
\right)  _{\beta} ^{\beta} } }
\end{align*}
where ${\bf \phi}^{(1)}_{.t}$ is the $t$-th column of ${\bf \Phi}_1={\bf W}_1 \widehat{\bf\Phi}={\bf A}^*{\bf A}{\bf W}\widehat{\bf\Phi}$ and construct the matrix ${\bf\Omega}=({\omega}_{iz})$.
Taking into account  ${\sum\limits_{\alpha \in I_{r,m}} {{ {\left| {
{\bf A} {\bf A}^{ *} } \right| _{\alpha
}^{\alpha} }  }}}={\sum\limits_{\beta
\in J_{r,n}}  {{ \left| {{\bf A}^{ *}  {\bf
A}} \right|_{\beta}^{\beta} }}}$,
and
\begin{align*}
\sum_{z = 1}^n\omega_{iz}{\sum\limits_{\alpha \in I_{r,m} {\left\{ {j}
\right\}}} {{{\rm{rdet}} _{j} {\left( {({\bf A} {\bf A}^{ *}
)_{j .} ({\bf w}^{(3)}  _{z  .} )}
\right)  _{\alpha} ^{\alpha} } }
}}={\sum\limits_{\alpha \in I_{r,m} {\left\{ {j}
\right\}}} {{{\rm{rdet}} _{j} {\left( {({\bf A} {\bf A}^{ *}
)_{j .} (\widetilde{{\bf \omega}}  _{i  .} )}
\right)  _{\alpha} ^{\alpha} } }
}},
\end{align*} where $\widetilde{{\bf \omega}}  _{i  .}$ is the $i$-th row of $\widetilde{{\bf \Omega}}={\bf \Omega}{\bf W}_3={\bf \Omega}({\bf W}{\bf A})^{k+1} {\bf A}^{ *}$,
finally from (\ref{eq:det_wcmp_s}), it follows (\ref{eq:det_wcmp1}).

(ii) By applying  the determinantal representations (\ref{eq:det_rep_v}) for  ${\bf A}^{d,W}$ and the same as in the above point for   ${\bf Q}_A$ and ${\bf P}_A$ , we get

\begin{align}\nonumber
a_{ij}^{c,{\dag},W} =&\sum_{t = 1}^m\sum_{s = 1}^n{\frac{{{\sum\limits_{\beta \in J_{r,n} {\left\{ {i}
\right\}}} {{\rm{cdet}} _{i} \left( {\left( {{\bf A}^{ *}
{\bf A}} \right)_{.i} \left({\bf w}^{(1)}_{.t} \right)}
\right)  _{\beta} ^{\beta} } }}}{{{\sum\limits_{\beta
\in J_{r,n}}  {{ \left| {{\bf A}^{ *}  {\bf
A}} \right|_{\beta}^{\beta} }}}} }}\times\end{align} \begin{align}&\nonumber
{\frac{{ \sum\limits_{z = 1}^{m} {v}_{tz}^{(k)}   {\sum\limits_{\beta \in J_{r_1,\,m} {\left\{ {z}
\right\}}} {{\rm{cdet}} _{z} \left( {\left(\left({\bf V}^{ 2k+1} \right)^{*}{\bf V}^{ 2k+1} \right)_{. z} \left( \widetilde{ {\bf \psi}}_{.s}
\right)} \right)  _{\beta} ^{\beta} } }
}}{{\left({\sum\limits_{\beta \in J_{r_1,\,m}} {{\left| \left({\bf V}^{ 2k+1} \right)^{*}{\bf V}^{ 2k+1}
  \right|_{\beta} ^{\beta}}}} \right)^2}}}\times\\
&{\frac{{{\sum\limits_{\alpha \in I_{r,m} {\left\{ {j}
\right\}}} {{{\rm{rdet}} _{j} {\left( {({\bf A} {\bf A}^{ *}
)_{j .} ({\bf w}^{(2)}  _{s  .} )}
\right)  _{\alpha} ^{\alpha} } }}}
}}{{{\sum\limits_{\alpha \in I_{r,m}} {{ {\left| {
{\bf A} {\bf A}^{ *} } \right| _{\alpha
}^{\alpha} }  }}} }}}=\nonumber\\=\nonumber&\sum_{z = 1}^m\sum_{s = 1}^n{\frac{\sum\limits_{\beta \in J_{r,n} {\left\{ {i}
\right\}}} {{\rm{cdet}} _{i} \left( {\left( {{\bf A}^{ *}
{\bf A}} \right)_{.i} \left({\bf w}^{(3)}_{.z} \right)}
\right)  _{\beta} ^{\beta} } }{{{\sum\limits_{\beta
\in J_{r,n}}  {{ \left| {{\bf A}^{ *}  {\bf
A}} \right|_{\beta}^{\beta} }}}} }}\times \\&\nonumber
{\frac{{    {\sum\limits_{\beta \in J_{r_1,\,m} {\left\{ {z}
\right\}}} {{\rm{cdet}} _{z} \left( {\left(\left({\bf V}^{ 2k+1} \right)^{*}{\bf V}^{ 2k+1} \right)_{. z} \left( \widetilde{ {\bf \psi}}_{.s}
\right)} \right)  _{\beta} ^{\beta} } }
}}{{\left({\sum\limits_{\beta \in J_{r_1,\,m}} {{\left| \left({\bf V}^{ 2k+1} \right)^{*}{\bf V}^{ 2k+1}
  \right|_{\beta} ^{\beta}}}} \right)^2}}}\times\\
&{\frac{\sum\limits_{\alpha \in I_{r,m} {\left\{ {j}
\right\}}} {{{\rm{rdet}} _{j} {\left( {({\bf A} {\bf A}^{ *}
)_{j .} ({\bf w}^{(2)}  _{s  .} )}
\right)  _{\alpha} ^{\alpha} } }
}}{{{\sum\limits_{\alpha \in I_{r,m}} {{ {\left| {
{\bf A} {\bf A}^{ *} } \right| _{\alpha
}^{\alpha} }  }}} }}}
\label{eq:det_wcmp_s2}
\end{align}
where ${\bf w}^{(3)}_{.z}$ is the $z$-th column of ${\bf W}_3:={\bf A}^*({\bf A}{\bf W})^{k+1}$ and ${\bf w}^{(2)}  _{s  .}$ is the $s$-th row of ${\bf W}_2:={\bf W}{\bf A}{\bf A}^*=(w^{(2)}_{sj})$.
Denote $$
 \widehat{ { \psi}}_{zs}:={\sum\limits_{\beta \in J_{r_1,\,m} {\left\{ {z}
\right\}}} {{\rm{cdet}} _{z} \left( {\left(\left({\bf V}^{ 2k+1} \right)^{*}{\bf V}^{ 2k+1} \right)_{. z} \left( \widetilde{ {\bf \psi}}_{.s}
\right)} \right)  _{\beta} ^{\beta} } }
$$ and construct the matrix $\widehat{\bf { \Psi}}=( \widehat{ { \psi}}_{zs})$. Then, introduce
\begin{align*}
\upsilon_{zj}=&\sum_{s = 1}^n\widehat{ { \psi}}_{zs}{\sum\limits_{\alpha \in I_{r,m} {\left\{ {j}
\right\}}} {{{\rm{rdet}} _{j} {\left( {({\bf A} {\bf A}^{ *}
)_{j .} ({\bf w}^{(2)}  _{s  .} )}
\right)  _{\alpha} ^{\alpha} } }
}}=\\=&{\sum\limits_{\alpha \in I_{r,m} {\left\{ {j}
\right\}}} {{{\rm{rdet}} _{j} {\left( {({\bf A} {\bf A}^{ *}
)_{j .} ({\bf { \psi}}^{(1)}  _{z  .} )}
\right)  _{\alpha} ^{\alpha} } }
}},
\end{align*}
where ${\bf { \psi}}^{(1)}  _{z  .}$ is the $z$-th row of ${\bf { \Psi}}^{(1)}=\widehat{\bf { \Psi}}{\bf W}_2=\widehat{\bf { \Psi}}{\bf W}{\bf A}{\bf A}^*$ and construct the matrix $\mathbf{\Upsilon}=(\upsilon_{zj})$.
Taking into account that
\begin{align*}
\sum_{z = 1}^n{\sum\limits_{\beta \in J_{r,n} {\left\{ {i}
\right\}}} {{\rm{cdet}} _{i} \left( {\left( {{\bf A}^{ *}
{\bf A}} \right)_{.i} \left({\bf w}^{(3)}_{.z} \right)}
\right)  _{\beta} ^{\beta} } }\upsilon_{zj}={\sum\limits_{\beta \in J_{r,n} {\left\{ {i}
\right\}}} {{\rm{cdet}} _{i} \left( {\left( {{\bf A}^{ *}
{\bf A}} \right)_{.i} \left(\widetilde{{\bf \upsilon}}_{.j} \right)}
\right)  _{\beta} ^{\beta} } },
\end{align*} where $\widetilde{{\bf \upsilon}}_{.j}$ is the $j$-th column of $\widetilde{{\bf \Upsilon}}={\bf W}_3{\bf \Upsilon}={\bf A}^{ *}({\bf A}{\bf W})^{k+1} {\bf \Upsilon}$,
finally from (\ref{eq:det_wcmp_s2}), it follows (\ref{eq:det_wcmp2}).
\end{proof}

Simpler expressions of determinantal representations of the WCMP inverse can be obtained in the cases having Hermicity.
\begin{thm}\label{th:detrep_cmph}Let ${\bf A}\in  {\mathbb{H}}^{m\times n}_r$ and ${\bf W}\in  {\mathbb{H}}^{n\times m}$ be a nonzero matrix. Suppose  $k=\max\{\Ind ({\bf W}{\bf A}),$ $\Ind ({\bf A}{\bf W})\}$.
 Then the  determinantal representations of its WCMP inverse $ {\bf A}^{c,{\dag},W}= \left(a_{ij}^{c,{\dag},W}\right)$ can be expressed as

(i)~ if ${\bf W}{\bf A}$ is Hermitian and $\rk ({\bf W}{\bf A})^k=r_1$, then
 \begin{align}
a_{ij}^{c,{\dag},W}=&\label{eq:det_wcmph1} {\frac{{\sum\limits_{\alpha \in I_{r,m} {\left\{ {j}
\right\}}} {{{\rm{rdet}} _{j} {\left( {({\bf A} {\bf A}^{ *}
)_{j .} (\widetilde{{\bf \omega}}  _{i  .} )}
\right)  _{\alpha} ^{\alpha} } }
}}
}{{\left({\sum\limits_{\beta \in J_{r,n}} {{ {\left| {
{\bf A}^{ *} {\bf A} } \right|_{\beta
}^{\beta} }  }}}\right)^2
{\sum\limits_{\alpha \in
I_{r_1,n}}  {{\left| {\left({\bf W} {{\rm {\bf A}} } \right)^{k+2}   } \right|_{\alpha} ^{\alpha}}}}
}}}\end{align}
where $\widetilde{{\bf \omega}}  _{i  .}$ is the $i$-th row  of $\widetilde{{\bf \Omega}}={\bf \Omega}{\bf W}{\bf A} {\bf A}^{ *}$. The matrix ${\bf \Omega}=(\omega_{is})$ is such that
\begin{align*}
\omega_{is}=&{\sum\limits_{\alpha
\in I_{r_1,n} {\left\{ {s} \right\}}} {{\rm{rdet}} _{s} \left(
{({\bf W}{\rm {\bf A}} )^{ k+2}_{s.} ( {\bf \phi}^{(1)}_{i.})}
\right)_{\alpha} ^{\alpha} } }.
\end{align*}
Here  ${\bf \phi}^{(1)}_{i.}$  is the $i$-th row of ${\bf \Phi}_1={\bf\Phi}{\bf A}\left({\bf W}{\bf A}\right)^k$  and the matrix ${\bf\Phi}=({\phi}_{it})$ is such that
\begin{align*}
{\phi}_{it}:={\sum\limits_{\beta \in J_{r,n} {\left\{ {i}
\right\}}} {{\rm{cdet}} _{i} \left( {\left( {{\bf A}^{ *}
{\bf A}} \right)_{.i} \left({\bf w}^{(1)}_{.t} \right)}
\right)  _{\beta} ^{\beta} } },
\end{align*}
where ${\bf w}^{(1)}_{.t}$ is the $t$-th column of ${\bf W}_1={\bf A}^*{\bf A}{\bf W}$.

(ii)~ if ${\bf A}{\bf W}$ is Hermitian and  $\rk ({\bf A}{\bf W})^k=r_1$, then
\begin{align}
a_{ij}^{c,{\dag},W}=&\label{eq:det_wcmph2}
 {\frac{\sum\limits_{\beta \in J_{r,n} {\left\{ {i}
\right\}}} {{\rm{cdet}} _{i} \left( {\left( {{\bf A}^{ *}
{\bf A}} \right)_{.i} \left(\widetilde{{\bf \upsilon}}_{.j} \right)}
\right)  _{\beta} ^{\beta} } }{{\left({\sum\limits_{\alpha \in I_{r,m}} {{ {\left| {
{\bf A} {\bf A}^{ *} } \right|_{\alpha
}^{\alpha} }  }}}\right)^2
{\sum\limits_{\beta \in J_{r_1,m}} {{\left|
{\left( { {\bf A}{\bf W}} \right)^{k+2}}  \right|_{\beta} ^{\beta}}} }
}}}
\end{align}
where $\widetilde{{\bf \upsilon}}_{.j}$ is the $j$-th column of $\widetilde{{\bf \Upsilon}}={\bf A}^{ *}{\bf A}{\bf W} {\bf \Upsilon}$. The matrix $\mathbf{\Upsilon}=(\upsilon_{tj})$ is determined by
\begin{align*}
\upsilon_{tj}=&{\sum\limits_{\beta
\in J_{r_1,\,m} {\left\{ {t} \right\}}} {{\rm{cdet}} _{t} \left(
{\left( {{\bf A}{\bf W}} \right)^{k+2}_{. t} \left({\bf { \psi}}^{(1)}_{.j}   \right)} \right) _{\beta}^{\beta} }  },
\end{align*}
 where ${\bf { \psi}}^{(1)}  _{.j}$ is the $j$-th column of ${\bf { \Psi}}^{(1)}=({\bf A}{\bf W})^{k}{\bf A}{\bf { \Psi}}$. Here ${\bf { \Psi}}=({ \psi}_{sj})$ is such that
 $$
  { \psi}_{sj}:={\sum\limits_{\alpha \in I_{r,m} {\left\{ {j}
\right\}}} {{{\rm{rdet}} _{j} {\left( {({\bf A} {\bf A}^{ *}
)_{j .} ({\bf w}^{(2)}  _{s  .} )}
\right)  _{\alpha} ^{\alpha} } }}},
$$
where  ${\bf w}^{(2)}  _{s  .}$ is the $s$-th row of ${\bf W}_2={\bf W}{\bf A}{\bf A}^*$.
\end{thm}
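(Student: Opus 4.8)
The plan is to follow the same scheme as in the proof of Theorem \ref{th:detrep_cmp}, but to exploit the Hermicity hypotheses by substituting the simpler Hermitian-case determinantal representations of the weighted Drazin inverse from Lemma \ref{theor:det_rep_wdrazh} in place of the general forms used there. I start from the defining identity (\ref{eq:wcmp}), ${\bf A}^{c,\dag,W}={\bf A}^{\dag}{\bf A}{\bf W}{\bf A}^{d,W}{\bf W}{\bf A}{\bf A}^{\dag}$, written entrywise as
$$a_{ij}^{c,{\dag},W} = \sum_{l = 1}^n\sum_{t = 1}^m\sum_{s = 1}^n\sum_{f = 1}^m q^A_{il}\,w_{lt}\,a^{d,W}_{ts}\,w_{sf}\,p^A_{fj},$$
where ${\bf Q}_A=(q^A_{il})$, ${\bf A}^{d,W}=(a^{d,W}_{ts})$, and ${\bf P}_A=(p^A_{fj})$. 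For case (i), where ${\bf W}{\bf A}$ is Hermitian, I would insert the column-determinantal representation (\ref{eq:det_repr_proj_Q}) of ${\bf Q}_A$, the row-determinantal Hermitian representation (\ref{eq:dr_rep_wrdet}) of ${\bf A}^{d,W}$, and the row-determinantal representation (\ref{eq:det_repr_proj_P}) of ${\bf P}_A$.

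The computation then proceeds by collapsing the four summations one layer at a time. First, summing over $l$ and using the right linearity of the column determinant (Lemma \ref{lem:right_cdet}), the factor ${\bf Q}_A$ together with the weight $w_{lt}$ aggregates into a single column determinant whose replaced column is the $t$-th column of ${\bf A}^*{\bf A}{\bf W}$; this defines the matrix ${\bf \Phi}=(\phi_{it})$. Next, combining ${\bf \Phi}$ with the row-argument $\bar{\bf u}_{t.}$ of the Drazin representation (the $t$-th row of ${\bf A}({\bf W}{\bf A})^k$) and invoking the left linearity of the row determinant (Lemma \ref{lem:left_rdet}) produces the row of ${\bf \Phi}_1={\bf \Phi}{\bf A}({\bf W}{\bf A})^k$, and summing against the $({\bf W}{\bf A})^{k+2}$ numerator yields the matrix ${\bf \Omega}=(\omega_{is})$. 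Finally, folding in ${\bf P}_A$ with the remaining weight $w_{sf}$, again by Lemma \ref{lem:left_rdet}, absorbs the last summation and produces the row $\widetilde{\bf \omega}_{i.}$ of $\widetilde{\bf \Omega}={\bf \Omega}{\bf W}{\bf A}{\bf A}^*$, which is the numerator of (\ref{eq:det_wcmph1}). The two copies of $\sum_{\beta\in J_{r,n}}|{\bf A}^*{\bf A}|_\beta^\beta$ in the denominator arise from the ${\bf Q}_A$ and ${\bf P}_A$ factors after using the identity $\sum_{\alpha\in I_{r,m}}|{\bf A}{\bf A}^*|_\alpha^\alpha=\sum_{\beta\in J_{r,n}}|{\bf A}^*{\bf A}|_\beta^\beta$, while the single factor $\sum_{\alpha\in I_{r_1,n}}|({\bf W}{\bf A})^{k+2}|_\alpha^\alpha$ comes from the Hermitian form of ${\bf A}^{d,W}$.

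Case (ii), with ${\bf A}{\bf W}$ Hermitian, is entirely symmetric: I would instead use the column-determinantal Hermitian representation (\ref{eq:dr_rep_wcdet}) of ${\bf A}^{d,W}$ and carry out the analogous collapse in the column direction, defining ${\bf \Psi}=(\psi_{sj})$ from the ${\bf P}_A$ layer, then ${\bf \Psi}^{(1)}=({\bf A}{\bf W})^{k}{\bf A}{\bf \Psi}$ and ${\bf \Upsilon}=(\upsilon_{tj})$, and reading off the column $\widetilde{\bf \upsilon}_{.j}$ of $\widetilde{\bf \Upsilon}={\bf A}^*{\bf A}{\bf W}{\bf \Upsilon}$ to obtain (\ref{eq:det_wcmph2}). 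The main obstacle I anticipate is the noncommutative bookkeeping: because quaternion entries do not commute, the scalar weights $w_{lt}$ and $w_{sf}$ must be absorbed on the correct side of each determinant, which forces me to use Lemma \ref{lem:right_cdet} for every column-determinant aggregation and Lemma \ref{lem:left_rdet} for every row-determinant aggregation; keeping the order of the matrix products straight through the successive collapses, so that the intermediate matrices ${\bf \Phi}_1$, ${\bf \Omega}$, $\widetilde{\bf \Omega}$ and their column-side analogues come out as the stated products, is the delicate part, whereas the supporting algebraic identities are routine once the sides are fixed.
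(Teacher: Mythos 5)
Your proposal is correct and follows essentially the same route as the paper's own proof: the same entrywise expansion of (\ref{eq:wcmp}), the same substitutions of (\ref{eq:det_repr_proj_Q}) and (\ref{eq:det_repr_proj_P}) for ${\bf Q}_A$ and ${\bf P}_A$ together with the Hermitian-case representations of ${\bf A}^{d,W}$, and the same layer-by-layer collapse via Lemmas \ref{lem:right_cdet} and \ref{lem:left_rdet}, producing the intermediate matrices ${\bf \Phi}$, ${\bf \Omega}$, $\widetilde{{\bf \Omega}}$ in case (i) and ${\bf \Psi}$, ${\bf \Psi}^{(1)}$, ${\bf \Upsilon}$, $\widetilde{{\bf \Upsilon}}$ in case (ii), with the denominator combined by $\sum_{\alpha \in I_{r,m}}\left|{\bf A}{\bf A}^{*}\right|_{\alpha}^{\alpha}=\sum_{\beta \in J_{r,n}}\left|{\bf A}^{*}{\bf A}\right|_{\beta}^{\beta}$ exactly as in the paper. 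One small point in your favor: in case (ii) your citation of (\ref{eq:dr_rep_wcdet}) is the right one, whereas the paper's text there cites (\ref{eq:dr_rep_wrdet}) (evidently a typo) while actually using the column-determinant Hermitian form.
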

\begin{proof}(i)~
Taking into account (\ref{eq:wcmp}), applying  one of the cases of (\ref{eq:det_repr_proj_Q}) and (\ref{eq:det_repr_proj_P}) for the determinantal representations of   ${\bf Q}_A$ and ${\bf P}_A$, respectively,   and  (\ref{eq:dr_rep_wrdet}) for the determinantal representation of  ${\bf A}^{d,W}$ give

\begin{align}\nonumber
a_{ij}^{c,{\dag},W} =&\sum_{t = 1}^m\sum_{s = 1}^n{\frac{{{\sum\limits_{\beta \in J_{r,n} {\left\{ {i}
\right\}}} {{\rm{cdet}} _{i} \left( {\left( {{\bf A}^{ *}
{\bf A}} \right)_{.i} \left({\bf w}^{(1)}_{.t} \right)}
\right)  _{\beta} ^{\beta} } }}}{{{\sum\limits_{\beta
\in J_{r,n}}  {{ \left| {{\bf A}^{ *}  {\bf
A}} \right|_{\beta}^{\beta} }}}} }}\times \\&
 {\frac{\sum\limits_{\alpha
\in I_{r_1,n} {\left\{ {s} \right\}}} {{\rm{rdet}} _{s} \left(
{({\bf W}{\rm {\bf A}} )^{ k+2}_{s.} ( {\bf \bar{u}}_{t.} )}
\right)_{\alpha} ^{\alpha} } }{\sum\limits_{\alpha \in
I_{r_1,n}}  {{\left| {\left({\bf W} {{\rm {\bf A}} } \right)^{k+2}   } \right|_{\alpha} ^{\alpha}}}} }{\frac{{{\sum\limits_{\alpha \in I_{r,m} {\left\{ {j}
\right\}}} {{{\rm{rdet}} _{j} {\left( {({\bf A} {\bf A}^{ *}
)_{j .} ({\bf w}^{(2)}  _{s  .} )}
\right)  _{\alpha} ^{\alpha} } }}}
}}{{{\sum\limits_{\alpha \in I_{r,m}} {{ {\left| {
{\bf A} {\bf A}^{ *} } \right| _{\alpha
}^{\alpha} }  }}} }}},
\label{eq:det_wcmph_s}
\end{align}
where ${\bf w}^{(1)}_{.t}$ is the $t$-th column of ${\bf W}_1={\bf A}^*{\bf A}{\bf W}$,
${\bf \bar{u}}_{t.}$  is the $t$-th row of $\bar{{\bf U}}={\bf A}({\bf W}{\bf A})^k$, and ${\bf w}^{(2)}  _{s  .}$ is the $s$-th row of ${\bf W}_2={\bf W}{\bf A}{\bf A}^*$.

Denote
$$
{\phi}_{it}:={\sum\limits_{\beta \in J_{r,n} {\left\{ {i}
\right\}}} {{\rm{cdet}} _{i} \left( {\left( {{\bf A}^{ *}
{\bf A}} \right)_{.i} \left({\bf w}^{(1)}_{.t} \right)}
\right)  _{\beta} ^{\beta} } }
$$ and construct the matrix ${\bf\Phi}=({\phi}_{it})$. Then, determine
\begin{align*}
\omega_{is}=&\sum_{t = 1}^n{\phi}_{it} {\sum\limits_{\alpha
\in I_{r_1,n} {\left\{ {s} \right\}}} {{\rm{rdet}} _{s} \left(
{({\bf W}{\rm {\bf A}} )^{ k+2}_{s.} ( {\bf \bar{u}}_{t.} )}
\right)_{\alpha} ^{\alpha} } }=\\=&{\sum\limits_{\alpha
\in I_{r_1,n} {\left\{ {s} \right\}}} {{\rm{rdet}} _{s} \left(
{({\bf W}{\rm {\bf A}} )^{ k+2}_{s.} ( {\bf \phi}^{(1)}_{i.})}
\right)_{\alpha} ^{\alpha} } }
\end{align*}
where ${\bf \phi}^{(1)}_{i.}$ is the $i$-th row of ${\bf \Phi}_1={\bf\Phi}{\bf A}\left({\bf W}{\bf A}\right)^k$ and construct the matrix ${\bf\Omega}=(\omega_{is})$.
Taking into account that ${\sum\limits_{\alpha \in I_{r,m}} {{ {\left| {
{\bf A} {\bf A}^{ *} } \right| _{\alpha
}^{\alpha} }  }}}={\sum\limits_{\beta
\in J_{r,n}}  {{ \left| {{\bf A}^{ *}  {\bf
A}} \right|_{\beta}^{\beta} }}}$,
and
\begin{align*}
\sum_{s = 1}^n\omega_{is}{\sum\limits_{\alpha \in I_{r,m} {\left\{ {j}
\right\}}} {{{\rm{rdet}} _{j} {\left( {({\bf A} {\bf A}^{ *}
)_{j .} ({\bf w}^{(2)}  _{s  .} )}
\right)  _{\alpha} ^{\alpha} } }
}}={\sum\limits_{\alpha \in I_{r,m} {\left\{ {j}
\right\}}} {{{\rm{rdet}} _{j} {\left( {({\bf A} {\bf A}^{ *}
)_{j .} (\widetilde{{\bf \omega}}  _{i  .} )}
\right)  _{\alpha} ^{\alpha} } }
}},
\end{align*} where $\widetilde{{\bf \omega}}  _{i  .}$ is the $i$-th row of $\widetilde{{\bf \Omega}}={\bf \Omega}{\bf W}_2={\bf \Omega}{\bf W}{\bf A}{\bf A}^{ *}$,
finally from (\ref{eq:det_wcmph_s}), it follows (\ref{eq:det_wcmph1}).

(ii) By applying (\ref{eq:dr_rep_wrdet})  for the determinantal representation of  ${\bf A}^{d,W}$ and the same determinantal representations of   ${\bf Q}_A$ and ${\bf P}_A$ as in the  point (i), we get
\begin{align}\nonumber
a_{ij}^{c,{\dag},W} =&\sum_{t = 1}^m\sum_{s = 1}^n{\frac{{{\sum\limits_{\beta \in J_{r,n} {\left\{ {i}
\right\}}} {{\rm{cdet}} _{i} \left( {\left( {{\bf A}^{ *}
{\bf A}} \right)_{.i} \left({\bf w}^{(1)}_{.t} \right)}
\right)  _{\beta} ^{\beta} } }}}{{{\sum\limits_{\beta
\in J_{r,n}}  {{ \left| {{\bf A}^{ *}  {\bf
A}} \right|_{\beta}^{\beta} }}}} }}\times \\&
{\frac{\sum\limits_{\beta
\in J_{r_1,\,m} {\left\{ {t} \right\}}} {{\rm{cdet}} _{t} \left(
{\left( {{\bf A}{\bf W}} \right)^{k+2}_{. t} \left( {{\bf
\bar{v}}_{.s} }  \right)} \right) _{\beta}
^{\beta} }  }{\sum\limits_{\beta \in J_{r_1,m}} {{\left|
{\left( { {\bf A}{\bf W}} \right)^{k+2}}  \right|_{\beta} ^{\beta}}} }}{\frac{{{\sum\limits_{\alpha \in I_{r,m} {\left\{ {j}
\right\}}} {{{\rm{rdet}} _{j} {\left( {({\bf A} {\bf A}^{ *}
)_{j .} ({\bf w}^{(2)}  _{s  .} )}
\right)  _{\alpha} ^{\alpha} } }}}
}}{{{\sum\limits_{\alpha \in I_{r,m}} {{ {\left| {
{\bf A} {\bf A}^{ *} } \right| _{\alpha
}^{\alpha} }  }}} }}},
\label{eq:det_wcmph_s2}
\end{align}
where ${\bf w}^{(1)}_{.t}$ is the $t$-th column of ${\bf W}_1:={\bf A}^*{\bf A}{\bf W}$,  ${\bf \bar{v}}_{.s} $ is the $s$-th column of  ${\bf \bar{V}}=({\bf A}{\bf W})^{k}{\bf A} $,  and ${\bf w}^{(2)}  _{s  .}$ is the $s$-th row of ${\bf W}_2:={\bf W}{\bf A}{\bf A}^*$.
Denote $$
  { \psi}_{sj}:={\sum\limits_{\alpha \in I_{r,m} {\left\{ {j}
\right\}}} {{{\rm{rdet}} _{j} {\left( {({\bf A} {\bf A}^{ *}
)_{j .} ({\bf w}^{(2)}  _{s  .} )}
\right)  _{\alpha} ^{\alpha} } }}}
$$ and construct the matrix ${\bf { \Psi}}=(  { \psi}_{sj})$. Then, introduce
\begin{align*}
\upsilon_{tj}=&\sum_{s = 1}^n{\sum\limits_{\beta
\in J_{r_1,\,m} {\left\{ {t} \right\}}} {{\rm{cdet}} _{t} \left(
{\left( {{\bf A}{\bf W}} \right)^{k+2}_{. t} \left( {{\bf
\bar{v}}_{.s} }  \right)} \right) _{\beta}
^{\beta} }  } { \psi}_{sj}=\\=&{\sum\limits_{\beta
\in J_{r_1,\,m} {\left\{ {t} \right\}}} {{\rm{cdet}} _{t} \left(
{\left( {{\bf A}{\bf W}} \right)^{k+2}_{. t} \left({\bf { \psi}}^{(1)}_{.j}   \right)} \right) _{\beta}^{\beta} }  },
\end{align*}
where ${\bf { \psi}}^{(1)}  _{.j}$ is the $j$-th column of ${\bf { \Psi}}^{(1)}=({\bf A}{\bf W})^{k}{\bf A}{\bf { \Psi}}$ and construct the matrix $\mathbf{\Upsilon}=(\upsilon_{tj})$.
Taking into account that
\begin{align*}
\sum_{t = 1}^n{\sum\limits_{\beta \in J_{r,n} {\left\{ {i}
\right\}}} {{\rm{cdet}} _{i} \left( {\left( {{\bf A}^{ *}
{\bf A}} \right)_{.i} \left({\bf w}^{(1)}_{.t} \right)}
\right)  _{\beta} ^{\beta} } }\upsilon_{tj}={\sum\limits_{\beta \in J_{r,n} {\left\{ {i}
\right\}}} {{\rm{cdet}} _{i} \left( {\left( {{\bf A}^{ *}
{\bf A}} \right)_{.i} \left(\widetilde{{\bf \upsilon}}_{.j} \right)}
\right)  _{\beta} ^{\beta} } },
\end{align*} where $\widetilde{{\bf \upsilon}}_{.j}$ is the $j$-th column of $\widetilde{{\bf \Upsilon}}={\bf A}^{ *}{\bf A}{\bf W} {\bf \Upsilon}$,
finally from (\ref{eq:det_wcmph_s2}), it follows (\ref{eq:det_wcmph2}).
\end{proof}

\begin{cor}Let ${\bf A}\in  {\mathbb{C}}^{m\times n}_r$ and ${\bf W}\in  {\mathbb{C}}^{n\times m}$ be a nonzero matrix. Suppose  $k=\max\{\Ind ({\bf W}{\bf A}),$ $\Ind ({\bf A}{\bf W})\}$.
 Then the  determinantal representations of its WCMP inverse $ {\bf A}^{c,{\dag},W}= \left(a_{ij}^{c,{\dag},W}\right)$ can be expressed as
 
 (ii)~ if   $\rk ({\bf W}{\bf A})^k=r_1$, then
 \begin{align*}
a_{ij}^{c,{\dag},W}=& {\frac{{\sum\limits_{\alpha \in I_{r,m} {\left\{ {j}
\right\}}} {{ {\left| {({\bf A} {\bf A}^{ *}
)_{j .} (\widetilde{{\bf \omega}}  _{i  .} )}
\right|  _{\alpha} ^{\alpha} } }
}}
}{{\left({\sum\limits_{\beta \in J_{r,n}} {{ {\left| {
{\bf A}^{ *} {\bf A} } \right|_{\beta
}^{\beta} }  }}}\right)^2
{\sum\limits_{\alpha \in
I_{r_1,n}}  {{\left| {\left({\bf W} {{\rm {\bf A}} } \right)^{k+2}   } \right|_{\alpha} ^{\alpha}}}}
}}}\end{align*}
where $\widetilde{{\bf \omega}}  _{i  .}$ is the $i$-th row  of $\widetilde{{\bf \Omega}}={\bf \Omega}{\bf W}{\bf A} {\bf A}^{ *}$. The matrix ${\bf \Omega}=(\omega_{is})$ is such that
\begin{align*}
\omega_{is}=&{\sum\limits_{\alpha
\in I_{r_1,n} {\left\{ {s} \right\}}} { \left|
{({\bf W}{\rm {\bf A}} )^{ k+2}_{s.} ( {\bf \phi}^{(1)}_{i.})}
\right|_{\alpha} ^{\alpha} } }.
\end{align*}
Here  ${\bf \phi}^{(1)}_{i.}$  is the $i$-th row of ${\bf \Phi}_1={\bf\Phi}{\bf A}\left({\bf W}{\bf A}\right)^k$  and the matrix ${\bf\Phi}=({\phi}_{it})$ is such that
\begin{align*}
{\phi}_{it}:={\sum\limits_{\beta \in J_{r,n} {\left\{ {i}
\right\}}} { \left| {\left( {{\bf A}^{ *}
{\bf A}} \right)_{.i} \left({\bf w}^{(1)}_{.t} \right)}
\right| _{\beta} ^{\beta} } },
\end{align*}
where ${\bf w}^{(1)}_{.t}$ is the $t$-th column of ${\bf W}_1={\bf A}^*{\bf A}{\bf W}$.

(ii)~ if   $\rk ({\bf A}{\bf W})^k=r_1$, then
\begin{align*}
a_{ij}^{c,{\dag},W}=&
 {\frac{\sum\limits_{\beta \in J_{r,n} {\left\{ {i}
\right\}}} { \left| {\left( {{\bf A}^{ *}
{\bf A}} \right)_{.i} \left(\widetilde{{\bf \upsilon}}_{.j} \right)}
\right|  _{\beta} ^{\beta} } }{{\left({\sum\limits_{\alpha \in I_{r,m}} {{ {\left| {
{\bf A} {\bf A}^{ *} } \right|_{\alpha
}^{\alpha} }  }}}\right)^2
{\sum\limits_{\beta \in J_{r_1,m}} {{\left|
{\left( { {\bf A}{\bf W}} \right)^{k+2}}  \right|_{\beta} ^{\beta}}} }
}}}
\end{align*}
where $\widetilde{{\bf \upsilon}}_{.j}$ is the $j$-th column of $\widetilde{{\bf \Upsilon}}={\bf A}^{ *}{\bf A}{\bf W} {\bf \Upsilon}$. The matrix $\mathbf{\Upsilon}=(\upsilon_{tj})$ is determined by
\begin{align*}
\upsilon_{tj}=&{\sum\limits_{\beta
\in J_{r_1,\,m} {\left\{ {t} \right\}}} {\left|
{\left( {{\bf A}{\bf W}} \right)^{k+2}_{. t} \left({\bf { \psi}}^{(1)}_{.j}   \right)} \right| _{\beta}^{\beta} }  },
\end{align*}
 where ${\bf { \psi}}^{(1)}  _{.j}$ is the $j$-th column of ${\bf { \Psi}}^{(1)}=({\bf A}{\bf W})^{k}{\bf A}{\bf { \Psi}}$. Here ${\bf { \Psi}}=({ \psi}_{sj})$ is such that
 $$
  { \psi}_{sj}:={\sum\limits_{\alpha \in I_{r,m} {\left\{ {j}
\right\}}} {{ {\left| {({\bf A} {\bf A}^{ *}
)_{j .} ({\bf w}^{(2)}  _{s  .} )}
\right|  _{\alpha} ^{\alpha} } }}},
$$
where  ${\bf w}^{(2)}  _{s  .}$ is the $s$-th row of ${\bf W}_2={\bf W}{\bf A}{\bf A}^*$.
\end{cor}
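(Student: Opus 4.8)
The plan is to obtain the corollary as the complex specialization of Theorem~\ref{th:detrep_cmph}, the only genuinely new point being that the two Hermicity hypotheses of that theorem can be dropped over $\mathbb{C}$. First I would recall that for complex matrices every row determinant and every column determinant of a Hermitian matrix reduces to the ordinary determinant; since all the matrices whose ``determinant'' enters the formulas are Hermitian (namely $A^{*}A$, $AA^{*}$, $U^{2k+1}(U^{2k+1})^{*}$ and $(V^{2k+1})^{*}V^{2k+1}$), each occurrence of ${\rm rdet}_j$ and ${\rm cdet}_i$ may be rewritten with $|\cdot|$. This already turns the quaternion expressions into the claimed complex ones at the level of notation.

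Next I would start, exactly as in the proof of Theorem~\ref{th:detrep_cmph}, from the componentwise expansion (\ref{eq:cmp_start}) of the defining identity (\ref{eq:wcmp}), ${\bf A}^{c,\dag,W}={\bf A}^{\dag}{\bf A}{\bf W}{\bf A}^{d,W}{\bf W}{\bf A}{\bf A}^{\dag}$, and substitute the complex determinantal representations: Corollary~\ref{cor:det_repr_MP_c}(ii) and (iii) for the projectors ${\bf Q}_A$ and ${\bf P}_A$, and Corollary~\ref{theor:det_rep_wdrazc} for the weighted Drazin inverse ${\bf A}^{d,W}$. The decisive step, and the reason the Hermicity assumptions disappear, is precisely that Corollary~\ref{theor:det_rep_wdrazc} supplies a determinantal representation of the complex weighted Drazin inverse valid for an \emph{arbitrary} matrix, using the ordinary determinant of $({\bf W}{\bf A})^{k+2}$ in the case $\rk({\bf W}{\bf A})^{k}=r_1$ and of $({\bf A}{\bf W})^{k+2}$ in the case $\rk({\bf A}{\bf W})^{k}=r_1$; thus no Hermicity of ${\bf W}{\bf A}$ or ${\bf A}{\bf W}$ is needed, in contrast to the quaternion statement.

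Finally I would carry out the same telescoping as in the quaternion proof. Over $\mathbb{C}$ the collapsing identities used there (Lemmas~\ref{lem:left_rdet} and~\ref{lem:right_cdet}) are nothing but the multilinearity of the ordinary determinant in a single row or column, so the inner sums over the cofactor indices fold into single determinants and build successively the auxiliary matrices ${\bf \Phi},{\bf \Phi}_1,{\bf \Omega}$ in the first case and ${\bf \Psi},{\bf \Psi}^{(1)},{\bf \Upsilon}$ in the second. Using $\sum_{\alpha\in I_{r,m}}|{\bf A}{\bf A}^{*}|_{\alpha}^{\alpha}=\sum_{\beta\in J_{r,n}}|{\bf A}^{*}{\bf A}|_{\beta}^{\beta}$ to match the normalizing denominators, the two stated formulas follow. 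I expect no serious obstacle: the entire content is that the complex weighted Drazin representation is unconditional, while the remaining manipulations are the $\mathbb{C}$-specializations of steps already verified in Theorem~\ref{th:detrep_cmph}.
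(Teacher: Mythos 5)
Your proposal is correct and is essentially the paper's own (implicit) derivation of this corollary: re-run the proof of Theorem~\ref{th:detrep_cmph} over ${\mathbb{C}}$, substituting Corollary~\ref{cor:det_repr_MP_c} for the projectors ${\bf Q}_A$, ${\bf P}_A$ and the unconditional Corollary~\ref{theor:det_rep_wdrazc} for ${\bf A}^{d,W}$, which is precisely why the Hermicity hypotheses of the quaternion theorem disappear. One small correction to your first paragraph: the matrices $({\bf W}{\bf A})^{k+2}$ and $({\bf A}{\bf W})^{k+2}$ occurring in the corollary's formulas are \emph{not} Hermitian in general (and $U^{2k+1}(U^{2k+1})^{*}$, $(V^{2k+1})^{*}V^{2k+1}$ belong to Theorem~\ref{th:detrep_cmp}, not to this corollary); the reason every ${\rm rdet}$/${\rm cdet}$ collapses to the ordinary determinant over ${\mathbb{C}}$ is commutativity of the entries, not Hermicity.
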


Theorems \ref{th:detrep_cmp} and \ref{th:detrep_cmph} give  determinantal representations of the WCMP inverse over the quaternion skew field.
For better understanding, we present the algorithm of its finding, for example, in Theorem \ref{th:detrep_cmp} the  case (i). Other  algorithms   can be construct similarly.

\begin{alg}
\begin{enumerate}

    \item Compute the matrix $\check{ {\bf U}}=
{\bf U}^{k}({\bf U}^{ 2k+1})^{*}.$
    \item Find     $\phi_{lq}$ by (\ref{eq:phi1}) for all $l=1,\ldots,n$ and $q=1,\ldots,n$ and construct the matrix $
{\bf \Phi}=(\phi_{lq})$.
        \item Compute the matrix $
\widetilde{ {\bf \Phi}}:={\bf A}{\bf \Phi}{\bf U}^{2k}({\bf U}^{ 2k+1})^{*}$.
           \item Find     $\widehat{\phi}_{tz}$ by (\ref{eq:phih}) for all $t=1,\ldots,m$ and $z=1,\ldots,n$ and construct the matrix $\widehat{\Phi}=(\widehat{\phi}_{tz})$.
         \item Compute the matrix ${\bf \Phi}_1={\bf A}^*{\bf A}{\bf W}\widehat{\Phi}$.
 \item By   (\ref{eq:om2}), find  $\omega_{iz}$   for all $i=1,\ldots,m$ and $z=1,\ldots,n$ and construct the matrix ${\bf \Omega}=(\omega_{iz})$.

         \item Finally, find $a_{ij}^{c,{\dag},W}$  by  (\ref{eq:det_wcmp1}) for all $i=1,\ldots,m$ and $j=1,\ldots,n$ .

  \end{enumerate}
\end{alg}

\section{An example}

In this section, we give an example to illustrate our results. Given  the matrices
\begin{align}\label{eq:giv_ex}{\bf A}=\begin{bmatrix}
  0 & \mathbf{i} & 0 \\
  \mathbf{k} & 1 & \mathbf{i} \\
 1 & 0 & 0\\
  1 & -\mathbf{k} & -\mathbf{j}
\end{bmatrix},\,\, {\bf W}=\begin{bmatrix}
 \mathbf{k} & 0 & \mathbf{i} & 0 \\
 -\mathbf{j} & \mathbf{k} & 0 & 1 \\
 0 & 1 & 0 & -\mathbf{k}
\end{bmatrix}.\end{align}
Since
\begin{align*}{\bf V}=&{\bf A}{\bf W}=\begin{bmatrix}
 -\mathbf{k} & -\mathbf{j} & 0 & \mathbf{i} \\
 -1-\mathbf{j} & \mathbf{i}+\mathbf{k} & \mathbf{j} & 1+\mathbf{j} \\
 \mathbf{k} & 0 & \mathbf{i} & 0\\
 -\mathbf{i}+\mathbf{k} & 1-\mathbf{j} & \mathbf{i} & \mathbf{i}-\mathbf{k}
\end{bmatrix},~{\bf U}={\bf W}{\bf A}=\begin{bmatrix}
  \mathbf{i} & \mathbf{j} & 0 \\
  0 & \mathbf{k} & 0 \\
 0 & 0 & 0
\end{bmatrix},\\
{\bf A}^*{\bf A}=&\begin{bmatrix}
3& -2\mathbf{k} & -2\mathbf{j}  \\
2\mathbf{k} &3& 2\mathbf{i} \\
2\mathbf{j}&-2\mathbf{i} &2
\end{bmatrix},~
{\bf A}{\bf A}^*=\begin{bmatrix}
1& \mathbf{i} &0& -\mathbf{j}  \\
- \mathbf{i} &3& \mathbf{k} & 3\mathbf{k} \\
0& -\mathbf{k} & 1&1\\
\mathbf{j}&-3\mathbf{k} & 1 & 3
\end{bmatrix}
\end{align*}
and $\rk{\bf A}=3$, $\rk{\bf W}=3$, $\rk{\bf V}=3$, $\rk{\bf V}^{3}=\rk {\bf V}^{2}=2$, $\rk{\bf U}^{2}=\rk {\bf U}=2$, then $ \Ind {\bf V}=2$, $ \Ind {\bf U}=1$, and $k= {\max}\{\Ind({\bf A}{\bf W}), \Ind({\bf W}{\bf A})\}=2$.

We shall find the weighted DMP inverse due to Algorithm \ref{al1}.
\begin{enumerate}
 \item
Compute the matrix $\check{ {\bf U}}=
{\bf U}^{2}({\bf U}^{ 5})^{*}.$
Since \begin{align*}{\bf U}^{2}=&\begin{bmatrix}
  -1 & \mathbf{i}+\mathbf{k} & 0 \\
  0 & -1 &  \\
 0 & 0 & 0
\end{bmatrix},\,\,\,{\bf U}^{5}=\begin{bmatrix}
  \mathbf{i} & 2+3\mathbf{j} & 0 \\
  0 & \mathbf{k} &  \\
 0 & 0 & 0
\end{bmatrix},\end{align*}then \begin{align*}
\check{{\rm {\bf U}}}=&({\bf U}^{ 5})^{*}{\bf U}^{2}=\begin{bmatrix}
  i & 1+j & 0 \\
  -2+3j & -i+6k &  \\
 0 & 0 & 0
\end{bmatrix}\end{align*}
and $\rk {\bf U}^{2}=2.$

    \item By (\ref{eq:phidmp}) find     $\phi_{iq}$  for all $i,q=1,2,3$. So,  $
{\bf \Phi}=\begin{bmatrix}
  \mathbf{i} & -2-\mathbf{j} & 0 \\
  0 & \mathbf{k} &  0\\
 0 & 0 & 0
\end{bmatrix}.$

        \item Further,  the matrix $
\widehat{ {\bf \Phi}}:={\bf W}{\bf A}{\bf \Phi}{\bf U}^{4}({\bf U}^{ 5})^{*}=\begin{bmatrix}
  6\mathbf{i}-\mathbf{k} & 1+\mathbf{j} & 0 \\
  -2+3\mathbf{j} &\mathbf{k}  &  0\\
 0 & 0 & 0
\end{bmatrix}.$

 \item By   (\ref{eq:om1}) find  $\omega_{is}$   for all $i,s=1,2,3$. So, we have that ${\bf \Omega}={\bf \Phi}$.
\item Compute the matrix $
\widetilde{ {\bf \Omega}}:={\bf \Omega}{\bf U}^{3}{\bf A}^{*}=\begin{bmatrix}
  0 & -\mathbf{k} &  1&1\\
  -\mathbf{i} &1&0 & \mathbf{k}  \\
 0 & 0 & 0&0
\end{bmatrix}.$
         \item Finally, find $a_{ij}^{d,{\dag},W}$  by  (\ref{eq:det_wdmp}) for all $i=1,\ldots,4$ and $j=1,2,3$.
So,
 \begin{align*}a_{11}^{d,{\dag},W}=
{\frac{{\sum\limits_{\alpha \in I_{3,4} {\left\{ {1}
\right\}}} {{{\rm{rdet}} _{1} {\left( {({\bf A} {\bf A}^{ *}
)_{1 .} (\widetilde{{\bf \omega}}  _{1  .} )}
\right)  _{\alpha} ^{\alpha} } }
}}
}{{{\sum\limits_{\beta \in I_{3,4}} {{ {\left| {
{\bf A} {\bf A}^{ *} } \right|_{\alpha
}^{\alpha} }  }}}
{{\left(
{\sum\limits_{\alpha \in I_{2,3}} {{\left|  {\bf U}^{5} \left({\bf U}^{5} \right)^{*}
 \right|_{\alpha} ^{\alpha}}}}\right)^2 }},
}}}=\\=\frac{1}{2}\left({\rm{rdet}} _{1}\begin{bmatrix}
         0& -\mathbf{k}&1\\
         -\mathbf{i}&3& \mathbf{k} \\
          0 & -\mathbf{k}&1
      \end{bmatrix}+{\rm{rdet}} _{1}\begin{bmatrix}
         0&1&1\\
           0&1&1\\
          \mathbf{j}&-3\mathbf{k}&3
      \end{bmatrix}\right.\\\left.+{\rm{rdet}} _{1}\begin{bmatrix}
         0&-\mathbf{k}&1\\
           -\mathbf{i}&3&3\mathbf{k}\\
          \mathbf{j}&-3\mathbf{k}&3
      \end{bmatrix}\right)=0.\end{align*}

\end{enumerate}
Continuing similarly, we obtain
 \begin{equation}\label{eq:Af}{\bf A}^{d,{\dag},W}=\begin{bmatrix}
      0& 0&1&0 \\
        -\mathbf{i} &0&0&0 \\
      0&0&0&0\\
      \end{bmatrix}.\end{equation}
It is easy verify that ${\bf X}={\bf A}^{d,{\dag},W}$ from (\ref{eq:Af}) with the given matrices (\ref{eq:giv_ex}) is the solution to Eqs. (\ref{eq:prop_wdmp}).

\section{Conclusions}\label{sec:con}
\noindent  Notions of  the weighted core-EP right and left inverses, the weighted DMP and MPD inverses, and the weighted CMP inverse have been extended to quaternion matrices  in this paper. Due to noncommutativity of quaternions, these generalized inverses in quaternion matrices  have some features in comparison to complex matrices. We have obtained their determinantal representations within the framework of the theory of column-row determinants previously introduced by the author. As the special cases,  their determinantal representations in complex matrices have been obtained as well.


\begin{thebibliography}{40}
\bibitem{as} Aslaksen, H.: Quaternionic determinants. Math. Intellig. \textbf{18}(3), 57-65 (1996)

\bibitem{baks} Baksalary,~O.M., Trenkler,~G.: Core inverse of matrices, Linear Multilinear
Algebra \textbf{58}, 681-697 (2010)
\bibitem{baks1}Baksalary,~O.M., Trenkler,~G.: On a generalized core inverse, Appl. Math. Comput.
\textbf{236}, 450-457 (2014)


\bibitem{bap}
 Bapat,~R.B.,   Bhaskara Rao,~K.P.S.,  Manjunatha Prasad,~K.:  Generalized inverses over integral domains.
Linear Algebra Appl. \textbf{140},  181-196 (1990)

\bibitem{rao}  Bhaskara Rao,~K.P.S.: Generalized inverses of matrices over integral domains, Linear Algebra Appl. \textbf{49},  179-189 (1983)


\bibitem{chen}
 Chen, J., L.,  Zhu, H., H.,  Patri\'{c}io, P.,  Zhang, Y.L.: Characterizations and representations of core and dual core inverses. Canad. Math. Bull. \textbf{60},
269-282 (2017)

\bibitem{cl}R. E. Cline, T. N. E. Greville, A Drazin inverse for rectangular matrices, Linear Algebra Appl. \textbf{29},  53-62 (1980)

\bibitem{coh} Cohen,~N.,  De Leo, S.: The quaternionic determinant. Electron. J.
Linear Algebra \textbf{7}, 100--111 (2000)
\bibitem{gao}
 Gao,  Y.F., Chen, J.L.: Pseudo core inverses in rings with involution, Comm.
Algebra \textbf{46},  38-50 (2018)
\bibitem{gao2} Gao,  Y.F., Chen, J.L,   Patricio, P.: Representations and properties of the W-weighted core-EP inverse, Linear  Multilinear Algebra,  (2018). Doi: 10.1080/03081087.2018.1535573


\bibitem{gut}
 Guterman, A., Herrero, A.,  Thome, N.: New matrix partial order based
on spectrally orthogonal matrix
decomposition, Linear  Multilinear Algebra \textbf{64}(3), 362-374 (2016)



\bibitem{fer1}
 Ferreyra, D.E.,   Levis,   F.E., Thome, N.: Maximal classes of matrices determining generalized inverses, Appl. Math. Comput.  \textbf{333},  42-52 (2018)
\bibitem{fer2}Ferreyra, D.E.,   Levis,   F.E., Thome, N.: Revisiting the core-EP inverse and its extension to rectangular
matrices.  Quaest. Math. \textbf{41}(2),  265-281 (2018)


   \bibitem{ky}
Kyrchei,~I.:  Analogs of the adjoint matrix for generalized inverses and corresponding Cramer rules, Linear  Multilinear Algebra \textbf{56}(4),   453-469 (2008)
   \bibitem{kyr1}Kyrchei,~I.:  Explicit formulas for determinantal representations of the Drazin inverse solutions of some matrix and differential matrix equations, Appl. Math. Comput. \textbf{219},   7632-7644 (2013)


    \bibitem{kyr_nov}Kyrchei,~I.:  Cramer's rule for generalized inverse solutions. In: I. Kyrchei (Ed.), Advances in Linear Algebra Research, pp. 79--132,  Nova Sci. Publ., New York,  2015.





 \bibitem{kyr2} Kyrchei,~I.: Cramer's rule for quaternionic systems of linear equations. J. Math. Sci. \textbf{155}(6), 839--858 (2008)


 \bibitem{kyr3}    Kyrchei,~I.: The theory of the column and row determinants in a quaternion linear algebra.  In: Albert R. Baswell (Ed.), Advances in Mathematics Research 15, pp. 301--359.  Nova Sci. Publ., New York (2012)
   \bibitem{kyr}
Kyrchei,~I.:
Determinantal representations of the Moore-Penrose inverse over the quaternion skew field. J. Math. Sci. \textbf{180}(1),  23--33 (2012)
\bibitem{kyr4}    Kyrchei,~I.: Determinantal representations of the Moore-Penrose inverse over the quaternion skew field and corresponding Cramer's rules. {Linear Multilinear Algebra} \textbf{59}(4), 413-431 (2011)

 \bibitem{kyr5}Kyrchei,~I.:
Determinantal representations of the Drazin inverse over the quaternion skew field with applications to some matrix equations. Appl. Math. Comput. \textbf{238}, 193--207 (2014)

 \bibitem{kyr6}Kyrchei,~I.:
Determinantal representations of the W-weighted Drazin inverse over the quaternion skew field. Appl. Math. Comput. \textbf{264}, 453--465 (2015)

 \bibitem{kyr7}  Kyrchei,~I.:
Explicit determinantal representation formulas of W-weighted Drazin inverse solutions of some matrix equations over the quaternion skew field. Math. Probl. Eng.  \textbf{8673809},  13 p. (2016)







\bibitem{kyr9}Kyrchei,~I.:
 Determinantal representations of the Drazin and W-weighted Drazin inverses over the quaternion skew field with applications.  In: Griffin,~S. (Ed.), Quaternions: Theory and Applications, pp. 201--275. Nova Sci. Publ., New York,  (2017)




\bibitem{kyr10}Kyrchei,~I.:
 Weighted singular value decomposition and determinantal representations of the quaternion weighted Moore-Penrose inverse. Appl. Math. Comput.  \textbf{309},  1--16 (2017)

\bibitem{kyr11} Kyrchei,~I.:  Determinantal representations of the quaternion weighted Moore-Penrose inverse and its applications. In:  Baswell, A.R.(Ed.) Advances in Mathematics Research
23, pp. 35-96. Nova Science Publ., New York (2017)


 \bibitem{kyr12}Kyrchei,~I.:
 Determinantal representations of solutions to systems of quaternion matrix equations. Adv. Appl. Clifford Algebras \textbf{28}(1), 23 (2018)


 \bibitem{kyr13}   Kyrchei,~I.:
 Cramer's Rules for Sylvester quaternion
matrix equation and its special cases. Adv. Appl. Clifford Algebras \textbf{28}(5), 90 (2018)

      \bibitem{kyr14}  Kyrchei,~I.: Determinantal representations of solutions to systems of two-sided quaternion matrix equations. Linear Multilinear Algebra (2019). Doi:10.1080/03081087.2019.1614517
   \bibitem{ky_cor}  Kyrchei,~I.: Determinantal representations of the quaternion core inverse and its generalizations. Adv. Appl. Clifford Algebras \textbf{29}: 104 (2019)
 \bibitem{ky_cor1}  Kyrchei,~I.:
Determinantal representations of the core inverse and its generalizations with applications.
Journal
of Mathematics \textbf{8175935}, 6 p. (2018) 
 \bibitem{liu}
Liu, X.,  Cai, N.: High-order iterative methods for the DMP inverse. Journal of Mathematics \textbf{8175935}, 6 p. (2018)

\bibitem{ma} Ma, H., Stanimirovi\'{c},~P.S.: Characterizations, approximation and perturbations of the core-EP inverse. Appl. Math. Comput. \textbf{359}, 404-417 (2019)
\bibitem{mal1}
 Malik, S.,  Thome, N.:  On a new generalized inverse for matrices of an arbitrary index. Appl. Math. Comput. \textbf{226}, 575-580 (2014)

\bibitem{meh} Mehdipour, M., Salemi,  A.:  On a new generalized inverse of matrices. Linear Multilinear Algebra \textbf{66}(5), 1046-1053 (2018)
    \bibitem{meng} Meng, L.: The DMP Inverse for Rectangular Matrices. Filomat \textbf{31}(19), 6015-6019 (2017)

\bibitem{miel} Mielniczuk, J.: Note on the core matrix partial ordering. Discuss. Math. Probab. Stat. \textbf{31}, 71-75 (2011)

\bibitem{mos}
 Mosi\'{c}, D., Deng, C., Ma, H.: On a weighted core inverse in a ring with
involution. Comm.
Algebra \textbf{46}(6), 2332-2345 (2018)
\bibitem{mos1}
 Mosi\'{c}, D.:
Weighted core-EP inverse of an operator between Hilbert spaces. Linear Multilinear Algebra \textbf{67}(2), 278-298, (2019)
\bibitem{mos2} Mosi\'{c}, D. The CMP inverse for rectangular matrices. 	Aequationes Math. \textbf{92}, 649-659 (2018)
\bibitem{pen}
R. Penrose, A generalized inverse for matrices, Proc. Cambridge Philos. Soc. \textbf{52}, 406-413 (1955)
\bibitem{pras}
 Prasad, K.M.,  Mohana, K.S.:  Core-EP inverse. Linear Multilinear Algebra \textbf{62}(3), 792-802 (2014)

\bibitem{pras1}
 Prasad, K.M.,  Raj, M.D.:  Bordering method to compute Core-EP inverse. Spec. Matrices \textbf{6}, 193-200 (2018)



\bibitem{rak}
 Raki\'{c}, D.S., \v{C}. Din\v{c}i\'{c}, N., Djordjevi\'{c}, D.S.: Group, Moore-Penrose, core and
dual core inverse in rings with involution. Linear Algebra Appl. \textbf{463},
115-133 (2014)
\bibitem{sta}
Stanimirovi\`{c}, P.S., Katsikis, V.N., Ma, H.: Representations and properties of the W-weighted
Drazin inverse. Linear Multilinear Algebra \textbf{65}(6), 1080-1096 (2017)

\bibitem{sta1}  Stanimirovi\`{c}, P.S.: General determinantal representation of
  pseudoinverses of matrices. Mat. Vesnik. 48,  1-9
  (1996)
\bibitem{sta2}
 Stanimirovi\`{c}, P.S.,  Djordjevic,~D.S.: Full-rank and determinantal representation of the Drazin inverse, Linear Algebra Appl. \textbf{311},  131-151 (2000)


\bibitem{song1}
 Song, G.J.:  Determinantal representations of the generalized inverses $A_{T,S}^{(2)}$ over the quaternion skew field with applications.  J. Appl. Math. Comput. \textbf{39},  201-220  (2012)


\bibitem{song2} Song, G.J.: Bott-Duffin inverse over the quaternion skew field with applications. J. Appl. Math. Comput. \textbf{41}, 377-392   (2013)


 \bibitem{song5}
 Song, G.J.: Characterization of the W-weighted Drazin inverse over the quaternion
skew field with applications. Electron. J. Linear Algebra \textbf{26}, 1-14 (2013)
\bibitem{wang}
 Wang, H.X.: Core-EP decomposition and its applications. Linear Algebra
Appl. \textbf{508}, 289-300 (2016)

\bibitem{wei1}Y. Wei, Integral representation of the W-weighted Drazin inverse, Appl. Math. Comput. \textbf{144}, 3-10 (2003)
 \bibitem{wei2}Y. Wei, A characterization for the W-weighted Drazin inverse and a Cramer rule for the W-weighted Drazin inverse solution, Appl. Math. Comput. \textbf{125}, 303-310 (2002)

\bibitem{xu} Xu, S.Z., Chen, J.L., Zhang, X.X.: New characterizations for core inverses in rings with involution. Front. Math. China \textbf{12}, 231-246 (2017)








  \bibitem{zh_arx} Zhou, M., Chen, J., Li, T., Wang, D.: Three limit representations of the core-EP inverse. Filomat \textbf{32}, 5887-5894 (2018)
\bibitem{zhour} Al-Zhour, Z.,  Kilicman, A.,  Abu-Hassa, M.H.: New representations for weighted Drazin inverse of matrices. Int. J. Math. Anal. \textbf{1}(15), 697-708 (2007)
\end{thebibliography}
\end{document}